\documentclass[11pt,letterpaper]{amsart}
\usepackage{amsmath,amsthm,amssymb,mathrsfs}
\usepackage[dvipsnames]{xcolor}
\usepackage[margin=1.25in]{geometry}
\usepackage[shortlabels]{enumitem}
\usepackage{tikz,tikz-cd}
\usetikzlibrary{positioning,arrows.meta}
\usepackage{setspace}
\usepackage{etoolbox}
\usepackage{hyperref}
\usepackage{comment}
\usepackage{float}
\IfFileExists{needspace.sty}{\usepackage{needspace}}{\newcommand{\needspace}[1]{\par}}
\definecolor{edgeRed}{RGB}{214,39,40}
\definecolor{guideBlue}{RGB}{40,58,145}
\def\opp{\mathrm{opp}}
\def\ge{\geqslant}
\def\le{\leqslant}
\def\a{\alpha}
\def\b{\beta}
\def\g{\gamma}
\def\G{\Gamma}
\def\d{\delta}
\def\D{\Delta}
\def\L{\Lambda}
\def\e{\epsilon}

\def\o{\omega}

\def\th{\theta}

\def\l{\lambda}

\def\aff{\mathrm{aff}}

\def\QBG{\mathrm{QBG}}

\def\TBG{\mathrm{TBG}}
\def\tilt{\mathrm{tilt}}

\def\<{\langle}
\def\>{\rangle}

\newcommand{\fkh}{\ensuremath{\mathfrak{h}} }

\newcommand{\fkt}{\ensuremath{\mathfrak{t}} }

\newcommand{\bq}{\mathbf q}

\newcommand{\BC}{\ensuremath{\mathbb {C}} }

\newcommand{\BF}{\ensuremath{\mathbb {F}} }
\newcommand{{\BG}}{\ensuremath{\mathbb {G}} }

\newcommand{{\BK}}{\ensuremath{\mathbb {K}} }

\newcommand{\BQ}{\ensuremath{\mathbb {Q}} }
\newcommand{\BR}{\ensuremath{\mathbb {R}} }
\newcommand{\BS}{\ensuremath{\mathbb {S}} }

\newcommand{\BZ}{\ensuremath{\mathbb {Z}} }

\newcommand{\CB}{\ensuremath{\mathcal {B}} }

\newcommand{\CD}{\ensuremath{\mathcal {D}} }

\newcommand{\CI}{\ensuremath{\mathcal {I}} }

\newcommand{\CT}{\ensuremath{\mathcal {T}} }
\newcommand{\CU}{\ensuremath{\mathcal {U}} }

\newcommand{\ud}{\underline}

\newcommand{\GL}{\mathrm{GL}}

\DeclareMathOperator{\Lie}{Lie}

\DeclareMathOperator{\Spec}{Spec}

\newcommand{\wt}{\mathrm{wt}}

\def\ta{\tilde \a}

\def\tg{\tilde \g}

\newtheorem{theorem}{Theorem}

\newtheorem{proposition}[theorem]{Proposition}
\newtheorem{lemma}[theorem]{Lemma}
\newtheorem {conjecture}[theorem]{Conjecture}

\theoremstyle{definition}
\newtheorem{definition}[theorem]{Definition}
\newtheorem{example}[theorem]{Example}
\newtheorem*{example*}{Example}
\newtheorem{thmintro}{Theorem}

\newtheorem{conjintro}[thmintro]{Conjecture}

\newtheorem{remark}[theorem]{Remark}

\newtheorem*{function*}{Function}

\numberwithin{equation}{section}
\numberwithin{theorem}{section}

\setitemize[0]{leftmargin=*,itemsep=\the\smallskipamount}
\setenumerate[0]{leftmargin=*,itemsep=\the\smallskipamount}

\renewcommand{\to}{%
   \ifbool{@display}{\longrightarrow}{\rightarrow}%
   }
\let\shortmapsto\mapsto
\renewcommand{\mapsto}{%
   \ifbool{@display}{\longmapsto}{\shortmapsto}%
   }

\newif\ifshowrevisions
\showrevisionstrue

\hypersetup{hidelinks,pdftitle={Affine Realization of Tilted Richardson Varieties -- review copy},pdfauthor={Xuhua He and Qingchao Yu}}
\begin{document}

\title[]{Affine realization of tilted Richardson varieties} 

\keywords{Tilted Richardson varieties, quantum Schubert calculus,
total positivity}

\subjclass[2020]{Primary 14M15; Secondary 05E14, 14N35}

\thanks{}

\author[Xuhua He]{Xuhua He}
\address{Department of Mathematics and New Cornerstone Science Laboratory, The University of Hong Kong, Pokfulam, Hong Kong, Hong Kong SAR, China}
\email{xuhuahe@hku.hk}

\author[Qingchao Yu]{Qingchao Yu}
\address{Institute for Advanced Study, Shenzhen University, Nanshan District, Shenzhen, Guangdong, China}
\email{qingchao\_yu@outlook.com}

\begin{abstract}
Richardson varieties provide geometric representatives for products of Schubert classes, with stratifications indexed by the subintervals of Bruhat intervals. We construct analogous geometric models for tilted Bruhat intervals for all split connected reductive groups. Our construction uses Richardson varieties in affine flag varieties and yields abstract tilted Richardson varieties whose isomorphism classes, together with their labeled stratifications, are independent of the choices involved.

We obtain an explicit formula for the point-counting polynomials of their open parts in terms of increasing minimal weight paths, extending Dyer's formula for Kazhdan--Lusztig $R$-polynomials. 
Over the real numbers, their totally nonnegative parts are regular CW complexes homeomorphic to closed balls, which gives a natural geometric realization of the posets of tilted Bruhat subintervals in all Lie types. 
Over the complex numbers, we prove that their fundamental classes push forward under a continuous loop evaluation map to the Poincar\'e duals of the minimal-degree coefficients of quantum Schubert products.

\end{abstract}

\maketitle

\section*{Introduction}\label{sec:introduction}

\subsection{Richardson varieties and quantum Schubert calculus}
Schubert calculus studies intersections of Schubert varieties in the flag variety $G/B$ of a connected complex reductive group $G$. Richardson varieties arise by intersecting a Schubert variety with an opposite Schubert variety. Each nonempty Richardson variety corresponds to a Bruhat interval and decomposes into open Richardson varieties indexed by its subintervals. The closure relations among these strata are precisely the inclusion relations among the corresponding subintervals. Their cohomology classes represent products of Schubert classes. 


Quantum Schubert calculus extends this intersection theory by
incorporating counts of rational curves meeting Schubert varieties.
The quantum Chevalley formula is encoded by the quantum Bruhat graph,
whose shortest directed paths define the tilted Bruhat intervals of
Brenti--Fomin--Postnikov \cite{BFP}. These intervals are defined for
every pair $u,v\in W_0$ and recover ordinary Bruhat intervals when
$u\leq v$. This leads to a natural geometric problem: construct
analogues of Richardson varieties whose stratifications realize
tilted Bruhat intervals and whose geometry reflects quantum
Schubert multiplication.

In type $A$, Gao--Gao--Gao constructed tilted Richardson varieties
inside the complete flag variety \cite{GGG26}. They established
their irreducibility and stratification, introduced tilted
$R$-polynomials and totally nonnegative parts, and identified
the varieties with minimal-degree two-point curve neighborhoods.
Their construction uses features specific to type $A$.

\subsection{Abstract tilted Richardson varieties}

In this paper, we construct abstract tilted Richardson varieties
uniformly for all split connected reductive groups. Our principal
geometric result is that their isomorphism classes, together with
all labeled strata, are independent of the choices in the affine
construction. Over the complex numbers, we also identify the
pushforwards of their fundamental classes with the Poincar\'e duals
of the minimal-degree coefficients of quantum Schubert products.
These results connect the geometry of the models with both the
interval combinatorics and the quantum intersection theory that
motivate them.

Our approach draws on the relationship between quantum and affine
Schubert calculus developed by Peterson and Lam--Shimozono
\cite{LS10}. Building on the relation between the quantum Bruhat
graph and affine Bruhat order, we realize tilted Bruhat intervals
as affine Bruhat intervals and take the corresponding affine
Richardson varieties as geometric models. The essential geometric
step is to compare the models arising from different choices.
We construct isomorphisms compatible with every stratum labeled
by a tilted Bruhat subinterval. This gives the following result.

\begin{thmintro}
\label{thm:intro-construction}
For every pair $u,v\in W_0$, the affine construction gives a
projective irreducible variety with the following properties.
\begin{enumerate}
\item Its dimension is the length of a shortest directed path
from $u$ to $v$ in the quantum Bruhat graph, and its strata are
indexed by the subintervals of $[u,v]_{\mathrm{tilt}}$.

\item Its isomorphism class, together with the labeled
stratification, is independent of the affine realization.

\item If $u\leq v$, it is isomorphic to the ordinary Richardson
variety, compatibly with the Richardson stratification.
\end{enumerate}
\end{thmintro}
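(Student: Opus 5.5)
The plan is to realize $[u,v]_{\tilt}$ inside the affine Weyl group and then take the corresponding affine Richardson variety. Following Lam--Shimozono \cite{LS10} and the Peterson correspondence, I fix a sufficiently dominant (or antidominant) translation $t^{\lambda}$. The lifts $x=ut^{\lambda}$ and $y=vt^{\lambda+\mathrm{wt}(u\Rightarrow v)}$ should then satisfy $x\le y$ in the affine Bruhat order. Here $\mathrm{wt}$ is the weight of a shortest path in the quantum Bruhat graph, which is independent of the chosen shortest path by Postnikov's result. Two ingredients drive the whole proof. First, the length identity $\ell(y)-\ell(x)=\ell(u\Rightarrow v)$. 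Second, the fact that the map $w\mapsto wt^{\mu_w}$ (with $\mu_w$ the appropriate shifted weight) is a poset isomorphism $[u,v]_{\tilt}\to[x,y]$. Both hold for $\lambda$ deep enough in a chamber: every affine element in $[x,y]$ is then forced to have the form $wt^{\lambda+\mathrm{wt}(u\Rightarrow w)}$, and Bruhat covers in $[x,y]$ correspond to quantum Bruhat graph edges along shortest paths. I define the model as the affine Richardson variety $\mathcal R_{x,y}=\overline{IxI/I}\cap\overline{I^{-}yI/I}$ in the affine flag variety. Projectivity holds because the Schubert variety is projective. Irreducibility and the dimension $\ell(y)-\ell(x)$ follow from the standard Kumar/Kazhdan--Lusztig results on affine Richardson varieties. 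The stratification by open Richardsons $\mathcal R^{\circ}_{x',y'}$, for $x\le x'\le y'\le y$, with closure given by inclusion of intervals, then transports through the isomorphism above to the subintervals of $[u,v]_{\tilt}$. This proves (1).

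For (2), I compare two choices $\lambda$ and $\lambda'$. It suffices to treat $\lambda'=\lambda+\mu$ with $\mu$ dominant, since any two admissible choices can be joined through a common larger one. Right multiplication by $t^{\mu}$ on the affine flag variety does not preserve $I$-orbits, so instead I use a local model. The open opposite cell $I^{-}xI/I$ near $x$ meets $\overline{IyI/I}$ in a Kazhdan--Lusztig-type variety. I would show that when $\lambda$ is deep, the relevant root subgroups involved in $\mathcal R_{x,y}$ are only finitely many affine roots, and these are matched bijectively under $t^{\mu}$-conjugation with those for $(xt^{\mu},yt^{\mu})$. Concretely, I would write $\mathcal R_{x,y}$ as a closed subvariety of the finite-dimensional $\overline{IyI/I}$ and cover it by the affine charts $x'I^{-}I/I$ centred at $T$-fixed points $x'\in[x,y]$. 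In each chart, the variety should be cut out by equations involving only the coordinates $U_\beta$ with $\beta$ an affine root of the form $\alpha+k\delta$, where $k$ lies in a window determined by $\lambda$. The bijection $\alpha+k\delta\mapsto \alpha+(k+\langle\mu,\alpha\rangle)\delta$ should identify these equations, and the identifications should glue compatibly across charts, giving an isomorphism $\mathcal R_{x,y}\cong\mathcal R_{xt^{\mu},yt^{\mu}}$ that sends each $\mathcal R^{\circ}_{x',y'}$ to $\mathcal R^{\circ}_{x't^{\mu},y't^{\mu}}$. Choices of chamber, or of representatives differing by the center, are handled in the same way. This stabilization step is the main obstacle: the charts are infinite-dimensional, and I must verify that the gluing data truly stabilizes. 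If the chart argument fails, I would fall back on a global approach. This would compare the affine Richardsons via the projection to a partial affine flag variety, or via the intersection with a finite-dimensional $L^{+}G$-orbit closure in the affine Grassmannian where $t^{\mu}$ acts. In that approach I would use that the isomorphism is $T$-equivariant and bijective on fixed points, and conclude by normality (Frobenius splitting of affine Richardsons) together with Zariski's main theorem.

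For (3), when $u\le v$ the shortest path has weight $0$, so $x=ut^{\lambda}$ and $y=vt^{\lambda}$. The projection from the affine flag variety restricted to $\overline{I^{-}yI/I}\cap\overline{IxI/I}$ should then factor through a copy of $G/B$ embedded via the fixed point $t^{\lambda}$. Explicitly, the $L^{+}G$-orbit of $t^{\lambda}$ fibres over $G/B$, with the orbit $Gt^{\lambda}I/I\cong G/B$ as a section when $\lambda$ is regular dominant. I would check that $\mathcal R_{x,y}$ lies in this section and coincides there with the finite Richardson variety $R_{u,v}$, matching strata via $w\mapsto wt^{\lambda}$. Combined with (2), this gives the isomorphism for every choice.
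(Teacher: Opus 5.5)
\textbf{The gap is in (2).} Your reduction to comparing $\lambda$ with $\lambda+\mu$ is the same as the paper's: Proposition~\ref{prop:well-def} chooses $\lambda,\lambda'$ with $\mu+\lambda=\mu'+\lambda'$. You also rightly note that right multiplication by $t^{\mu}$ is not defined on $LG/\CI$. But neither of your plans then produces a morphism between the two Richardson varieties.

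The coordinate shift $\alpha+k\delta\mapsto\alpha+(k+\langle\mu,\alpha\rangle)\delta$ is conjugation by a translation. That conjugation normalizes neither $\CI$ nor $\CI^-$, so nothing forces it to carry the local equations of your $\mathcal R_{x,y}$ to those of the shifted pair. The finite-window claim and the gluing are exactly what would need proof, and you leave both open. The fallback does not repair this. $T$-equivariance and agreement on the finitely many $T$-fixed points do not give a map. Normality plus Zariski's main theorem can only upgrade a morphism you have already constructed and shown to be bijective and birational.

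\textbf{The missing idea is Theorem~\ref{thm:technical}:} the comparison map should be a projection, not a multiplication. Suppose $\ell(w''y)=\ell(w'')+\ell(y)$ for every $w''\in[w,w']$. Then $[w,w']\,y=[wy,w'y]$ by Lemma~\ref{lem:length-additive}, and $\CI\dot w''\dot y\CI/\CI\cong\CI\dot w''\CI\times^{\CI}\CI\dot y\CI/\CI$. Forgetting the second factor therefore defines $\pi:\widetilde{\CB}_{wy,w'y}\to\widetilde{\CB}_{w,w'}$.

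Bijectivity of $\pi$ on each open stratum $\widetilde{\CB}^\circ_{w_1y,w_2y}\to\widetilde{\CB}^\circ_{w_1,w_2}$ comes from the Deodhar/Marsh--Rietsch decomposition. By Lemma~\ref{lem:technical}, a distinguished subexpression for $w_1y$ in $\underline{w_2}\,\underline{y}$ is forced to end with $w_1y_1,\dots,w_1y_r$, where the $y_j$ are the prefixes of $\underline{y}$. So truncation matches the cells and their parameters. Seminormality of affine Richardson varieties then upgrades $\pi$ to an isomorphism preserving every stratum.

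The isomorphism you want between the models for $\lambda$ and $\lambda+\mu$ is the case where the right factor is a deep translation, which is length-additive on the whole interval by \eqref{eq:len2}. The paper also applies the theorem with right factor $w^{\mathrm{III}}_{\lambda,u}(x)$, using $w_{\mu,x}(z)\,w^{\mathrm{III}}_{\lambda,u}(x)=w_{\mu+\lambda,u}(z)$ for $z\in[u,v]_{\tilt}$. This removes the dependence on the base point $x$ of the tilted order, which your construction fixes at $u$.

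\textbf{The rest is sound.} Part (1) follows the paper's outline. The paper proves the interval isomorphism (Proposition~\ref{prop:realization}) from Schremmer's criterion (Theorem~\ref{thm:QBG}), by summing the two inequalities for an intermediate element $at^{-\lambda}b$.

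Your route to (3) is a valid alternative to the paper's. In the paper's normalization, $gB\mapsto gt^{-\mu}\CI$ is a closed embedding of $G/B$ that sends $\mathring{\CB}_{u',v'}$ into $\widetilde{\CB}^\circ_{u't^{-\mu},v't^{-\mu}}$. Irreducibility and equal dimensions then give equality of the closed varieties, hence of each stratum. The paper instead applies Theorem~\ref{thm:technical} with right factor $t^{-\mu}$ to reach $\widetilde{\CB}_{u,v}\subset L^+G/\CI\cong G/B$.

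Two slips to fix:
\begin{itemize}
\item Your definition $\overline{IxI/I}\cap\overline{I^-yI/I}$ is empty for $x<y$. The roles of $I$ and $I^-$ must be swapped, as you do later.
\item The sign of the weight shift must match the translation convention. In the paper's convention it is $z\mapsto zt^{-(\mu+\wt(u,z))}$ with $\mu$ dominant.
\end{itemize}
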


We write $\CT_{u,v}^{\mathrm{abs}}$ and
$\CT_{u,v}^{\mathrm{abs},\circ}$ for the isomorphism classes of
the closed variety and its open stratum. The construction is
available for split forms over arbitrary fields, so the same
models can be used for point counting over finite fields and
for total nonnegativity over the real numbers. Independence rests on an isomorphism theorem for Richardson
varieties in Kac--Moody flag varieties; see Section~\ref{sec:4}.

\subsection{$R$-polynomials and point counting}

Kazhdan--Lusztig $R$-polynomials are basic invariants of Hecke
algebras. They describe the bar involution in the standard basis
and determine the Kazhdan--Lusztig polynomials recursively
\cite{KL79}. The latter encode intersection cohomology of
Schubert varieties and play a central role in representation
theory. The $R$-polynomials themselves have a direct geometric
interpretation: their values over finite fields count the
rational points of open Richardson varieties. They therefore
connect Hecke algebra structure with the geometry of Bruhat
intervals.

Our construction gives tilted $R$-polynomials with the same
point-counting interpretation. We also obtain a tilted analogue
of Dyer's increasing-path formula \cite{Dyer93}, which expresses
ordinary $R$-polynomials in terms of paths in the Bruhat graph.

\begin{thmintro}
\label{thm:intro-R-poly}
For each $u,v\in W_0$, there is a polynomial
$R_{u,v}^{\mathrm{tilt}}$ such that, over every finite field,
\[
 \#\CT_{u,v}^{\mathrm{abs},\circ}(\BF_q)
 =R_{u,v}^{\mathrm{tilt}}(q).
\]
This polynomial admits a formula in terms of label-increasing
minimal weight paths in the tilted Bruhat graph. When $u\leq v$,
it is the ordinary $R$-polynomial, and the formula reduces to
Dyer's formula.
\end{thmintro}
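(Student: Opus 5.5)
The plan is to reduce everything to the affine setting, where Kazhdan--Lusztig theory for the affine Weyl group $\widetilde W$ is available.

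By Theorem~\ref{thm:intro-construction}, choose an affine realization of the pair $u,v$. This consists of affine Weyl group elements $x\le y$ in $\widetilde W$ with $\ell(y)-\ell(x)$ equal to the length of a shortest path from $u$ to $v$ in the quantum Bruhat graph. The realization should come with an isomorphism between $\CT_{u,v}^{\mathrm{abs},\circ}$ and the open affine Richardson variety $\mathcal R^\circ_{x,y}$. The construction works over any field, so this isomorphism holds over $\BF_q$. Deodhar's argument for Kac--Moody flag varieties then applies. One decomposes $\mathcal R^\circ_{x,y}$ using a reduced word for $y$ and proceeds by induction on $\ell(y)$, using the recursion for the affine Hecke algebra. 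This gives
\[
\#\mathcal R^\circ_{x,y}(\BF_q)=R_{x,y}(q),
\]
where $R_{x,y}$ is the Kazhdan--Lusztig $R$-polynomial of $\widetilde W$.

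We then define $R^{\mathrm{tilt}}_{u,v}:=R_{x,y}$. This is well defined because any two realizations give isomorphic open strata. Two polynomials that agree at every prime power $q$ are equal, so the definition does not depend on the realization.

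For the path formula, apply Dyer's formula in the Coxeter group $\widetilde W$. Fix a reflection order on the affine reflections. Dyer's formula writes $\widetilde R_{x,y}$, with $R_{x,y}(q)=q^{(\ell(y)-\ell(x))/2}\widetilde R_{x,y}(q^{1/2}-q^{-1/2})$, as a sum over label-increasing paths from $x$ to $y$ in the affine Bruhat graph. The key step is a dictionary between these paths and paths in the tilted Bruhat graph. Under the quantum-to-affine correspondence of Lam--Shimozono, an affine Bruhat edge $x'\to x't_{\beta+k\delta}$ inside $[x,y]$ should correspond to a tilted Bruhat graph edge $u'\to u's_\beta$. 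This edge carries a weight recording its degree, or $\delta$-component, and its length contribution.

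Paths that stay inside the affine interval should correspond exactly to minimal weight paths. The reason is that the translation part forces the total degree to equal the minimal degree $d_{\min}(u,v)$. The reflection order on affine reflections should then induce a labeling on tilted edges, under which being increasing is preserved.

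I expect this dictionary to be the main obstacle. One must check three things:
\begin{itemize}
\item every edge of the affine Bruhat graph between elements of $[x,y]$ projects to a tilted edge;
\item the projection is a bijection onto minimal weight paths;
\item a reflection order can be chosen, for example one compatible with the depth of affine roots, so that label-increasing is preserved.
\end{itemize}
I would first try to verify these using the description of $[x,y]$ as the image of $[u,v]_{\mathrm{tilt}}$ under the realization map. That description underlies the stratification statement in Theorem~\ref{thm:intro-construction}(1).

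Finally, suppose $u\le v$. Theorem~\ref{thm:intro-construction}(3) identifies the open stratum with the ordinary open Richardson variety, so point counting gives $R^{\mathrm{tilt}}_{u,v}=R_{u,v}$. In this case all minimal weight paths have degree zero, and the tilted Bruhat graph restricted to $[u,v]$ is the ordinary Bruhat graph. The formula therefore reduces to Dyer's formula for $W_0$.
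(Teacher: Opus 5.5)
Your architecture matches the paper's. You define $R^{\tilt}_{u,v}$ as the affine $R$-polynomial of $[w_{\mu,x}(u),w_{\mu,x}(v)]$, count points via the Deodhar decomposition, and get independence from the isomorphism of open strata (Proposition~\ref{prop:well-def}) plus agreement at infinitely many $q$. The case $u\le v$ is also handled as in the paper.

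The gap is the path formula. You leave it open, and the mechanism you suggest does not work as stated. Under $\Psi_{\mu,x}$, an edge $z\xrightarrow{\beta}zs_\beta$ with $z\le_x zs_\beta$ becomes an affine Bruhat edge with label
\[
\tg=x(\beta)+\bigl(\d_{\Phi^-}(z(\beta))+\<\mu+\wt(x,z),\beta\>\bigr)\d .
\]
Its $\d$-coefficient depends on the vertex $z$, not only on $\beta$. So a reflection order on $\Phi^+_{\aff}$ does not in general induce an order on $\Phi^+$. The same $\beta$ at different vertices gets different affine labels, and for a general affine order and general $\mu$, whether $\tg_{j-1}\prec_{\aff}\tg_j$ holds need not be determined by $(\beta_{j-1},\beta_j)$.

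The edge dictionary also needs more than the heuristic that the translation part forces the degree. One computes
\[
w_{\mu,x}(zs_\beta)=w_{\mu,x}(z)\,s_{x(\beta)}\,t^{-\<\mu+\wt(x,z),\beta\>x(\beta^\vee)-x(\wt(z,zs_\beta))}.
\]
This is right multiplication by an affine reflection if and only if $\wt(z,zs_\beta)\in\BZ_{\ge0}\beta^\vee$. Schremmer's bound \eqref{eq:TBG-edge-bound}, together with Lemma~\ref{lem:QBG2}(2), then forces the edge to have minimal weight. Lemma~\ref{lem:TBG} matches chains in the interval with minimal weight paths.

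The missing idea is to choose $\mu$ and both reflection orders simultaneously. The paper proceeds as follows.
\begin{enumerate}
\item Define $\preceq$ by a generic direction vector $h$: $\beta\prec\beta'$ iff $(h,\beta/\mathrm{ht}(\beta))<(h,\beta'/\mathrm{ht}(\beta'))$.
\item Take $\mu=N\rho^\vee$, so the $\d$-coefficient of the $j$-th label $\tg_{j,N}$ is $N\,\mathrm{ht}(\beta_j)+O(1)$.
\item Take an affine reflection order refining the order by $(x(h),\tg/\widetilde{\mathrm{ht}}(\tg))$.
\end{enumerate}
Then $N\widetilde{\mathrm{ht}}(\d)\,(x(h),\pi(\tg_{j,N}))\to(h,\pi(\beta_j))$ as $N\to\infty$. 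Consecutive labels of a minimal weight path are distinct, since a repeated label creates a removable loop of weight $\beta^\vee$. Hence one large even $N$ makes consecutive comparisons agree on both sides for all of the finitely many paths, and Dyer's formula transfers.

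This proves the formula only for reflection orders of this special linear form. For an arbitrary $\preceq$ you need an ingredient you do not mention: the right-hand side of \eqref{eq:R-poly} is independent of the reflection order. The paper takes this from Schremmer's double Bruhat graph theory and the quantum Yang--Baxter argument of Brenti--Fomin--Postnikov.
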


The precise path formula is given in Theorem~\ref{thm:R-poly}.
It computes the point counts of our affine models using a finite
graph determined by the root system. The proof transports
Dyer's formula for affine $R$-polynomials through the affine
realization. The key step is to identify the relevant paths and
construct compatible reflection orders, so that increasing paths
on the two sides correspond.

In type $A$, we conjecture that our tilted $R$-polynomials
coincide with those of Gao--Gao--Gao; see
Conjecture~\ref{conj:R-polynomial-comparison}.

\subsection{Total positivity and geometric realization}
\label{sec:intro-total-positivity}

Lusztig described totally nonnegative flag varieties as
``remarkable polyhedral spaces.'' Understanding this geometry
requires control of both the cells and their boundaries.
Rietsch proved that the intersections with open Richardson
varieties are cells and determined their closure relations
\cite{Riet99,Riet06}. Galashin--Karp--Lam established the
regularity theorem: these cell closures are closed balls and
form a regular CW complex \cite{GKL22}. This gives natural
geometric realizations of the posets of Bruhat subintervals,
in the spirit of Bj\"orner's geometric realization question
\cite{Bjor84}.

Passing to affine flag varieties brings additional difficulties.
The ambient space is infinite-dimensional, and the symmetry
between opposite Borel subgroups used in the finite case is
no longer available. The joint work of Bao and the first-named author \cite{BH24} developed product structures for
totally positive Richardson varieties and established regularity
for Kac--Moody flag varieties. This provides
the topological input for our affine models.

Gao--Gao--Gao proposed a counterpart of Bj\"orner's question for
tilted Bruhat intervals through their totally nonnegative tilted
Richardson varieties in type $A$ \cite{GGG26}. They established
a CW structure; the expected closure relations and regularity
remain conjectural. For our affine models, we prove that the
isomorphisms comparing different realizations preserve every
labeled positive cell. Combining this compatibility with the
regularity theorem of the totally nonnegative affine flag varieties gives the following result.

\begin{thmintro}
\label{thm:intro-TNN}
The totally nonnegative part of an abstract tilted Richardson
variety is a regular CW complex homeomorphic to a closed ball.
Its nonempty cells are indexed by the subintervals of the
corresponding tilted Bruhat interval, with closure relations
given by inclusion. 
\end{thmintro}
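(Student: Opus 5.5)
The argument transports the regularity theorem for totally nonnegative Richardson varieties in Kac--Moody flag varieties through the affine realization. Fix $u,v\in W_0$ and an affine realization: a pair $x\le y$ in the affine Weyl group $\tilde W$ whose Bruhat interval $[x,y]$ is identified, as a poset, with the tilted interval $[u,v]_{\tilt}$. The identification is the one from Lam--Shimozono's comparison of the quantum Bruhat graph with affine Bruhat order, as used in the construction behind Theorem~\ref{thm:intro-construction}. By definition, the abstract tilted Richardson variety is represented by the affine Richardson variety $\mathcal R_{x,y}=\overline{\mathcal R^\circ_{x,y}}$ in the affine flag variety. Its strata are the open Richardson varieties $\mathcal R^\circ_{x',y'}$ for $x\le x'\le y'\le y$, and these are labeled by the corresponding tilted subintervals. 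The totally nonnegative part is defined as $\mathcal R_{x,y}^{\ge0}$, the closure of the positive part $\mathcal R^{>0}_{x,y}$ in the real points. So the plan has two steps: prove the statement for one fixed realization, then check that it does not depend on that choice.

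For a fixed realization I would invoke the regularity theorem of Bao--He \cite{BH24} for Kac--Moody flag varieties, specialized to the affine flag variety. It says three things:
\begin{enumerate}
\item each $\mathcal R^{>0}_{x',y'}$ is a cell of dimension $\ell(y')-\ell(x')$;
\item $\mathcal R^{\ge0}_{x,y}=\bigsqcup_{x\le x'\le y'\le y}\mathcal R^{>0}_{x',y'}$;
\item the closure of $\mathcal R^{>0}_{x',y'}$ is the union of the $\mathcal R^{>0}_{x'',y''}$ with $[x'',y'']\subseteq[x',y']$, and the resulting CW complex is regular and homeomorphic to a closed ball.
\end{enumerate}
All cells are nonempty because the affine Bruhat interval is nonempty. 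Composing with the poset identification $[x,y]\cong[u,v]_{\tilt}$ then indexes the cells by tilted subintervals, with closure relations given by inclusion. Dimensions agree with part (1) of Theorem~\ref{thm:intro-construction}, since $\ell(y)-\ell(x)$ equals the minimal length of a path from $u$ to $v$ in the quantum Bruhat graph.

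The remaining step, which I expect to be the main obstacle, is independence of the realization. The topological structure has to be an invariant of $\CT^{\mathrm{abs}}_{u,v}$ itself, not merely of one model. I would use the isomorphisms between different realizations from Section~\ref{sec:4}, which respect labeled strata, and prove that they restrict to homeomorphisms $\mathcal R^{\ge0}_{x,y}\to\mathcal R^{\ge0}_{x_1,y_1}$ carrying each labeled positive cell onto the positive cell with the same label. The isomorphisms are assembled from elementary moves: translation by elements of the affine Weyl group, or its extended version, acting through representatives in the normalizer of the torus, and Deodhar/product-structure type isomorphisms $\mathcal R_{x,y}\cong\mathcal R_{sx,sy}$ when $s$ is a simple reflection with $sx<x$ and $sy<y$, or the analogous right-handed move.

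So I would show that each elementary move sends $\mathcal R^{>0}$ to $\mathcal R^{>0}$. For torus-normalizer translations this requires choosing representatives $\dot s$ in the positive part of the group (Lusztig's sign conventions) and using that $U^{>0}$-parametrizations of positive cells are preserved. For the Deodhar-type moves I would use the Marsh--Rietsch parametrization of $\mathcal R^{>0}_{x,y}$ via positive distinguished subexpressions. Moving a simple reflection across changes the parametrization only by the positive map $t\mapsto y_s(t)$, so positivity is preserved.

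Once each labeled open positive cell is mapped onto the corresponding one, continuity gives the statement for closures. Hence the regular CW structure, its cell labels and the ball homeomorphism all descend to $\CT^{\mathrm{abs}}_{u,v}$. Finally, when $u\le v$, part (3) of Theorem~\ref{thm:intro-construction} together with the same positivity check recovers the Galashin--Karp--Lam result for ordinary Richardson varieties, which is a consistency check on the argument.
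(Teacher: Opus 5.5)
Your proposal follows the paper's proof: fix one realization, combine the poset isomorphism of Proposition~\ref{prop:realization} with the Bao--He regularity theorem (Theorem~\ref{thm:affine-TNN}), and obtain independence by using positive distinguished subexpressions to check that the comparison isomorphisms of Proposition~\ref{prop:well-def} carry each labeled positive cell, and hence its closure, onto the corresponding one (Proposition~\ref{prop:TNN-compatible}). One correction: those comparison isomorphisms are composites of the right-multiplication projections $\pi_{w,w',y}$ of Theorem~\ref{thm:technical} and their inverses only, where the letters of $\underline{y}$ sit at forced $J^+$ positions, so the projection forgets the factors $\dot s_{i_j}$ and leaves the positive parameters unchanged rather than introducing any $y_s(t)$. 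Consequently the left-handed moves and torus-normalizer translations you anticipate never occur, which matters, because left multiplication by $\dot w$ with $w\neq 1$ does not in general send Richardson varieties to Richardson varieties, nor $\widetilde{\CB}_{\geq 0}$ into itself, so drop that part of the sketch.
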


Thus the geometry of Kac--Moody flag varieties provides uniform
realizations of the posets of tilted Bruhat subintervals in all
Lie types. It supplies geometric models for a problem formulated
entirely in terms of finite Weyl groups.

\subsection{Quantum Schubert calculus}
The cohomology class of an ordinary Richardson variety is a
product of Schubert classes. Our next result extends this
relation to the minimal-degree coefficients of quantum
Schubert products. These minimal degrees are determined by
the quantum Bruhat graph, by a theorem of Postnikov
\cite{Pos05}.

To compare our affine models with classes on the finite flag
variety, we work over $\BC$ and use a continuous map to $G/B$
obtained from loop evaluation and a Weyl group action.
Let $\sigma^w$ denote the class of the opposite Schubert variety,
let $w_0$ be the longest element of $W_0$, and write $\star$
for the quantum product.

\begin{thmintro}
\label{thm:intro-quantum-class}
Under this evaluation map, the fundamental class of an affine
model for $\CT_{u,v}^{\mathrm{abs}}$ pushes forward to the
Poincar\'e dual of the minimal-degree coefficient of
\[
 \sigma^u\star\sigma^{w_0v}.
\]
In particular, the pushforward class is independent of the
choices in the affine realization.
\end{thmintro}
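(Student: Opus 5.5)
The plan is to reduce the statement to the Peterson--Lam--Shimozono comparison between quantum and affine (homology) Schubert calculus, and then compute the pushforward stratum by stratum. Fix an affine realization: elements $x\le y$ in the affine Weyl group, with $x = u t_{\lambda}$ and $y = v t_{\mu}$ for suitably dominant translations, such that $[x,y]$ is isomorphic to $[u,v]_{\tilt}$. The affine model is the Richardson variety $X_x^y$ in the affine flag variety $\mathcal{F}l$. The evaluation map is the composite
\[
\mathcal{F}l \simeq \Omega K \times K/T \to K/T \simeq G/B,
\]
obtained from the homeomorphism of the affine flag variety with based loops times flags, evaluation of the loop at a point, and the twist by the Weyl group. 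It is continuous but not algebraic, so we will work in homology throughout.

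\textbf{Step 1 (reduction to a curve-neighborhood statement).} By Postnikov's theorem, the minimal degree $d_{\min}$ in $\sigma^u\star\sigma^{w_0v}$ is the weight of a shortest path from $u$ to $v$ in $\QBG$. The coefficient of $q^{d_{\min}}$ is then the class
\[
\sum_w \langle \sigma^u,\sigma^{w_0v},\sigma_w\rangle_{d_{\min}}\,\sigma^w .
\]
At minimal degree the $3$-point Gromov--Witten invariants are computed by two-point curve neighborhoods: the relevant moduli space maps birationally onto its image, namely the locus swept by degree-$d_{\min}$ curves meeting $X^u$ and $X_{v}$, by the minimality results of Buch--Mihalcea type. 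So the target class equals $[\Gamma_{d_{\min}}(X^u, X_v)]$, the fundamental class of the locus of points on minimal-degree rational curves joining the two Schubert varieties. We therefore need to prove $\mathrm{ev}_*[X_x^y] = [\Gamma_{d_{\min}}(X^u, X_v)]$.

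\textbf{Step 2 (affine side).} We write $[X_x^y]$ in the homology of $\mathcal{F}l$. Since the variety is a Richardson variety, we expand its class in the Schubert basis via equivariant localization or the affine Chevalley/product structure. Pushing forward to $\Omega K\times K/T$ and then to $K/T$ kills every component except the classes of the form (point in $\Omega K$)$\times$(class in $K/T$). Under Peterson's isomorphism, the coefficient of a translation-twisted class $\xi_{w t_\nu}$ corresponds to $q^{\nu}\sigma_w$. Taking $\lambda,\mu$ dominant enough puts everything in the stable range, where Lam--Shimozono identify affine structure constants with quantum ones. Degree bookkeeping then forces $\nu = d_{\min}$. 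The remaining coefficients are the minimal-degree Gromov--Witten invariants, up to the $w_0$-twist, which accounts for the Weyl group action built into the evaluation map.

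\textbf{Step 3 (independence).} Independence of the choices follows immediately, because the right-hand side does not depend on the affine realization. As a consistency check, we will also verify it directly using the isomorphism theorem of Section~\ref{sec:4}: the comparison isomorphisms should commute with the evaluation maps up to homotopy, being induced by translations in $\Omega K$, which are homotopic to the identity.

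\textbf{Main obstacle.} The hard part is Step 2. Peterson's isomorphism is stated for the affine Grassmannian and homology of $\Omega K$, while $[X_x^y]$ lives in the affine flag variety. Moreover, a Richardson class there is a product of a Schubert class and an opposite Schubert class, and opposite classes are not naturally homology classes in the ind-variety. My first approach would be to realize $X_x^y$ as a fiber product over a finite-dimensional piece, $\overline{I y I/I}\cap \overline{I^- x I/I}$, and then use the known fact that the Richardson variety has Bott--Samelson-type resolutions compatible with the $\Omega K\times K/T$ decomposition. I would then compute the pushforward by localization at $T$-fixed points $w t_\nu$. If this fails, the fallback is to degenerate via the $\BC^*$-action of loop rotation onto the minimal-degree curve neighborhood directly. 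Each $T$-stable curve in $X_x^y$ would be matched with a $T$-stable curve in $G/B$ along a $\QBG$ edge, and the two classes would be compared through their equal dimensions $\ell(y)-\ell(x)$ together with injectivity of evaluation on the open stratum.
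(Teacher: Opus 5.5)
\textbf{There is a genuine gap.} Step 2 is where the theorem actually lives, and it is asserted rather than argued. The tool you invoke there also does not fit the problem.

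Peterson's isomorphism relates the (localized, equivariant) Pontryagin ring $H_*(\mathrm{Gr}_G)$ to $QH^*(G/B)$. It says nothing directly about the pushforward of a Richardson class in the affine \emph{flag} variety under loop evaluation. Under $\widetilde{\CB}^{\mathrm{an}}\simeq\Omega K\times K/T_c$, the pushforward keeps only the K\"unneth component with an $H_0(\Omega K)$ factor. The affine Grassmannian classes that carry translation, hence $q$-degree, information in Peterson's dictionary have positive degree in $H_*(\Omega K)$. They are exactly what the pushforward discards. So ``degree bookkeeping forces $\nu=d_{\min}$'' has no content. Nothing in your sketch explains how degree-$\wt(u,v)$ Gromov--Witten invariants arise from a map to $H_*(\CB)$ that sees no $q$-grading.

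The fallback does not close the gap either. Injectivity, or even birationality, of evaluation on the open stratum is not known; it is essentially the content of the paper's open Conjecture~\ref{conj:realization-in-flag}. Even granting it, equal dimensions plus injectivity would not identify the image with the curve neighborhood. Step 1 adds a further unproved input: birationality of $\operatorname{ev}_3$ onto the minimal-degree two-point curve neighborhood in all Lie types. Nothing you cite supplies this, and the proof does not need it.

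\textbf{The missing idea} is to test both sides against monomials in divisor classes. Since $H^*(\CB,\BQ)$ is generated by the $D_\l$ and $H_*(\CB,\BZ)$ is torsion-free, it suffices to prove
$$\int_{\CT_{u,v}^{\mu,x}}e_x^*F=\int_{\CB}F\cdot[q^{\wt(u,v)}](\sigma^u\star\sigma^{w_0v})$$
for $F=D_{\l_1}\cdots D_{\l_m}$ with $m=\ell(u,v)$.

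\emph{Affine side.} We have $e_x^*D_\l=D^{\aff}_{x(\l)}$, combining $\operatorname{ev}_1^*D_\l=D^{\aff}_\l$ from Lam--Lee--Shimozono with $R_x^*D_\l=D_{x(\l)}$. The iterated affine Chevalley formula then writes the left side as a sum over maximal chains in $[w_{\mu,x}(u),w_{\mu,x}(v)]$ of $\prod_j\<\tg_j^\vee,\operatorname{af}(x(\l_j))\>$.

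\emph{Quantum side.} The Frobenius identity, together with Postnikov's theorem, turns the right side into $[q^{\wt(u,v)}\sigma^v](\sigma^u\star D_{\l_1}\star\cdots\star D_{\l_m})$. Postnikov's theorem is what kills the positive-degree corrections in $D_{\l_1}\star\cdots\star D_{\l_m}$. The iterated quantum Chevalley formula expands this coefficient as a sum over shortest $\QBG$ paths from $u$ to $v$ of $\prod_j\<\b_j^\vee,\l_j\>$.

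\emph{Matching.} Proposition~\ref{prop:path-corr} puts chains and paths in bijection, and the identity $\<\tg^\vee,\operatorname{af}(x(\l))\>=\<\b^\vee,\l\>$ matches the terms.

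\textbf{A smaller point.} Your ``consistency check'' misdescribes the comparison maps of Section~\ref{sec:4}. They are the projections $\pi_{w,w',y}$, not translations in $\Omega K$, and their compatibility with evaluation is not established. Independence of the class comes only from the formula itself, as your first sentence of Step 3 correctly says.
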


The precise equality in integral homology is given in
Theorem~\ref{thm:quantum-class}. Its proof compares the affine
and quantum Chevalley formulas along corresponding Bruhat
chains, using Postnikov's minimal-degree theorem to extract
the relevant quantum coefficient.

\subsection{Realization in the finite flag variety} 

The evaluation map also produces compact subsets of the finite
flag variety. In type $A$, our fundamental-class formula agrees
with the cohomology-class formula for the tilted Richardson
varieties of Gao--Gao--Gao \cite{GGG26}. This agreement motivates
the following geometric comparison.

\begin{conjintro}
\label{conj:intro-realization-in-flag}
For fixed $u,v$, the evaluation images of the affine models
coincide and form a closed algebraic subvariety of $G/B$.
In type $A$, this subvariety is the tilted Richardson variety
of Gao--Gao--Gao.
\end{conjintro}

\medskip\noindent\textbf{Organization of the paper.}
Section~\ref{sec:preliminary} recalls the necessary background.
Section~\ref{sec:3} realizes tilted Bruhat intervals as affine
Bruhat intervals, and Section~\ref{sec:4} constructs the abstract
tilted Richardson varieties and proves independence of the
realization. Sections~\ref{sec:R-polynomials} and \ref{sec:TNN}
establish the path formula and the regularity theorem.
Section~\ref{sec:quantum-cohomology} proves the fundamental-class
formula and discusses the conjectural realization in the finite
flag variety.

\medskip

{\bf Acknowledgements: } XH is partially supported by the New Cornerstone Science Foundation through the New Cornerstone Investigator Program and the Xplorer Prize, and by Hong Kong RGC grant 14300123. QY is partially supported by the National Natural Science Foundation of China (grant no. 12501018). We thank Thomas Lam and Lauren Williams for suggesting the problem. This manuscript was prepared with the assistance of AI tools to improve linguistic readability.

\section{Preliminaries}\label{sec:preliminary}

\subsection{Reductive groups}\label{sec:group-conventions}

Fix a split, simple, and simply connected Chevalley group scheme $\mathbf G_{\mathbb Z}$ together with a pinning $(\mathbf T_{\mathbb Z},\mathbf B^+_{\mathbb Z},\mathbf B^-_{\mathbb Z},(x_{i,\mathbb Z},y_{i,\mathbb Z})_{i\in I})$, where $I$ is the index set of the simple roots. Let $k$ be an arbitrary field. Set
$G_k=\mathbf G_{\mathbb Z}\times_{\Spec\mathbb Z}\Spec k$. Define $T_k$, $B_k^+$, $B_k^-$, $x_{i,k}$ and $y_{i,k}$ similarly by base change. 
Once the ground field is understood, we write $G=G_k$, $T=T_k$, $B=B_k^+$, $B^-=B_k^-$, $x_i=x_{i,k}$ and $y_i=y_{i,k}$. Unless explicitly stated otherwise, all group schemes, ind-schemes, varieties and morphisms are understood to be defined over $k$. For a field extension $K/k$ and a $k$-scheme $X$, we write $X_K=X\times_kK$. We use $G(k)$ for the group of $k$-rational points.

Let $U^+$ and $U^-$ denote the unipotent radicals of $B$ and $B^-$, respectively. Let $X^*(T)=\operatorname{Hom}(T,\mathbb G_m)$ denote the character lattice of $T$. Let $X_*(T)=\operatorname{Hom}(\mathbb G_m,T)$ denote the cocharacter lattice of $T$. These lattices are equipped with the natural perfect pairing $\langle\cdot,\cdot\rangle\colon X_*(T)\times X^*(T)\to\mathbb Z$.

Let $\Phi\subset X^*(T)$ be the root system of $G$ with respect to $T$, and let $\Phi^\vee\subset X_*(T)$ be the dual root system of coroots. The choice of the standard Borel subgroup $B$ determines a canonical set of positive roots $\Phi^+\subset\Phi$; set $\Phi^-=-\Phi^+$. The set of simple roots is $\D_0=\{\a_i\mid i\in I\}$. For each $\a\in\Phi$, denote by $\a^\vee$ the corresponding coroot. Let $\rho=\frac{1}{2}\sum_{\alpha\in\Phi^+}\alpha$ be the half-sum of the positive roots.

Let $W_0=N_G(T)/T$ be the finite Weyl group of $G$. For each $i\in I$, denote by $s_i=s_{\a_i}$ the corresponding simple reflection in $W_0$. The set of simple reflections $\BS_0=\{s_i\mid i\in I\}$ generates $W_0$, equipping $(W_0,\BS_0)$ with the structure of a finite Coxeter system. We denote the standard Coxeter length function by $\ell\colon W_0\to\mathbb Z_{\geq 0}$ and the Bruhat order on $W_0$ by $\leq$. The unique longest element of $W_0$ is denoted by $w_0$.

We use the dominance order on $Q^\vee=\BZ\Phi^\vee$: for $\lambda,\lambda'\in Q^\vee$, we write $\lambda\leq\lambda'$ if $\lambda'-\lambda$ is a nonnegative integral combination of positive coroots.

For $i\in I$, set $\dot s_i=x_i(-1)y_i(1)x_i(-1)\in G(k)$. For any $w\in W_0$, choose a \emph{reduced expression} $w=s_{i_1}s_{i_2}\cdots s_{i_n}$. We define the lift $\dot w\in G(k)$ by $\dot w=\dot s_{i_1}\dot s_{i_2}\cdots\dot s_{i_n}$. This definition is independent of the choice of reduced expression, and it provides the canonical representative lift $\dot w\in N_G(T)(k)$ used in \S\ref{sec:flag-richardson}.

The algebraic construction is carried out for split simple, simply connected groups over an arbitrary field. One may deduce the results for arbitrary split connected reductive groups by a standard procedure. For total nonnegativity we use the induced pinnings on split real forms.

\subsection{Flag varieties and Richardson varieties}\label{sec:flag-richardson}
Let $\CB = G/B$ be the flag variety. All intersections in this subsection are scheme-theoretic. The flag variety decomposes into a disjoint union of $B$-orbits, called \emph{Schubert cells}:
\[ \mathcal{B} = \bigsqcup_{w \in W_0} \mathring{\mathcal{B}}_w, \quad \text{where} \quad \mathring{\mathcal{B}}_w := B \dot w B/B. \] 
Each Schubert cell $\mathring{\mathcal{B}}_w$ is isomorphic to the affine space $\mathbb A_k^{\ell(w)}$. The Zariski closure of a Schubert cell is the \emph{Schubert variety}, denoted by $\mathcal{B}_w = \overline{\mathring{\mathcal{B}}_w}$. The classical Bruhat order geometrically encodes the topological closure relations among these varieties: $\mathcal{B}_v \subseteq \mathcal{B}_w$ if and only if $v \le w$ in $W_0$. Thus, $\mathcal{B}_w = \bigsqcup_{v \le w} \mathring{\mathcal{B}}_v$.

We consider the orbits of the opposite Borel subgroup $B^-$ acting on $\mathcal{B}$. For any $u \in W_0$, the \emph{opposite Schubert cell} is defined as:
\[ \mathring{\mathcal{B}}^u := B^- \dot u B/B. \]
This cell is isomorphic to an affine space of codimension $\ell(u)$ in $\mathcal{B}$. Its Zariski closure is the \emph{opposite Schubert variety} $\mathcal{B}^u = \overline{\mathring{\mathcal{B}}^u}$. The closure relations for opposite Schubert varieties are the reverse of those of Schubert varieties: $\mathcal{B}^w \subseteq \mathcal{B}^u$ if and only if $u \le w$. 

By intersecting these dual stratifications, one obtains Richardson varieties. For any pair of elements $u, v \in W_0$, the \emph{open Richardson variety} is defined as the transverse intersection of an opposite Schubert cell and a Schubert cell:
\[ \mathring{\mathcal{B}}_{u,v} := \mathring{\mathcal{B}}^u \cap \mathring{\mathcal{B}}_v. \]
Note that $\mathring{\mathcal{B}}_{u,v}$ is non-empty if and only if $u \leq v$ (cf. \cite{KL79}). In this case, since the $B$-orbits and $B^-$-orbits intersect transversally, the open Richardson variety $\mathring{\mathcal{B}}_{u,v}$ is a smooth, locally closed, irreducible subvariety of $\mathcal{B}$ (cf. \cite{Ric92}). Its Zariski closure in $\mathcal{B}$ is the \emph{closed Richardson variety}:
\[ \mathcal{B}_{u,v} := \mathcal{B}^u \cap \mathcal{B}_v. \]

The geometric properties of these varieties reflect the combinatorial structure of the Bruhat interval $[u,v]$. When $u\le v$, $\mathcal{B}_{u,v}$ is an irreducible algebraic variety of pure dimension $\ell(v) - \ell(u)$ and $$\CB_{u, v}=\bigsqcup_{u \le x \le y \le v} \mathring{\mathcal{B}}_{x, y}$$
(cf. \cite[Theorem 3.2]{Ric92}). 


\subsection{Quantum Bruhat graph and tilted Bruhat intervals}
\label{sec:2.4}
While classical Schubert calculus is governed by the Bruhat order, extending this intersection theory to the quantum cohomology ring requires a richer combinatorial structure that accounts for rational curves of strictly positive degree. The quantum Bruhat graph, introduced by Fomin, Gelfand and Postnikov in \cite{FGP97}, encodes the quantum Chevalley formula. Set $\rho = \frac{1}{2}\sum_{\a\in \Phi^+} \a$. Let $w_0$ be the longest element in $W_0$. The quantum Bruhat graph $\QBG(\Phi)$ is a directed graph with:
\begin{enumerate}
    \item Vertices: Elements of $W_0$.
    \item Edges: 
\begin{enumerate}[(i)]
    \item Bruhat edges: $w \rightharpoonup w s_\alpha$ when $\ell(w s_\alpha) = \ell(w) + 1$, $\a \in \Phi^+$.
    \item Quantum edges: $w \rightharpoondown w s_\alpha$ when $\ell(w s_\alpha) = \ell(w) + 1 - \langle \alpha^\vee, 2\rho \rangle$, $\a\in\Phi^+$, and $\a^{\vee}$ is the dual coroot to $\a$.
\end{enumerate}
     \item Weights: A Bruhat edge $w \rightharpoonup w s_\alpha$ has weight 0; a quantum edge $w \rightharpoondown w s_\alpha$ has weight $\alpha^\vee$.
\end{enumerate}

\begin{example}
In the case of type $A_2$, the quantum Bruhat graph has six vertices.
See Figure~\ref{fig:1}.

\begin{figure}[H]
\centering
\def\seshift{0.5ex}
\begin{tikzcd}[ampersand replacement=\&,column sep=2em]
  \&
  s_1s_2s_1
    \ar[ddd]
    \ar[dl,shift right=\seshift]
    \ar[dr,shift left=\seshift]
  \\
  s_1s_2
    \ar[ur,shift right=\seshift]
    \ar[d,shift right=\seshift]
  \& \&
  s_2s_1
    \ar[ul,shift left=\seshift]
    \ar[d,shift left=\seshift]
  \\
  s_1
    \ar[u,shift right=\seshift]
    \ar[urr]
    \ar[dr,shift right=\seshift]
  \& \&
  s_2
    \ar[u,shift left=\seshift]
    \ar[ull]
    \ar[dl,shift left=\seshift]
  \\
  \&
  1
    \ar[ru,shift left=\seshift]
    \ar[lu,shift right=\seshift]
\end{tikzcd}
\caption{Quantum Bruhat Graph for $A_2$}
\label{fig:1}
\end{figure}
\end{example}

For any $u \in W_0$ and any simple reflection $s_\a \in \BS_0$, it is clear that $u \to us_\a$ is a (Bruhat or quantum) edge of $\QBG(\Phi)$. Then, one can easily deduce that $\QBG(\Phi)$ is path-connected. For $u,v\in W_0$, the length of any shortest path from $u$ to $v$ is denoted by $\ell_{\QBG}(u,v)$, or simply $\ell(u,v)$. The {\it weight} of a path $p$ is the sum of the weights of its edges, denoted by $\wt(p)$. 

The following results are due to Postnikov \cite[Lemma 1]{Pos05} and Mili\'cevi\'c--Viehmann \cite[(4.3)]{MV20}. 
\begin{lemma}\label{lem:QBG}
Let $u,v\in W_0$. Then 
\begin{enumerate}
    \item All shortest paths from $u$ to $v$ in $\mathrm{QBG}(\Phi)$ have the same weight. We denote this weight by $\wt_{\QBG}(u, v)$, or simply $\wt(u, v)$.
    \item Any path from $u$ to $v$ has weight $\ge \wt(u,v)$; the equality holds if and only if the path is shortest.
\end{enumerate}
\end{lemma}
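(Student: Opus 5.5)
Two routes are available. The first follows Postnikov: use the quantum Chevalley formula and quantum cohomology. The second goes through the affine Weyl group and the ``length'' of affine elements. I plan the second, since it fits the paper's affine viewpoint and is self-contained.

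\emph{Step 1: a numerical invariant.} For an edge $w\to ws_\alpha$, set $d=\ell(ws_\alpha)-\ell(w)$. A Bruhat edge has $d=1$. A quantum edge has $d=1-\langle\alpha^\vee,2\rho\rangle$, so $\langle \wt,2\rho\rangle = 1-d$, since its weight is $\alpha^\vee$. In both cases every edge satisfies
\[
\langle \wt(e),2\rho\rangle = 1-\bigl(\ell(\mathrm{head})-\ell(\mathrm{tail})\bigr).
\]
Summing over a path $p$ from $u$ to $v$ gives $\langle \wt(p),2\rho\rangle=|p|-\ell(v)+\ell(u)$. Consequently, paths of equal length have equal pairing $\langle\wt(p),2\rho\rangle$, and a shorter path has a smaller pairing. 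This is a scalar shadow of both statements. Upgrading it to an equality in $Q^\vee$ and a dominance inequality is the real content.

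\emph{Step 2: affine translation.} Encode a path as a chain in the affine Weyl group $W_0\ltimes Q^\vee$. Lift $w$ to $w t^{\lambda}$, where $\lambda$ is the accumulated weight. A Bruhat edge is multiplication by the affine reflection $s_\alpha$. A quantum edge is multiplication by $s_\alpha t^{\alpha^\vee}$, which is an affine reflection $s_{\alpha+\delta}$ up to conventions. The key lemma (Lam--Shimozono) is that, after twisting by a sufficiently antidominant translation $t^{\mu}$, each edge becomes a length-one Bruhat cover in the affine Bruhat order. Conversely, every affine cover between elements in the relevant ``alcove region'' comes from an edge. Under this translation, a path of length $m$ from $u$ to $v$ with weight $\lambda$ becomes a saturated chain from $ut^{\mu}$ to $vt^{\mu+\lambda}$ of length $m$. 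Here $m$ equals the difference of affine lengths, which is computable from $\lambda$ via the formula for $\ell(wt^\nu)$ in the antidominant regime.

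\emph{Step 3: conclude.} Given two paths with weights $\lambda,\lambda'$, the affine length formula reads $\ell(vt^{\mu+\lambda})-\ell(ut^{\mu}) = \langle\lambda,2\rho\rangle+\ell(v)-\ell(u)$, consistent with Step 1. To get part (2), I show that if a weight $\lambda$ is attained, then for every $\lambda''\geq\lambda$ the element $vt^{\mu+\lambda''}$ lies above $ut^{\mu}$ in the affine Bruhat order. Moreover, the set of attainable $\lambda$ has a unique minimum, namely the one for which $vt^{\mu+\lambda}$ is minimal among translates of $v$ above $ut^\mu$. This uniqueness (a minimal translate exists uniquely in dominance order) is the main obstacle. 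I would prove it by induction on $\ell(v)$, applying a simple reflection $s_i$ with $vs_i<v$ or $vs_i>v$ and using the lifting property of Bruhat order (Deodhar's ``Z-property'') on the affine side. Once it is established, any path has weight $\geq\wt(u,v)$. Step 1 then forces the paths attaining the minimal weight to be exactly the shortest ones, and all shortest paths share that weight, proving (1) and (2).
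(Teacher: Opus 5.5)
The reduction in your plan is sound, but its key step is left unproved, and the method you propose for it does not work. Steps 1 and 2 are fine up to orientation conventions. The closing deduction is also right: if the set of weights of paths from $u$ to $v$ has a least element in dominance order, and that element is the weight of some path, then $\langle\wt(p),2\rho\rangle=\ell(p)-\ell(v)+\ell(u)$ together with $\langle\alpha_i^\vee,2\rho\rangle=2>0$ gives (1) and (2).

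\textbf{First gap: the least element.} That existence statement is the whole content of the lemma. The paper does not prove the lemma; it imports it from \cite{Pos05} and \cite{MV20}. You cannot substitute Theorem~\ref{thm:QBG} or Proposition~\ref{prop:realization}, since both are stated in terms of $\wt(u,v)$ and rely on this lemma.

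Put $A(u,v)=\{\lambda : ut^{\mu}\le vt^{\mu+\lambda}\}$ in your normalization, where $\ell(vt^{\mu+\lambda})-\ell(ut^\mu)=\langle\lambda,2\rho\rangle+\ell(v)-\ell(u)$.
\begin{itemize}
\item Right multiplication by $s_i$ destroys the shape you need: $vt^{\nu}s_i=vs_i\,t^{s_i(\nu)}$, and $s_i(\nu)$ leaves the chamber of $\mu$.
\item Left multiplication by a finite simple reflection $s$ preserves the shape. Whenever $sv<v$, the Z-property gives $A(u,v)=A(u',sv)$ with $u'=\min(u,su)$, so you can reduce to $v=1$.
\item At $v=1$ the argument stops. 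No finite simple reflection is a left descent of $t^{\mu+\lambda}$. For $su<u$ the Z-property yields only the trivial implication $ut^\mu\le t^{\mu+\lambda}\Rightarrow sut^\mu\le t^{\mu+\lambda}$. For $su>u$ it only relates $A(u,1)$ to $A(su,s)$, which reduces straight back.
\end{itemize}
So your induction merely shows that every $A(u,v)$ equals some $A(u',1)$, for instance $A(w_0,1)$. That base case is governed entirely by quantum edges and is no easier than the original problem. Bringing in $s_0$ shifts translation parts by $\pm u^{-1}(\theta^\vee)$ and gives no evident terminating induction. What is missing is a genuine argument that $A(u,v)$ is closed under coordinatewise minimum in the simple-coroot basis; Bruhat lifting properties alone do not supply it.

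\textbf{Second gap: attainment by a path.} Even granting a least element $\lambda_0$ of $A(u,v)$, you must show that $\lambda_0$ is the weight of an actual path. This means exhibiting a saturated chain from $ut^\mu$ to $vt^{\mu+\lambda_0}$ all of whose members have the shape $zt^{\mu+\nu}$ with $\mu+\nu$ regular.
\begin{itemize}
\item Lam--Shimozono only matches covers between elements already of that shape.
\item Covers can leave that shape, for example $wt^{\nu}\mapsto wt^{\nu}s_i=ws_i\,t^{s_i(\nu)}$.
\item The double-coset argument controls only the $W_0$-orbit of the translation part, not the chamber.
\end{itemize}
In Proposition~\ref{prop:realization}, exactly this possibility is excluded by Theorem~\ref{thm:QBG} together with the fact that $\wt(a,b)+\wt(b,a)=0$ forces $a=b$. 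Both of these presuppose the lemma.

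Finally, path weights are unbounded (you can append $2$-cycles), so $\mu$ must be chosen depending on the paths being compared. You would therefore also need to show that the least element of $A(u,v)$ does not depend on $\mu$.
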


The weight function satisfies the following properties.
\begin{lemma}\label{lem:QBG2}
Let $u,u',u''\in W_0$. Then,
\begin{enumerate}
\item  $\ell(u,u') - \<\wt(u,u'), 2\rho\> = \ell(u') - \ell(u)$.

\item $\wt(u,u')=0$ if and only if $u\le u'$.

\item  $\wt(u,u'') \le \wt(u,u')+ \wt(u',u'')$.

\item $\<\wt(u,u'),\rho\> \le \ell(w_0)$.
\end{enumerate}    
\end{lemma}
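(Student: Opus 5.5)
The plan is to derive all four parts from Lemma~\ref{lem:QBG} together with the definition of edge weights. The tool is a simple bookkeeping identity along paths. Along a Bruhat edge $w\rightharpoonup ws_\a$, the length increases by exactly $1=1-\<0,2\rho\>$. Along a quantum edge $w\rightharpoondown ws_\a$, the length changes by $1-\<\a^\vee,2\rho\>$. So every edge $e$ from $w$ to $w'$ satisfies
\[
\ell(w')-\ell(w)=1-\<\wt(e),2\rho\>.
\]

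For (1), sum this identity along a shortest path from $u$ to $u'$. The sum telescopes, and the weight of the path is $\wt(u,u')$ by Lemma~\ref{lem:QBG}(1). For (3), concatenate a shortest path from $u$ to $u'$ with a shortest path from $u'$ to $u''$. This gives a path from $u$ to $u''$ of weight $\wt(u,u')+\wt(u',u'')$, and Lemma~\ref{lem:QBG}(2) shows this weight is $\ge\wt(u,u'')$ in the dominance order.

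For (2), first suppose $u\le u'$. A Bruhat chain $u=x_0<x_1<\dots<x_m=u'$ with $\ell(x_{i+1})=\ell(x_i)+1$ exists, each step being multiplication on the right by a reflection. This is standard: Bruhat intervals are graded, and covers are of the form $x<xs_\a$. Such a chain is a path of weight $0$, so by Lemma~\ref{lem:QBG}(2) $\wt(u,u')\le 0$. Every edge has weight $\ge0$ because the weights are positive coroots, so $\wt(u,u')=0$. Conversely, suppose $\wt(u,u')=0$. Then a shortest path has zero total weight. Each quantum edge contributes a nonzero positive coroot, and no cancellation can occur in the dominance cone, so the path has no quantum edges. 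A path of Bruhat edges is an increasing chain in Bruhat order, hence $u\le u'$.

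For (4), the plan is to bound the weight by exhibiting a short path. First step: the path from $u$ up to $w_0$ and then down to $u'$. Going up is Bruhat with weight $0$. Going down from $w_0$ to $u'$ is the problem, since descents by simple reflections are quantum edges of weight $\a_i^\vee$. Following a reduced word of $w_0$ downward gives a path from $w_0$ to $u'$ of weight $\sum \a^\vee$ over some positive roots. I expect that if one goes from $w_0$ down to $1$ and then up to $u'$, the descent $w_0\to w_0 s_{i_1}\to\cdots\to 1$ uses quantum edges with weights $\a_{i_j}^\vee$. So the total weight is a sum of $\ell(w_0)$ simple coroots, and pairing each with $\rho$ gives $\<\a_i^\vee,\rho\>=1$. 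Hence $\<\text{weight},\rho\>=\ell(w_0)$.

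Combining this with Lemma~\ref{lem:QBG}(2) gives $\wt(u,u')\le$ this weight. Pairing with $\rho$ is monotone for the dominance order, since positive coroots pair positively with $\rho$. This yields (4).

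The main obstacle is confirming that each simple descent $w\to ws_i$ with $\ell(ws_i)=\ell(w)-1$ is indeed a quantum edge. This needs $\ell(ws_i)=\ell(w)+1-\<\a_i^\vee,2\rho\>=\ell(w)-1$, which holds since $\<\a_i^\vee,2\rho\>=2$. Once that is checked, (4) follows.
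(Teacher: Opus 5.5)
Your proof is correct. For parts (1)--(3) it follows the paper's argument: you telescope the per-edge length identity along a shortest path, and you read (2) and (3) off Lemma~\ref{lem:QBG}. You simply supply details that the paper leaves implicit.

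Part (4) takes a genuinely different route. The paper derives it from (1):
\[
\langle \wt(u,u'),2\rho\rangle=\ell(u,u')+\ell(u)-\ell(u')\le 2\ell(w_0).
\]
The last inequality tacitly needs a bound on the QBG distance. For example, $\ell(u,u')\le\ell(u^{-1}u')\le\ell(w_0)$, obtained by walking along a reduced word of $u^{-1}u'$, since every simple step is an edge. You instead exhibit an explicit path $u\to\cdots\to w_0\to\cdots\to 1\to\cdots\to u'$, which is non-shortest in general, whose weight is a sum of $\ell(w_0)$ simple coroots. You then combine the weight-minimality statement of Lemma~\ref{lem:QBG}(2) with the monotonicity of $\langle\cdot,\rho\rangle$ for the dominance order.

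What each route buys: yours needs no control of $\ell(u,u')$ and does not use (1), so every inequality is explicit. The paper's is a one-liner once (1) is in hand, but it hides the diameter bound.

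Incidentally, the detour through $w_0$ is unnecessary. Descend from $u$ to $1$ by simple reflections, which are quantum edges of weight $\alpha_i^\vee$, and then climb a Bruhat chain to $u'$. This already gives the sharper bound $\langle\wt(u,u'),\rho\rangle\le\ell(u)\le\ell(w_0)$.
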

\begin{proof}
(1) follows from induction on $\ell(u,u')$. (2) and (3) follow from Lemma \ref{lem:QBG}. Using (1), we deduce that $\<\wt(u,u'),2\rho\> = \ell(u,u') + \ell(u) - \ell(u') \le 2\ell(w_0)$. This proves (4).
\end{proof}

We recall the definition of the tilted Bruhat order, introduced in {\cite[\S6]{BFP}}.  

\begin{definition}\label{def:tilted-Bruhat-order}
Let $x \in W_0$. 
\begin{enumerate}
\item The \emph{tilted Bruhat order} $\le_{x}$ is defined as follows. For $u,v \in W_0$, define $u \le_{x} v$ if $\wt(x,u) + \wt(u,v) = \wt(x,v)$. 

\item The \emph{opposite tilted Bruhat order} $\le_{x}^{\mathrm{opp}}$ is defined as follows. For $u,v \in W_0$, define $u \le_{x}^{\mathrm{opp}} v$ if $\wt(u,v) + \wt(v,x) = \wt(u,x)$. 

\end{enumerate}
\end{definition}

By Lemma \ref{lem:QBG} (2), $u \le_{x} v$ if and only if there is a shortest path from $x$ to $v$ passing through $u$. In the case of $x = 1$, the order $\le_{x}$ coincides with the usual Bruhat order $\le$. Similarly, $u \le_{x}^{\mathrm{opp}} v$ if and only if there is a shortest path from $u$ to $x$ passing through $v$. When $x$ is the longest element in $W_0$, the order $\le_{x}^{\mathrm{opp}}$ coincides with the usual Bruhat order $\le$.

\begin{remark}
The poset $(W_0,\le_x)$ has a unique minimal element $x$. But it may not have a unique maximal element. For example, in the case of type $A_2$ and $x = s_1s_2s_1$, $(W_0,\le_x)$ has two maximal elements $s_1$ and $s_2$ (Figure \ref{fig:1}).
\end{remark}

\begin{definition}\label{def:tilted-bruhat-interval}
Let $u,v \in W_0$. The \emph{tilted Bruhat interval} $[u,v]_{\mathrm{tilt}}$, or $[u,v]_{\QBG}$, is defined as
\begin{align*}
[u,v]_{\mathrm{tilt}} = \{ z \in W_0 \mid \wt(u,z) + \wt(z, v) = \wt(u,v) \}.
\end{align*}
Define a partial order $\le_{u,v}$ as follows. For $u',v' \in [u,v]_{\mathrm{tilt}}$, define $u' \le_{u,v} v'$ if $\wt(u,u') + \wt(u',v') + \wt(v',v) = \wt(u,v)$. In this case, we say $[u',v']_{\tilt}$ is a subinterval of $[u,v]_{\tilt}$, denoted by $[u',v']_{\tilt} \subset [u,v]_{\tilt}$.
\end{definition}

By Lemma \ref{lem:QBG} (2), $z \in [u,v]_{\mathrm{tilt}}$ if and only if there is a shortest path from $u$ to $v$ passing through $z$. In the case of $u \le v$, by Lemma \ref{lem:QBG} (2), the tilted Bruhat interval $[u,v]_{\mathrm{tilt}}$ is equal to the ordinary Bruhat interval $[u,v]$. Note also that for $u',v'\in [u,v]_{\mathrm{tilt}}$, $u' <_{u,v} v'$ if and only if there is a shortest path from $u$ to $v$ passing through $u'$ first and $v'$ later.


The following lemma follows directly from the definition.

\begin{lemma}\label{lem:tilted-interval}
Let $u,v \in W_0$. 
\begin{enumerate}
\item For any $x\in W_0$ such that $u \le_x v$, $$([u,v]_{\tilt},\le_{u,v}) = (\{z \in W_0 \mid u\le_x z \le_x v\},\le_x).$$

\item For any $x\in W_0$ such that $u \le_x^{\mathrm{opp}} v$, $$([u,v]_{\tilt},\le_{u,v}) = (\{z \in W_0 \mid u\le_x^{\mathrm{opp}} z \le_x^{\mathrm{opp}} v\},\le_x^{\mathrm{opp}}).$$
\end{enumerate}
\end{lemma}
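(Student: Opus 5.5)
We verify both equalities directly from the definitions. The only tools are Lemma~\ref{lem:QBG} and the triangle inequality of Lemma~\ref{lem:QBG2}(3), applied to weights in $Q^\vee$ with the dominance order. Part (2) is the mirror image of part (1): reverse the roles of source and target, with the fixed vertex $x$ now at the end of the paths. We therefore only describe part (1).

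Assume $u\le_x v$, that is, $\wt(x,u)+\wt(u,v)=\wt(x,v)$.

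\emph{Sets agree.} Let $z\in W_0$. Suppose first that $u\le_x z\le_x v$. Then $\wt(x,u)+\wt(u,z)=\wt(x,z)$ and $\wt(x,z)+\wt(z,v)=\wt(x,v)$. Adding these and subtracting the hypothesis gives $\wt(u,z)+\wt(z,v)=\wt(u,v)$, so $z\in[u,v]_{\tilt}$.

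Conversely, suppose $\wt(u,z)+\wt(z,v)=\wt(u,v)$. Then
\[\wt(x,v)=\wt(x,u)+\wt(u,z)+\wt(z,v)\ge \wt(x,z)+\wt(z,v)\ge \wt(x,v),\]
using the triangle inequality twice. So equality holds throughout. The second equality is $z\le_x v$. The first, after cancelling $\wt(z,v)$, is $\wt(x,u)+\wt(u,z)=\wt(x,z)$, which is $u\le_x z$.

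\emph{Orders agree.} Take $u',v'$ in the common set. First suppose $u'\le_{u,v}v'$, i.e.
\[\wt(u,u')+\wt(u',v')+\wt(v',v)=\wt(u,v).\]
Add $\wt(x,u)$ to both sides and use $u\le_x u'$ and $u\le_x v$. This gives $\wt(x,u')+\wt(u',v')+\wt(v',v)=\wt(x,v)$. Now use $v'\le_x v$, i.e. $\wt(x,v')+\wt(v',v)=\wt(x,v)$, and cancel $\wt(v',v)$. The result is $\wt(x,u')+\wt(u',v')=\wt(x,v')$, which is $u'\le_x v'$.

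Conversely, suppose $u'\le_x v'$. Start from $v'\le_x v$ and substitute $u'\le_x v'$ and then $u\le_x u'$:
\[\wt(x,v)=\wt(x,v')+\wt(v',v)=\wt(x,u)+\wt(u,u')+\wt(u',v')+\wt(v',v).\]
Subtract $\wt(x,u)$ and use the hypothesis $\wt(x,v)-\wt(x,u)=\wt(u,v)$. This gives exactly the defining equality for $u'\le_{u,v}v'$.

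Every step is cancellation in the abelian group $Q^\vee$, plus one sandwich of triangle inequalities. The only point needing care is that the dominance order is a partial order, so the sandwich really forces equality; this holds by antisymmetry. There is no substantial obstacle.
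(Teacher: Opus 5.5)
Your proposal is correct and is the direct verification from the definitions that the paper has in mind; the paper records no proof beyond saying the lemma ``follows directly from the definition.'' Your write-up also makes explicit the two non-formal inputs the reverse set inclusion needs, namely the triangle inequality of Lemma~\ref{lem:QBG2}(3) and antisymmetry of the dominance order, and the mirrored computation for part (2) goes through in the same way.
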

Note that the element $x$ in Lemma \ref{lem:tilted-interval} always exists. Indeed, we have $u\le_u v$ and $u\le_v^{\mathrm{opp}}v$. Lemma \ref{lem:tilted-interval} indicates that the intervals $\{z \mid u\le_x z \le_x v\}$ and $\{z \mid u\le_x^{\mathrm{opp}} z \le_x^{\mathrm{opp}} v\}$ are independent of the choices of $x$.

\section{Affine realizations of tilted Bruhat intervals}\label{sec:3}

\subsection{The affine Weyl group}\label{sec:3.1}

Let $Q^{\vee} = \BZ \Phi^{\vee}$ be the coroot lattice. A cocharacter $\l \in Q^{\vee}$ is said to be \emph{dominant} if $\<\l,\a\> \ge 0$ for all $\a \in \Phi^+$. A cocharacter $\l \in Q^{\vee}$ is said to be \emph{dominant regular} if $\<\l,\a\> > 0$ for all $\a \in \Phi^+$. 

By convention, the set of affine roots is defined as $\Phi_{\aff} = \{\a + m\d \mid \a \in \Phi, m \in \BZ\}$, where $\d$ is the null root. Let $\tilde I=I \sqcup \{0\}$. We set $\ta_i = \a_i$ for $i\in I$ and $\ta_0= -\th + \d$. The set of affine simple roots is $\tilde \D=\{\ta_i; i \in \tilde I\}$. 

Let $W_{\aff}=Q^\vee \rtimes W_0$ denote the affine Weyl group corresponding to $G$. It is a Coxeter group with the set of simple reflections $\tilde \BS=\BS_0 \sqcup \{s_0\}$, where $s_0=t^{-\th^{\vee}}s_{\th}$. Here, $\th$ is the highest root of $\Phi$, $\th^{\vee}$ is the coroot dual to $\th$, and $s_\th$ is the reflection in $W_0$ corresponding to $\th$. 

Let $\le$ and $\ell$ be the Bruhat order and the length function of the Coxeter group $W_{\aff}$, respectively. For $\l\in Q^{\vee}$ and $z \in W_0$, based on our choice of positive affine roots, we have the exact length formula:
\begin{align}\label{eq:len}
    \ell(t^{\l} z) = \sum_{\a \in \Phi^+, z^{-1}(\a) \in \Phi^+} |\< \l,\a\> | + \sum_{\a \in \Phi^+,z^{-1}(\a) \in \Phi^-} |\< \l,\a\> + 1|. 
\end{align}
For $w_1,w_2\in W_{\aff}$, denote $[w_1,w_2]=\{w \in W_{\aff} \mid w_1 \le w \le w_2 \}$.

Note that any element $w\in W_{\aff}$ can be written as $w = x t^{-\l} y$ with $x, y \in W_0$ and $\l$ a dominant cocharacter. However, the presentation $w = x t^{-\l} y$ is not unique in general. Indeed, for any $u \in W_0$ such that $u(\l)=\l$, we have $w = x u t^{-\l} u^{-1} y$. If $\l$ is dominant regular, then the presentation $w = x t^{-\l} y$ is unique. In this case, by evaluating the length formula \eqref{eq:len}, we obtain:
\begin{align}\label{eq:len2}
    \ell(w) = \ell(x) + \<\l,2\rho\> - \ell(y).
\end{align}

Lam--Shimozono \cite[Proposition 4.4]{LS10} proved a correspondence between covering relations in $W_{\aff}$ and edges in $\QBG(\Phi)$ for sufficiently regular elements. Schremmer proves the following deeper result {\cite[Theorem 4.2]{Sch24}} (with a different convention).

\begin{theorem}\label{thm:QBG}
Let $\l,\l'\in Q^{\vee}$ be such that $\<\l,\a\> \ge 2$ and $\<\l',\a\> \ge 2$ for any $\a\in \Phi^+$. Let $w = x t^{-\l}y$ and $w' = x' t^{-\l'} y'$ with $x,y,x',y'\in W_0$. Then $w \le w'$ if and only if
\begin{align*}
\wt(x, x') + \wt(y'^{-1}, y^{-1}) \le \l' - \l.
\end{align*}
\end{theorem}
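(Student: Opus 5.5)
Since Theorem~\ref{thm:QBG} is quoted from \cite{Sch24}, what follows is a plan for how I would reprove it, not the cited argument. The idea is to reduce the Bruhat relation $w\le w'$ to chains of covering relations and to translate each cover into an edge of $\QBG(\Phi)$ via Lam--Shimozono \cite[Proposition 4.4]{LS10}. For sufficiently regular $\l$ (in the sense that $\<\l,\a\>$ is large for all $\a\in\Phi^+$), that result should say the following. The covers $x t^{-\l} y\lessdot w''$ are exactly the elements obtained in one of two ways.

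\begin{enumerate}
\item On the left: replace $x$ by $xs_\a$ along a $\QBG$-edge $x\to xs_\a$, and replace $\l$ by $\l+\wt(x\to xs_\a)$.
\item On the right: replace $y$ by $s_\a y$ along a $\QBG$-edge $y^{-1}\to y^{-1}s_\a$, and replace $\l$ by $\l+\wt$ of that edge.
\end{enumerate}

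The length check is formula \eqref{eq:len2}. For a Bruhat edge the length changes by $+1$ through $\ell(x)$ or $-\ell(y)$. For a quantum edge the drop of $\ell(x)$ by $\<\a^\vee,2\rho\>-1$ is compensated exactly by the increase of $\<\l,2\rho\>$.

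The argument has three steps.

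\textbf{Step 1: stabilization.} I would show that both sides of the claimed equivalence are unchanged when $\l,\l'$ are replaced by $\l+\mu,\l'+\mu$ for any dominant $\mu$. For the inequality $\wt(x,x')+\wt(y'^{-1},y^{-1})\le\l'-\l$ this is obvious. For the Bruhat side I would try to prove $w\le w'\iff w t^{-\mu'}\le w' t^{-\mu'}$ for suitable conjugates. The approach is to write
\[
x t^{-\l-\mu} y=(x t^{-\l} y)\cdot(y^{-1}t^{-\mu}y),
\]
check length-additivity of this product using \eqref{eq:len}, and apply the subword property. This reduces the theorem to $\l,\l'$ as regular as we like. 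The hypothesis $\<\l,\a\>\ge2$ is what should make this length-additivity and the cover description hold, rather than only for ``very large'' $\l$.

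\textbf{Step 2: ``only if''.} Take a maximal chain $w=w_0\lessdot w_1\lessdot\cdots\lessdot w_n=w'$. After Step 1, every $w_i$ is regular enough to apply the cover description, because each cover changes the translation part by a single coroot of bounded size. Sorting the steps into left moves and right moves produces two paths in $\QBG(\Phi)$: one from $x$ to $x'$, and one from $y'^{-1}$ to $y^{-1}$. Their weights add up to $\l'-\l$. By Lemma~\ref{lem:QBG}(2), each path has weight at least $\wt(x,x')$ and $\wt(y'^{-1},y^{-1})$ respectively, which gives the inequality.

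\textbf{Step 3: ``if''.} Put $\nu=\l'-\l-\wt(x,x')-\wt(y'^{-1},y^{-1})$, which is $\ge0$ in the dominance order. Write $\nu$ as a nonnegative combination of simple coroots. First, walk a shortest path from $x$ to $x'$ by left covers and a shortest path from $y'^{-1}$ to $y^{-1}$ by right covers, staying regular after Step 1. Then absorb $\nu$ with loops: for a simple root $\a_i$, the edges $x'\to x's_i$ and $x's_i\to x'$ are one Bruhat and one quantum edge of total weight $\a_i^\vee$. Each such two-step loop is a chain of two covers adding $\a_i^\vee$ to the translation. Concatenating all of this gives a chain from $w$ to $w'$, so $w\le w'$.

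\textbf{Main obstacle.} I expect the hardest part to be Step 1: proving that the Bruhat relation is stable under the simultaneous dominant shift, and making the cover description from \cite{LS10} valid all the way down to the bound $\<\l,\a\>\ge2$ rather than only asymptotically. The first thing I would try is a direct reflection-by-reflection comparison using \eqref{eq:len}. If that fails, I would pass through the semi-infinite (Peterson) order on $W_{\aff}$, where translation invariance is built in.
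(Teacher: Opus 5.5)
The paper does not prove this statement; it imports it from \cite[Theorem~4.2]{Sch24}. Judged on its own, your plan has a genuine gap, and it is Step~1. This step is not a technical reduction. First, the proposed mechanism cannot work, because a simultaneous shift multiplies the two elements by \emph{different} factors:
\begin{align*}
xt^{-(\lambda+\mu)}y&=(xt^{-\lambda}y)\,t^{-y^{-1}(\mu)}=t^{-x(\mu)}\,(xt^{-\lambda}y),\\
x't^{-(\lambda'+\mu)}y'&=(x't^{-\lambda'}y')\,t^{-y'^{-1}(\mu)}=t^{-x'(\mu)}\,(x't^{-\lambda'}y').
\end{align*}
The subword property and Lemma~\ref{lem:length-additive} only relate $w\le w'$ to $wz\le w'z$ for one common $z$. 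The semi-infinite order is likewise invariant only under a common translation. So when $x\ne x'$ and $y\ne y'$, no choice of conjugates helps. Second, the right-hand side of the theorem is visibly invariant under $(\lambda,\lambda')\mapsto(\lambda+\mu,\lambda'+\mu)$. Hence, given the superregular case, shift-invariance of the Bruhat side for merely $2$-regular elements with $\lambda'-\lambda$ unbounded is equivalent to the theorem itself.

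Without Step~1, Steps~2 and~3 break down. The right control on intermediate elements is the double-coset bound $\lambda\le\lambda''\le\lambda'$ on their dominant translation parts, as in the proof of Proposition~\ref{prop:realization}. Saying that each cover changes the translation by one coroot does not bound the accumulated change. Once $\lambda'-\lambda$ is large, that bound permits non-regular $\lambda''$. For example, in type $A_2$ take $\lambda=2\rho^\vee$ and $\lambda''=4\alpha_1^\vee+2\alpha_2^\vee$. Then $\lambda''\ge\lambda$, $\langle\lambda'',\alpha_2\rangle=0$, and $w_0t^{-\lambda''}\in[t^{-\lambda},w_0t^{-4\rho^\vee}]$. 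For such $\lambda''$ the presentation is not unique and \eqref{eq:len2} fails. Similarly, adding $\alpha_i^\vee$ in your loops can destroy dominance, since $\langle\alpha_i^\vee,\alpha_j\rangle$ can be $-3$.

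What Steps~2 and~3 do prove, once the cover description of \cite[Proposition~4.4]{LS10} is rederived for presentations $xt^{-\lambda}y$ from \eqref{eq:len}, is an asymptotic statement. The equivalence holds whenever every $\lambda''$ with $\lambda\le\lambda''\le\lambda'$ is dominant and superregular, e.g.\ when $\lambda$ is far from the walls compared with $\lambda'-\lambda$. That weaker form would suffice for the paper's own applications, possibly after enlarging the constant in ``sufficiently dominant regular''. There, Theorem~\ref{thm:QBG} is used only for translation parts $\mu+\wt(\cdot,\cdot)$, whose differences are bounded. It is not, however, the theorem with the sharp bound $\ge 2$.

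Finally, your item~(2) is misoriented. A right move replaces $y$ by $y''=s_\alpha y$, where $y''^{-1}\to y^{-1}$ (new to old) is a $\mathrm{QBG}$-edge of weight $\lambda''-\lambda$. With the edge $y^{-1}\to y^{-1}s_\alpha$ as you wrote it, a Bruhat edge raises $\ell(y)$ and so lowers $\ell(w)$ by \eqref{eq:len2}. Your Step~2 already uses the correct orientation, namely a path from $y'^{-1}$ to $y^{-1}$.
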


\subsection{Affine realizations of tilted Bruhat intervals}
A cocharacter $\mu \in Q^{\vee}$ is said to be \emph{sufficiently dominant regular} if $\<\mu,\a\> \ge 3\ell(w_0)+3$ for any $\a \in \Phi^+$. Here, $w_0$ is the longest element in $W_0$. By Lemma \ref{lem:QBG2} (4), for a sufficiently dominant regular cocharacter $\mu$, we have
\begin{align}\label{eq:sufficiently}
    \<\mu \pm \wt(u,v),\a\> \ge 2,\quad\text{for any }u,v\in W_0,\a\in\Phi^+.
\end{align}
In particular, $\mu \pm \wt(u,v)$ satisfies the condition in Theorem \ref{thm:QBG}. The bound $3\ell(w_0)+3$ also ensures the regularity of the intermediate elements used in the proof of Proposition \ref{prop:realization}.

Let $x\in W_0$ and let $\mu\in Q^{\vee}$ be sufficiently dominant regular. We use the following \emph{type $\mathrm{I}$ affine realization map}:
\begin{equation}\label{eq:psi}
\begin{aligned}
\Psi_{\mu,x}=\Psi_{\mu,x}^{\mathrm{I}}:(W_0,\le_x)&\longrightarrow (W_{\aff},\le),\\
z&\longmapsto w_{\mu,x}(z)=w_{\mu,x}^{\mathrm{I}}(z)
:=zt^{-(\mu+\wt(x,z))}x^{-1}.
\end{aligned}
\end{equation}
The other affine realizations are recorded at the end of this section.

\begin{proposition}
\label{prop:realization}
Let $x\in W_0$ and let $\mu\in Q^{\vee}$ be sufficiently dominant regular. Then $\Psi_{\mu,x}$ is an injective morphism of posets. For any $u,v\in W_0$ with $u\le_xv$, it induces a poset isomorphism
\[
([u,v]_{\mathrm{tilt}},\le_{u,v})
\xrightarrow{\sim}
([w_{\mu,x}(u),w_{\mu,x}(v)],\le).
\]
\end{proposition}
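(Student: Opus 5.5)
Throughout, the plan is to translate Bruhat comparisons in $W_{\aff}$ into weight inequalities using Theorem~\ref{thm:QBG}. For $z,z'\in W_0$ write $w(z)=w_{\mu,x}(z)=z t^{-\l_z}x^{-1}$ with $\l_z=\mu+\wt(x,z)$. By \eqref{eq:sufficiently} each $\l_z$ satisfies $\<\l_z,\a\>\ge 2$ for all $\a\in\Phi^+$. Theorem~\ref{thm:QBG} with $y=y'=x^{-1}$ (so that $\wt(y'^{-1},y^{-1})=\wt(x,x)=0$) then gives
\[
w(z)\le w(z') \iff \wt(z,z')\le \wt(x,z')-\wt(x,z).
\]
Lemma~\ref{lem:QBG2}(3) gives the reverse inequality $\wt(x,z')\le \wt(x,z)+\wt(z,z')$. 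Hence $w(z)\le w(z')$ holds exactly when $\wt(x,z)+\wt(z,z')=\wt(x,z')$, i.e. exactly when $z\le_x z'$. So $\Psi_{\mu,x}$ is an order embedding, meaning it both preserves and reflects the order. Injectivity follows from uniqueness of the presentation $zt^{-\l}x^{-1}$ with $\l$ dominant regular: $w(z)=w(z')$ forces $z=z'$. Together with Lemma~\ref{lem:tilted-interval}(1), this shows that $\Psi_{\mu,x}$ restricts to an order isomorphism from $[u,v]_{\tilt}=\{z\mid u\le_x z\le_x v\}$ onto its image, which lies inside $[w(u),w(v)]$.

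The main obstacle is surjectivity: every $w\in[w(u),w(v)]$ must be of the form $w(z)$. I would write an arbitrary such $w$ as $w=z t^{-\l} y$ with $\l$ dominant, and first establish enough regularity of $\l$ to apply Theorem~\ref{thm:QBG}. The length formula \eqref{eq:len} gives $\ell(w)\le \ell(w(v))$. Moreover $\l$ is close to $\mu$: the translation part varies by a bounded amount along a Bruhat chain of length at most $\ell(w(v))-\ell(w(u))$. By \eqref{eq:len2} and Lemma~\ref{lem:QBG2}(1), that length difference is $\ell(u,v)\le$ roughly $2\ell(w_0)$. Each covering step changes the translation by a coroot, and since $\<\mu,\a\>\ge 3\ell(w_0)+3$ this should keep $\<\l,\a\>\ge 2$. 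I would make this step precise by induction along a maximal chain $w(u)\lessdot\cdots\lessdot w(v)$, where each cover is a multiplication by an affine reflection. I expect this bookkeeping, which is why the bound $3\ell(w_0)+3$ is chosen, to be the delicate point.

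Once $\l$ is regular enough, Theorem~\ref{thm:QBG} applied to $w(u)\le w$ and $w\le w(v)$ gives
\[
\wt(u,z)+\wt(y^{-1},x)\le \l-\l_u,\qquad \wt(z,v)+\wt(x,y^{-1})\le \l_v-\l.
\]
Adding these and using $\l_v-\l_u=\wt(x,v)-\wt(x,u)=\wt(u,v)$ (because $u\le_x v$) yields
\[
\wt(u,z)+\wt(z,v)+\wt(y^{-1},x)+\wt(x,y^{-1})\le \wt(u,v).
\]
By Lemma~\ref{lem:QBG2}(3) and nonnegativity, $\wt(u,v)\le\wt(u,z)+\wt(z,v)$, so both inequalities are equalities. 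Hence $\wt(y^{-1},x)+\wt(x,y^{-1})=0$, and both weights vanish since each is a nonnegative combination of coroots. By Lemma~\ref{lem:QBG2}(2) this gives $y^{-1}\le x\le y^{-1}$, so $y=x^{-1}$. Equality also gives $\l=\l_u+\wt(u,z)$. Finally $z\in[u,v]_{\tilt}$, so $u\le_x z$ by Lemma~\ref{lem:tilted-interval}, which gives $\wt(x,z)=\wt(x,u)+\wt(u,z)$ and hence $\l=\l_z$. Therefore $w=w(z)$, which completes surjectivity and the isomorphism.
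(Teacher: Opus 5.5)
Your order-embedding argument (Theorem~\ref{thm:QBG} with $y=y'=x^{-1}$, combined with Lemma~\ref{lem:QBG2}(3)) is exactly the paper's. So is the final step of surjectivity: summing the two inequalities, forcing equality, and deducing $y=x^{-1}$ and $\lambda=\mu+\wt(x,z)$.

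The gap is the step you flag yourself. For an arbitrary $w=zt^{-\lambda}y\in[w_{\mu,x}(u),w_{\mu,x}(v)]$ with $\lambda$ dominant, you must show $\langle\lambda,\alpha\rangle\ge 2$ for all $\alpha\in\Phi^+$ before Theorem~\ref{thm:QBG} can be applied. The chain induction you sketch does not establish this, for two reasons.

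\textbf{The cover claim is unjustified.} You assume each cover changes the translation part by a coroot. In fact, if $w_1=at^{-\lambda_1}b$ and $\tilde\gamma=\gamma+m\delta$, then $w_1s_{\tilde\gamma}=as_{b(\gamma)}t^{-\lambda_1+(\langle\lambda_1,b(\gamma)\rangle-m)\,b(\gamma^\vee)}b$. This translation must still be re-dominantized. The clean description of covers (translation unchanged, or shifted by one positive coroot) is available only for sufficiently regular elements, in the style of Lam--Shimozono. Using it to propagate regularity along the chain is therefore circular.

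\textbf{The bookkeeping does not close.} Even granting one-coroot steps, two-sided bookkeeping loses up to $3$ in $\langle\cdot,\alpha\rangle$ per step, over as many as $\ell(u,v)$ steps. Meanwhile $\langle\mu+\wt(x,u),\alpha\rangle$ is only guaranteed to be $\ge 3$. The constant $3\ell(w_0)+3$ is not designed for that kind of estimate.

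The missing ingredient is the double coset argument the paper uses. The projection $W_{\aff}\to W_0\backslash W_{\aff}/W_0$ to maximal representatives is order-preserving. On the double cosets $W_0t^{-\lambda}W_0$ with $\lambda$ dominant, Bruhat order is dominance order on $\lambda$. Hence $\mu+\wt(x,u)\le\lambda\le\mu+\wt(x,v)$, that is, $0\le\lambda-\mu\le\wt(x,v)$.

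Now write $\lambda-\mu=\sum_i c_i\alpha_i^\vee$ with $c_i\ge 0$. Then $\sum_i c_i=\langle\lambda-\mu,\rho\rangle\le\langle\wt(x,v),\rho\rangle\le\ell(w_0)$ by Lemma~\ref{lem:QBG2}(4). Since $\langle\alpha_i^\vee,\alpha\rangle\ge -3$, this gives $\langle\lambda,\alpha\rangle\ge 3\ell(w_0)+3-3\ell(w_0)=3$.

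Both sides of this sandwich are needed. The bound $\ell(w)\le\ell(w_{\mu,x}(v))$ that you mention controls only $\langle\lambda,2\rho\rangle$, not each $\langle\lambda,\alpha\rangle$. Once the sandwich is in hand, no induction along chains is required, and the rest of your proof goes through as written.
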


\begin{proof}
It is clear that $\Psi_{\mu,x}$ is injective.

Since $\mu$ is sufficiently dominant regular, by (\ref{eq:sufficiently}), for any $z\in W_0$, $\mu +\wt(x,z)$ satisfies the regularity condition in Theorem \ref{thm:QBG}. Then, for $z_1,z_2 \in W_0$, Theorem \ref{thm:QBG} and Lemma \ref{lem:QBG2} (3) imply that $w_{\mu,x}(z_1)\le w_{\mu,x}(z_2)$ if and only if $z_1 \le_x z_2$.


Let $u,v\in W_0$ be such that $u\le_x v$. By Lemma \ref{lem:tilted-interval}, we have $([u,v]_{\mathrm{tilt}},\le_{u,v}) = (\{z\mid u\le_x z \le_x v\}, \le_x)$. Then $\Psi_{\mu,x}$ induces an injective poset morphism $([u,v]_{\mathrm{tilt}},\le_{u,v})\to ([w_{\mu,x}(u),w_{\mu,x}(v)],\le)$. It remains to prove that the image is $[w_{\mu,x}(u),w_{\mu,x}(v)]$. Let $w \in W_{\aff}$ be such that $w_{\mu,x}(u) \le w \le w_{\mu,x}(v)$. Write $w = a t^{-\l} b$ with $\l$ dominant. Using the double coset argument, it is clear that the anti-dominant translation part $-\l$ of $w$ lies between those of $w_{\mu,x}(u)$ and $w_{\mu,x}(v)$. By (\ref{eq:sufficiently}), we see that Theorem \ref{thm:QBG} applies to $w$. 

Therefore, we get
\begin{align*}
    \wt(u,a) + \wt(b^{-1},x) \le \l-\mu - \wt(x,u)\\
    \wt(a,v) + \wt(x,b^{-1}) \le \mu + \wt(x,v) - \l.
\end{align*}
Summing up these two inequalities and using the condition $u\le_x v$, we get
$$\wt(u,a) + \wt(a,v) + \wt(b^{-1},x) + \wt(x,b^{-1})\le \wt(u,v).$$
This forces all equalities to hold. In particular, we have $\wt(u,a) + \wt(a,v) = \wt(u,v)$, $b = x^{-1}$ and $\l = \mu + \wt(x,a)$. That is, $a \in [u,v]_{\tilt}$ and $w = w_{\mu,x}(a)$. This completes the proof.
\end{proof}

\begin{example}
In type $A_2$, take $x=u=s_1s_2$ and $v=s_2s_1$.
The affine realization gives an isomorphism of posets
\[
([s_1s_2,s_2s_1]_{\mathrm{tilt}},\le_{u,v})
\cong
\bigl([s_1s_2t^{-\mu}s_2s_1,
s_2s_1t^{-(\mu+\alpha_2^\vee)}s_2s_1],\le\bigr).
\]
Figure~\ref{fig:A2-tilted-affine} displays the two intervals with
corresponding vertices in the same positions. On the left, solid arrows
are Bruhat edges and dashed arrows are quantum edges. On the right, all arrows are affine Bruhat covers:
the solid arrows preserve the translation part in the displayed
expressions, whereas the dashed arrows change the translation part.

\begin{figure}[htbp]
\centering
\makebox[\textwidth][l]{%
\hspace*{9mm}
\begin{minipage}{\textwidth}
\centering

\begin{minipage}[t]{0.30\textwidth}
\vspace{0pt}
\centering
\begin{tikzpicture}[
  >=Stealth,
  every node/.style={
    inner sep=3pt,
    minimum height=7mm,
    font=\small
  },
  bruhat/.style={->,semithick},
  quantum/.style={bruhat,dashed}
]
  \node (u) at (0,0)
    {$s_1s_2$};
  \node (a) at (-1.15,1.35)
    {$s_1$};
  \node (b) at (1.15,1.35)
    {$s_1s_2s_1$};
  \node (v) at (0,2.7)
    {$s_2s_1$};

  \draw[quantum] (u) -- (a);
  \draw[bruhat]  (a) -- (v);
  \draw[bruhat]  (u) -- (b);
  \draw[quantum] (b) -- (v);
\end{tikzpicture}

\end{minipage}\hfill
\begin{minipage}[t]{0.68\textwidth}
\vspace{0pt}
\centering
\begin{tikzpicture}[
  >=Stealth,
  every node/.style={
    inner xsep=2pt,
    inner ysep=3pt,
    minimum height=7mm,
    font=\small
  },
  bruhat/.style={->,semithick},
  translated/.style={bruhat,dashed}
]
  \node (u) at (0,0)
    {$s_1s_2t^{-\mu}s_2s_1$};
  \node (a) at (-1.55,1.35)
    {$s_1t^{-(\mu+\alpha_2^\vee)}s_2s_1$};
  \node (b) at (1.55,1.35)
    {$s_1s_2s_1t^{-\mu}s_2s_1$};
  \node (v) at (0,2.7)
    {$s_2s_1t^{-(\mu+\alpha_2^\vee)}s_2s_1$};

  \draw[translated]
    ([xshift=-3.5mm]u.north) --
    ([xshift= 3.5mm]a.south);

  \draw[bruhat]
    ([xshift= 3.5mm]a.north) --
    ([xshift=-3.5mm]v.south);

  \draw[bruhat]
    ([xshift= 3.5mm]u.north) --
    ([xshift=-3.5mm]b.south);

  \draw[translated]
    ([xshift=-3.5mm]b.north) --
    ([xshift= 3.5mm]v.south);
\end{tikzpicture}

\end{minipage}

\end{minipage}%
}
\caption{An example of affine realization}
\label{fig:A2-tilted-affine}
\end{figure}
\end{example}

Besides the map $\Psi_{\mu,x}^{\mathrm{I}}$ in \eqref{eq:psi}, we consider the following three variants:
\begin{equation}\label{eq:other-realizations}
\begin{aligned}
\Psi_{\mu,x}^{\mathrm{II}}:(W_0,\le_x) \longrightarrow (W_{\aff},\le^{\opp}), \quad
& z \longmapsto w_{\mu,x}^{\mathrm{II}}(z):=xt^{-(\mu-\wt(x,z))}z^{-1}, \\
\Psi_{\mu,x}^{\mathrm{III}}:(W_0,\le_x^{\opp})\longrightarrow (W_{\aff},\le),\quad
& z \longmapsto w_{\mu,x}^{\mathrm{III}}(z):=zt^{-(\mu-\wt(z,x))}x^{-1},\\
\Psi_{\mu,x}^{\mathrm{IV}}:(W_0,\le_x^{\opp}) \longrightarrow (W_{\aff},\le^{\opp}),\quad 
& z \longmapsto w_{\mu,x}^{\mathrm{IV}}(z):=xt^{-(\mu+\wt(z,x))}z^{-1}.
\end{aligned}
\end{equation}
Here $\le_x^{\opp}$ is the opposite tilted Bruhat order of Definition~\ref{def:tilted-Bruhat-order}(2), and $\le^{\opp}$ is the opposite Bruhat order on $W_{\aff}$.

The same argument as in the proof of Proposition~\ref{prop:realization} gives the following.
\begin{proposition}
Let $x\in W_0$ and let $\mu\in Q^{\vee}$ be sufficiently dominant regular. The three maps in \eqref{eq:other-realizations} are injective morphisms of posets. Moreover:
\begin{enumerate}
\item If $u\le_xv$, then $\Psi_{\mu,x}^{\mathrm{II}}$ induces a poset isomorphism
\[
([u,v]_{\mathrm{tilt}},\le_{u,v})
\xrightarrow{\sim}
([w_{\mu,x}^{\mathrm{II}}(v),w_{\mu,x}^{\mathrm{II}}(u)],\le^{\opp}).
\]
\item If $u\le_x^{\opp}v$, then $\Psi_{\mu,x}^{\mathrm{III}}$ and $\Psi_{\mu,x}^{\mathrm{IV}}$ induce poset isomorphisms
\[
\begin{aligned}
([u,v]_{\mathrm{tilt}},\le_{u,v})
&\xrightarrow{\sim}
([w_{\mu,x}^{\mathrm{III}}(u),w_{\mu,x}^{\mathrm{III}}(v)],\le),\\
([u,v]_{\mathrm{tilt}},\le_{u,v})
&\xrightarrow{\sim}
([w_{\mu,x}^{\mathrm{IV}}(v),w_{\mu,x}^{\mathrm{IV}}(u)],\le^{\opp}).
\end{aligned}
\]
\end{enumerate}
\end{proposition}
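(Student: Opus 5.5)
We follow the proof of Proposition~\ref{prop:realization} for each of the three maps, after reducing as much as possible to that case through two symmetries. The first is inversion $w\mapsto w^{-1}$ on $W_{\aff}$, which preserves Bruhat order. The second is the order-reversing involution of $W_0$ given by $z\mapsto zw_0$ (or $z\mapsto w_0 z$). We do not rely on the symmetries alone, however: each map is checked directly against Theorem~\ref{thm:QBG}. For all $z$, the translation parts $\mu\pm\wt(\cdot,\cdot)$ are $2$-regular by \eqref{eq:sufficiently}, so Theorem~\ref{thm:QBG} applies to every $w^{\bullet}_{\mu,x}(z)$. Injectivity is clear in each case, since $z$ is recovered from the finite Weyl group factor, and the presentation is unique because the translation part is regular.

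\textbf{Step 1: order comparison.} For type II, take $z_1,z_2$. The relation $w^{\mathrm{II}}(z_2)\le w^{\mathrm{II}}(z_1)$ (that is, $w^{\mathrm{II}}(z_1)\le^{\opp} w^{\mathrm{II}}(z_2)$) holds, by Theorem~\ref{thm:QBG} with $x=x'$, $y=z_2^{-1}$, $y'=z_1^{-1}$, if and only if
\[
\wt(z_1,z_2)\le (\mu-\wt(x,z_1))-(\mu-\wt(x,z_2))=\wt(x,z_2)-\wt(x,z_1).
\]
Lemma~\ref{lem:QBG2}(3) gives the reverse inequality, so this condition is equivalent to equality, which is exactly $z_1\le_x z_2$.

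Type III is analogous. The relation $w^{\mathrm{III}}(z_1)\le w^{\mathrm{III}}(z_2)$ holds if and only if $\wt(z_1,z_2)\le \wt(z_1,x)-\wt(z_2,x)$, which by the triangle inequality is equivalent to $\wt(z_1,z_2)+\wt(z_2,x)=\wt(z_1,x)$, i.e.\ $z_1\le_x^{\opp}z_2$.

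For type IV, the relation $w^{\mathrm{IV}}(z_2)\le w^{\mathrm{IV}}(z_1)$ holds if and only if $\wt(z_1,z_2)\le \wt(z_1,x)-\wt(z_2,x)$, which again gives the same condition.

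\textbf{Step 2: surjectivity onto the interval.} We treat type II; the others are identical with the roles adjusted. Let $w$ lie in the affine interval with endpoints $w^{\mathrm{II}}(v)$ and $w^{\mathrm{II}}(u)$, and write $w=at^{-\l}b$ with $\l$ dominant. The same double coset argument as in Proposition~\ref{prop:realization} places $\l$ between the two endpoint translation parts in dominance order, so $\l$ is $2$-regular and Theorem~\ref{thm:QBG} applies on both sides. This gives
\[
\wt(x,a)+\wt(u,b^{-1})\le \mu-\wt(x,u)-\l,\qquad \wt(a,x)+\wt(b^{-1},v)\le \l-\mu+\wt(x,v).
\]
Summing the two inequalities and using $u\le_x v$, which gives $\wt(x,v)-\wt(x,u)=\wt(u,v)$, we obtain
\[
\wt(x,a)+\wt(a,x)+\wt(u,b^{-1})+\wt(b^{-1},v)\le\wt(u,v).
\]
Lemma~\ref{lem:QBG2}(3) gives $\wt(u,b^{-1})+\wt(b^{-1},v)\ge \wt(u,v)$, and Lemma~\ref{lem:QBG2}(2) gives $\wt(x,a)+\wt(a,x)\ge0$ with equality only when $a=x$. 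Hence all inequalities are equalities, so $a=x$, $b^{-1}\in[u,v]_{\tilt}$, and $\l=\mu-\wt(x,b^{-1})$. This says $w=w^{\mathrm{II}}(b^{-1})$.

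Finally, Lemma~\ref{lem:tilted-interval}(1) identifies $\{z\mid u\le_x z\le_x v\}$ with $[u,v]_{\tilt}$, so the image is the required interval. For types III and IV, Lemma~\ref{lem:tilted-interval}(2) plays the same role.

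\textbf{Main obstacle.} The delicate point is the regularity claim for the intermediate $w$: the anti-dominant translation part of $w$ lies between those of the endpoints, and its $2$-regularity then follows from the bound $3\ell(w_0)+3$. For the endpoints themselves this is \eqref{eq:sufficiently}. For intermediate elements we would argue as in Proposition~\ref{prop:realization}. If $w\le w'$ in $W_{\aff}$, then the translation parts of their $W_0$-double cosets are comparable in dominance order, up to the bounded shifts coming from $\wt$. Since the total variation is at most $\wt(u,v)$, whose pairing with $\rho$ is at most $\ell(w_0)$, the margin $3\ell(w_0)+3$ leaves $\<\l,\a\>\ge2$.

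A second point to check is sign bookkeeping. In Theorem~\ref{thm:QBG} the left factor enters as $\wt(x,x')$ and the right factor as $\wt(y'^{-1},y^{-1})$, and swapping roles for $\le^{\opp}$ reverses which element plays $w$ and which plays $w'$. I would verify each of the three inequality pairs directly against the theorem rather than relying on symmetry.
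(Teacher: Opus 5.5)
Your proposal is correct and follows the paper's argument. The paper's proof is just ``the same argument as in Proposition~\ref{prop:realization}'', namely Theorem~\ref{thm:QBG} plus the triangle inequality for the order embedding, then summing the two sandwich inequalities to force equalities for surjectivity, and your Step~1 comparisons for types II, III and IV all check out.

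One harmless slip: in Step~2 you swapped $\wt(x,a)$ and $\wt(a,x)$ between the two inequalities. Theorem~\ref{thm:QBG} actually gives $\wt(a,x)+\wt(u,b^{-1})\le \mu-\wt(x,u)-\l$ from $w\le w^{\mathrm{II}}_{\mu,x}(u)$, and $\wt(x,a)+\wt(b^{-1},v)\le \l-\mu+\wt(x,v)$ from $w^{\mathrm{II}}_{\mu,x}(v)\le w$. Only their sum is used, so your conclusion stands.
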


\begin{remark}
The isomorphism $\Psi_{\mu,x}^{\mathrm{II}}$: $([u,v]_{\mathrm{tilt}},\le_{u,v}) \cong ([w_{\mu,x}^{\mathrm{II}}(v),w_{\mu,x}^{\mathrm{II}}(u)],\le^{\opp})$ is essentially proved in \cite[Theorem 4.8]{LS10} (with a complicated change of convention)\footnote{ Indeed, a presentation $w  t^{\lambda+\wt(u,w)}u^{-1}$, $\l$ sufficiently anti-dominant in \cite[Theorem 4.8]{LS10} corresponds to the presentation $u t^{-(-\l - \wt(u,w))} w^{-1}$ in our convention. The major difference is that in \cite{LS10}, $W_{\aff}$ acts on the set of chambers on the right, while in this paper, $W_{\aff}$ acts on the left.}. Types $\mathrm{I}$ and $\mathrm{III}$ correspond to ``far covers'' in \cite[Section 4]{LS10}, while types $\mathrm{II}$ and $\mathrm{IV}$ correspond to ``near covers''. We give a different proof here using Theorem \ref{thm:QBG}.
\end{remark}

\section{Abstract tilted Richardson varieties}
\label{sec:4}
\subsection{Richardson varieties in the affine flag variety}\label{sec:4.1}

Define $LG(R)=G(R((\e)))$,
$L^+G(R)=G(R[[\e]])$ and $L^-G(R)=G(R[\e^{-1}])$, for any $k$-algebra $R$. Let $\operatorname{ev}_0:L^+G\to G$ be the evaluation map at $\e=0$. Let $\operatorname{ev}_\infty:L^-G\to G$ be the evaluation map at
$\e^{-1}=0$. The standard Iwahori subgroup is
$\CI=\operatorname{ev}_0^{-1}(B)$. The opposite Iwahori subgroup is
$\CI^-=\operatorname{ev}_\infty^{-1}(B^-)$.

Recall that the affine simple roots are $\ta_i = \a_i$, $i\in I$, and $\ta_0= -\th + \d$. Recall that $\tilde I = I \sqcup \{0\}$ (see \S\ref{sec:3.1}). For each $i\in\widetilde I$, fix the standard rank-one homomorphism
\begin{align*}
\varphi_i:\mathrm{SL}_{2,k}\longrightarrow LG
\end{align*}
associated with the affine simple root $\widetilde\alpha_i$. Define
\begin{align*}
\widetilde x_i(a)
=
\varphi_i\!\begin{pmatrix}1&a\\0&1\end{pmatrix},
\qquad
\widetilde y_i(a)
=
\varphi_i\!\begin{pmatrix}1&0\\a&1\end{pmatrix}.
\end{align*}
Set $\dot s_i
=
\widetilde x_i(-1)\widetilde y_i(1)\widetilde x_i(-1)$. For each $w \in W_{\aff}$, we choose a reduced expression $w = s_{i_1} s_{i_2} \cdots s_{i_n}$ and define the lift $\dot{w} \in LG(k)$ by $\dot{w} = \dot{s}_{i_1} \dot{s}_{i_2} \cdots \dot{s}_{i_n}$.

Let $\widetilde{\CB} = LG /\CI$ be the affine flag variety. By \cite[9.h.]{PR08} and a faithfully flat argument, $\widetilde{\CB}$ is $k$-isomorphic as an ind-scheme to the flag variety $\widehat{G}/\widehat{B}^+$ of the associated affine Kac--Moody group, where the central extension and loop rotation parameters of the Kac--Moody group are naturally absorbed by the Iwahori subgroup quotient. This identification allows us to apply results on Kac--Moody flag varieties to the loop group setting.

As in the finite case, for $w,w'\in W_{\aff}$, the \emph{open affine Richardson variety} is defined as
\begin{align*}
\widetilde{\CB}_{w,w'}^{\circ}
=
\bigl(\CI^-\dot{w}\CI/\CI\bigr)
\cap
\bigl(\CI\dot{w}'\CI/\CI\bigr).
\end{align*}
Its Zariski closure in $\widetilde{\CB}$ is the (closed) \emph{affine Richardson variety} $\widetilde{\CB}_{w,w'}$.

The open affine Richardson variety $\widetilde{\CB}_{w,w'}^{\circ}$ is non-empty if and only if $w \le w'$. In this case, $\widetilde{\CB}_{w,w'}^{\circ}$ is an irreducible algebraic variety of pure dimension $\ell(w') - \ell(w)$. Moreover, $$\widetilde{\CB}_{w,w'}=\bigsqcup_{w_1, w_2 \in W_{\aff}; w \le w_1 \le w_2 \le w'} \widetilde{\CB}_{w_1,w_2}^{\circ}.$$

\begin{definition}\label{def:tilted-richardson}
Let $u,v\in W_0$. Choose $x\in W_0$ such that $u \le_x v$ and choose a sufficiently dominant regular cocharacter $\mu$. Define
\begin{align*}
\CT_{u,v}^{\mu,x,\circ} = \widetilde{\CB}_{w_{\mu,x}(u),w_{\mu,x}(v)}^\circ, \quad \CT_{u,v}^{\mu,x} = \widetilde{\CB}_{w_{\mu,x}(u),w_{\mu,x}(v)}.
\end{align*}
See \eqref{eq:psi} for the notation.
\end{definition}

\subsection{Isomorphism theorem for Richardson varieties}\label{sec:4.2}

A priori, the isomorphism class of the variety $\CT_{u,v}^{\mu,x}$ (and $\CT_{u,v}^{\mu,x,\circ}$) depends on the choice of $\mu$ and $x$. In the rest of this section, we will show the independence of $\mu$ and $x$.

We need the following simple lemma. A product $w_1w_2 \in W_{\aff}$ is called \emph{length-additive} if $\ell(w_1w_2) = \ell(w_1) + \ell(w_2)$.

\begin{lemma}\label{lem:length-additive}
Let $w,w',y \in W_{\aff}$ with $w \le w'$. Assume that $\ell(w''y) = \ell(w'')+\ell(y)$ for any $w'' \in [w,w']$. Then, right multiplication by $y$ induces an isomorphism of posets between $[w,w']$ and $[wy,w'y]$.
\end{lemma}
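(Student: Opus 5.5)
The map $z\mapsto zy$ is clearly a bijection of $W_{\aff}$, and it restricts to an injection from $[w,w']$ into the set of elements of $W_{\aff}$. So the plan has three parts: show that $z\mapsto zy$ preserves and reflects Bruhat order on $[w,w']$, and show that its image is exactly $[wy,w'y]$. The tool is the subword property of Bruhat order together with the length-additivity hypothesis.

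First I fix reduced expressions $\underline{y}$ for $y$ and $\underline{w'}$ for $w'$. The hypothesis applied to $w''=w'$ says that the concatenation $\underline{w'}\,\underline{y}$ is a reduced expression for $w'y$. For any $w''\in[w,w']$, Bruhat order gives a reduced subexpression of $\underline{w'}$ representing $w''$. Appending $\underline y$ to it produces a subexpression of $\underline{w'}\,\underline y$ representing $w''y$, and this subexpression is reduced because $\ell(w''y)=\ell(w'')+\ell(y)$. Running the same argument with $w''_2$ in place of $w'$ shows: for $w''_1\le w''_2$ in $[w,w']$, we get $w''_1y\le w''_2y$. In particular the map $z\mapsto zy$ sends $[w,w']$ into $[wy,w'y]$ and is order-preserving.

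The main obstacle is the converse: reflecting the order, and surjectivity onto $[wy,w'y]$. For this I would use the standard lifting property of Bruhat order. The key fact is: if $a\le b$, $s$ is a simple reflection, and $bs<b$, then $as\le b$ and $\min(a,as)\le bs$.

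I would prove the following claim by induction on $\ell(y)$: if $zy\le w'y$ with $\ell(w'y)=\ell(w')+\ell(y)$, and the relevant length additivities hold, then $z\le w'$, and moreover $z$ has length-additive product with $y$.

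For the induction step, write $y=y's$ with $\ell(y')+1=\ell(y)$. Since $(w'y')s>w'y'$, the lifting property applied to $c=zy'$ or $c=zy's$ gives $zy'\le w'y'$ in either case, because $\min(zy',zy's)\le w'y'$ and $zy'\le\max$. One then iterates down to $y'=1$, giving $z\le w'$.

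For surjectivity, the same descent applies to any $c\in[wy,w'y]$. Set $z=cy^{-1}$. Peeling off simple reflections, the descent yields $z\le w'$, and it also yields $w\le z$, using the lower bound $wy\le c$. To run this last step I must know that $z$ satisfies length additivity with $y$. I would verify this by showing that each peeled simple reflection is a right descent of the current element. This follows from the length formula: $\ell(c)\ge\ell(wy)=\ell(w)+\ell(y)$ forces the descents along the way, possibly combined with the fact that $y^{-1}$ shortens every element of the interval.

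If this direct descent becomes delicate, my fallback is the standard characterization of Bruhat order by reflections. Covers $z\lessdot zt$ correspond to covers $zy\lessdot zt y=(yty^{-1})$-conjugates on the left. Length additivity on the whole interval then matches chains of covers on the two sides bijectively, and this gives the isomorphism of graded posets.
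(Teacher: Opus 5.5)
Your order-preservation step is fine; the subword-property argument is a valid substitute for the lifting property the paper uses. Reflecting the order on $[w,w']$ is also fine, but only because $\ell(zy)=\ell(z)+\ell(y)$ is already known there. Your induction claim as written is false. When $s$ is a right descent of $zy'$ (i.e.\ $zy=zy's<zy'$), the lifting property gives only $zy\le w'y'$ and $zy'\le w'y$, not $zy'\le w'y'$. For instance, with $w'=1$, $y=s$, $z=s$ you have $zy=1\le s=w'y$ but $z\not\le w'$. So the claim needs length additivity of $z$ as an \emph{input}, and it cannot also produce it as an output.

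The genuine gap is surjectivity onto $[wy,w'y]$. For $c$ in that interval you need the letters of $y$ to be successive right descents of $c$. You say this follows from $\ell(c)\ge\ell(wy)=\ell(w)+\ell(y)$, and that is false. In $W_0\subset W_{\aff}$ of type $A_2$, take $w=1$, $w'=s_1s_2$, $y=s_1$. Both endpoints satisfy length additivity, and $c=s_1s_2\in[s_1,s_1s_2s_1]$ has $\ell(c)\ge\ell(wy)$. Yet $cs_1>c$, and $c\notin[w,w']s_1$. The hypothesis on \emph{interior} elements of $[w,w']$ is what makes the lemma true, and your proposal never says how it enters. The phrase ``$y^{-1}$ shortens every element of the interval'' is either the hypothesis on $[w,w']y$, which says nothing about a $c$ not yet known to lie there, or it is the conclusion. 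Your fallback via matching covers has the same hole. An injective, rank-preserving order embedding between intervals of equal rank need not be onto, and a chain of covers starting at $wy$ could a priori leave $[w,w']y$.

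The missing step, which is the paper's argument, goes as follows. Reduce to $y=s$ simple by induction on $\ell(y)$. Suppose $c\in[ws,w's]$ had $cs>c$. Since $s$ is a right descent of $w's$, lifting gives $c\le w'$. Together with $w<ws\le c$, this puts $ws$ in $[w,w']$, while $\ell(ws\cdot s)<\ell(ws)$; that contradicts the hypothesis at $w''=ws$. Hence $cs<c$. Lifting applied to $c\le w's$ gives $cs\le w'$, and lifting applied to $ws\le c$ gives $w\le cs$. So $c=(cs)s$ is in the image. For the induction, the length-additivity hypothesis transfers to the new interval (with $y$ shortened by one letter) exactly because $[ws,w's]=[w,w']s$, which is the surjectivity just proved.
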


\begin{proof}
By induction on $\ell(y)$, it suffices to deal with the case where $y = s$ is a simple reflection.

The map given by right multiplication by $s$ preserves the Bruhat order: if $a \le b$ in $[w,w']$, the standard Bruhat lifting property ensures $as \le bs$. Thus, the map is a strict poset embedding of $[w,w']$ into $[ws, w's]$. To prove surjectivity, suppose there exists $a \in [ws, w's]$ that is not in the image. This means $a$ does not have $s$ as a right descent, so $\ell(as) = \ell(a)+1$. Because $a \le w's$ and $w's$ has $s$ as a right descent, the lifting property implies that $a \le w'$. Combined with the lower bound $ws \le a$, we obtain $w < ws \le a \le w'$. This implies the element $ws$ lies in the interval $[w,w']$. This contradicts the assumption that $\ell(w''s)=\ell(w'')+1$ for all $w'' \in [w,w']$.
\end{proof}

We first establish the following general result on isomorphisms of Richardson varieties. We state the result for affine Richardson varieties. The same argument applies to the flag varieties of symmetrizable Kac--Moody groups.

\begin{theorem}\label{thm:technical}
Let $w,w',y \in W_{\aff}$ with $w \le w'$. Assume that $\ell(w''y) = \ell(w'')+\ell(y)$ for any $w'' \in [w,w']$. Then there exists a natural $k$-isomorphism $\widetilde{\CB}_{w,w'} \cong \widetilde{\CB}_{wy,w'y}$ which restricts to a $k$-isomorphism $\widetilde{\CB}_{w_1,w_2}^{\circ} \cong \widetilde{\CB}_{w_1 y,w_2 y}^{\circ}$ for any $w \le w_1 \le w_2 \le w'$. 
\end{theorem}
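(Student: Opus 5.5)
Our plan is to reduce, exactly as in Lemma~\ref{lem:length-additive}, to the case where $y=s$ is a single simple reflection, and then compose. For general $y$ we fix a reduced expression $y=s_{i_1}\cdots s_{i_n}$ and set $y_j=s_{i_1}\cdots s_{i_j}$. Length-additivity of $w''y$ for all $w''\in[w,w']$ gives length-additivity of $w''y_j$, and of $(w''y_{j-1})s_{i_j}$. By Lemma~\ref{lem:length-additive}, $[wy_{j-1},w'y_{j-1}]=[w,w']y_{j-1}$. Hence the hypothesis holds at each step with $s=s_{i_j}$. Composing the isomorphisms obtained for single reflections gives the general statement, and compatibility with all strata is preserved under composition.

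For $y=s$ simple, let $\pi:\widetilde{\CB}\to \widetilde{\CB}^{s}=LG/\mathcal P_s$ be the $\mathbb P^1$-bundle to the partial flag variety of the minimal parahoric $\mathcal P_s=\CI\cup\CI\dot s\CI$. We would use the following standard facts about Bruhat cells; write $X^{\circ}_{a}=\CI\dot a\CI/\CI$ and $X^{a,\circ}=\CI^-\dot a\CI/\CI$.
\begin{enumerate}
\item If $as>a$, then $\pi$ maps both $X^\circ_a$ and $X^\circ_{as}$ isomorphically onto the same $\CI$-orbit $\CI\dot a\mathcal P_s/\mathcal P_s$. Indeed, the first is a section of $\pi$ and the second is the complement of that section in $\pi^{-1}$ of the orbit.
\item Similarly for $\CI^-$-orbits: $X^{a,\circ}\cup X^{as,\circ}=\pi^{-1}(\CI^-\dot a\mathcal P_s/\mathcal P_s)$. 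Here $X^{as,\circ}$ is the section, of codimension one more, and $X^{a,\circ}$ is its complement, an $\mathbb A^1$-bundle.
\end{enumerate}
We then define, for every $w\le w_1\le w_2\le w'$, the map $\widetilde{\CB}^\circ_{w_1,w_2}\to\widetilde{\CB}^\circ_{w_1s,w_2s}$ as the composite of $\pi$ with the inverse of $\pi$ restricted to $\widetilde{\CB}^\circ_{w_1s,w_2s}$. To make sense of this, we need $\pi$ to identify both open Richardson varieties with $(\CI^-\dot w_1\mathcal P_s\cap \CI\dot w_2\mathcal P_s)/\mathcal P_s$. For $\CI$-orbits this is fact (1). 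For $\CI^-$-orbits we must check that, over the image, $X^{w_1,\circ}$ and $X^{w_1s,\circ}$ meet $X^\circ_{w_2}$ resp.\ $X^\circ_{w_2s}$ in sections. This is where length-additivity for all elements of the interval is used. If the intersection fibre were a full $\mathbb A^1$, or if $X^{w_1s,\circ}\cap X^\circ_{w_2}\neq\emptyset$, we would get $w_1s\in[w,w']$ with $w_1s<w_1$, a contradiction. Dimension counts ($\ell(w_2)-\ell(w_1)$ on both sides) confirm that $\pi$ restricted to each open Richardson variety is bijective onto its image. Smoothness and reducedness (Richardson varieties in Kac--Moody flag varieties are reduced) upgrade this to isomorphisms.

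To globalize, we consider $\pi|_{\widetilde{\CB}_{w,w'}}$ and $\pi|_{\widetilde{\CB}_{ws,w's}}$. Both are proper and have image the projected Richardson variety $\pi(\widetilde{\CB}_{w,w'})=\pi(\widetilde{\CB}_{ws,w's})$. By the previous step, each restricts to a bijection on every stratum, hence is a bijective proper morphism. To conclude that each is an isomorphism onto its image we would invoke normality of the image. Projected Richardson varieties in Kac--Moody partial flag varieties are normal and Cohen--Macaulay, by Frobenius splitting and Knutson--Lam--Speyer type results extended to Kac--Moody. With normality, Zariski's main theorem applies, since a birational bijective finite morphism onto a normal variety is an isomorphism. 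The desired isomorphism is then $(\pi|_{\widetilde{\CB}_{ws,w's}})^{-1}\circ\pi|_{\widetilde{\CB}_{w,w'}}$, and by construction it matches the strata. We expect the main obstacle to be this last step, that is, proving that $\pi$ is a closed immersion on $\widetilde{\CB}_{w,w'}$ over an arbitrary field, including scheme-theoretic issues in positive characteristic. A fallback is to show directly that the differential of $\pi$ is injective along each stratum, using the $\CI\cap\CI^-$-equivariant local product structure near $T$-fixed points.
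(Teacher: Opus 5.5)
Your route differs from the paper's but constructs the same map. The paper handles a general $y$ at once. It decomposes $\widetilde{\CB}^\circ_{w_1y,w_2y}$ and $\widetilde{\CB}^\circ_{w_1,w_2}$ into Deodhar cells for $\ud{w_2}\ud y$ and $\ud{w_2}$. Lemma~\ref{lem:technical} shows that every distinguished subexpression for $w_1y$ is forced to end with $w_1,w_1y_1,\dots,w_1y$, so the projection $\pi_{w,w',y}$ matches cells one-to-one in explicit coordinates. The paper then concludes from seminormality of the target $\widetilde{\CB}_{w,w'}$. For $y=s$ that projection is exactly the inverse of your $(\pi|_{\widetilde{\CB}_{ws,w's}})^{-1}\circ\pi|_{\widetilde{\CB}_{w,w'}}$. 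Your reduction to $y=s$ is fine. The $\mathbb P^1$-bundle picture avoids Marsh--Rietsch coordinates and exhibits both varieties as isomorphic to a Richardson variety $\Pi$ in $LG/\mathcal P_s$. The paper's coordinates, by contrast, are reused later for total nonnegativity. As written, however, two steps need repair.

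First, your fact (1) is false for $X^\circ_{as}$. It is the complement of the section, an $\mathbb A^1$-bundle over $\CI\dot a\mathcal P_s/\mathcal P_s$, not isomorphic to it. So ``for $\CI$-orbits this is fact (1)'' only handles $\widetilde{\CB}^\circ_{w_1,w_2}$. Put $O=\CI\dot w_2\mathcal P_s/\mathcal P_s$ and $O^-=\CI^-\dot w_1\mathcal P_s/\mathcal P_s$, and use one section on each side. Then $\pi$ maps $X^\circ_{w_2}\cap\pi^{-1}(O^-)=\widetilde{\CB}^\circ_{w_1,w_2}\sqcup\widetilde{\CB}^\circ_{w_1s,w_2}$ isomorphically onto $O\cap O^-$. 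It also maps $X^{w_1s,\circ}\cap\pi^{-1}(O)=\widetilde{\CB}^\circ_{w_1s,w_2}\sqcup\widetilde{\CB}^\circ_{w_1s,w_2s}$ isomorphically onto $O\cap O^-$. The only input needed is $\widetilde{\CB}^\circ_{w_1s,w_2}=\emptyset$, i.e.\ $w_1s\not\le w_2$. Otherwise $w_1s\in[w,w']$ would have $s$ as a right descent; that, not ``$w_1s<w_1$'', is the contradiction. This gives isomorphisms on strata directly. Your appeal to smoothness to upgrade bijections is not valid in characteristic $p$, since a bijective morphism of smooth varieties can be Frobenius.

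Second, and more seriously, the global step rests on normality of $\Pi$ over an arbitrary field, for which you give no usable reference. The paper deliberately uses only seminormality, which in positive characteristic comes from Frobenius splitting. Normality is not needed. A finite, birational, bijective morphism from an integral variety that induces isomorphisms of residue fields onto a seminormal variety is already an isomorphism.

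To get seminormality of $\Pi$, use that $ws>w$ and $w's>w'$. With $X^w=\overline{\CI^-\dot w\CI/\CI}$ and $X_{w'}=\overline{\CI\dot w'\CI/\CI}$, this gives $X^w=\pi^{-1}(\pi(X^w))$ and $\pi^{-1}(\pi(X_{w'}))=X_{w's}$. Hence $\Pi=\pi(X^w)\cap\pi(X_{w'})$ and $\pi^{-1}(\Pi)=X^w\cap X_{w's}=\widetilde{\CB}_{w,w's}$. Reducedness and seminormality then descend along the smooth surjection $\widetilde{\CB}_{w,w's}\to\Pi$ from the Richardson variety $\widetilde{\CB}_{w,w's}$, which is exactly the input the paper cites.

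You should also record that distinct pairs $(w_1,w_2)$ have disjoint images in $LG/\mathcal P_s$, because each $w_1$ is the shorter element of $\{w_1,w_1s\}$. Your global bijectivity tacitly uses this. With these changes your argument goes through.
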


We first recall the Deodhar parameterizations and prove a truncation lemma.

\begin{definition}[{\cite[Definition 2.3]{Deod85}, \cite[Definition~3.4]{MR04}}]
Let $w,x \in W_{\aff}$.

\begin{enumerate}
\item Let $\ud w = s_{i_1} s_{i_2} \cdots s_{i_r}$ be a reduced expression of $w$. A \emph{subexpression} for $x$ in $\ud w$ is a sequence $\ud x = (x_{(0)} = 1, x_{(1)}, x_{(2)}, \dots, x_{(r)} = x)$ such that $x_{(j)} \in \bigl\{x_{(j-1)},\, x_{(j-1)} s_{i_j}\bigr\}$ for all $j$.

\item A subexpression $\ud x$ as in (1) is called \emph{distinguished} if $x_{(j)} \leq x_{(j-1)} s_{i_j}$ for all $j$. In particular, if $x_{(j)} = x_{(j-1)}$, we must have $x_{(j-1)}s_{i_j} > x_{(j-1)}$.

\item The set of distinguished subexpressions for $x$ in $\ud{w}$ is denoted by $\mathrm{Dist}_{x,\ud{w}}$.
\item A subexpression $\ud x$ as in (1) is called \emph{positive distinguished} if $x_{(j-1)} \le x_{(j)} \le x_{(j-1)} s_{i_j}$ for all $j$.
\end{enumerate}   
\end{definition}

Let $\ud w = s_{i_1} s_{i_2} \cdots s_{i_r}$ be a reduced expression of $w \in W_{\aff}$. Let $\ud x = (x_{(0)}=1, x_{(1)}, \ldots,x_{(r)}=x)$ be a distinguished subexpression for some $x \le w$. Define
\begin{align*}
    J_{\ud x}^+ = \{j\mid x_{(j-1)} < x_{(j)}\},\\
    J_{\ud x}^{\circ} = \{j\mid x_{(j-1)} = x_{(j)}\},\\
    J_{\ud x}^- = \{j\mid x_{(j-1)} > x_{(j)}\}.
\end{align*}

Recall that we have fixed $\tilde x_{i}, \tilde y_i$, $i \in \tilde I$ in \S\ref{sec:4.1}. The following result is proved in \cite[Proposition 5.2]{MR04} for reductive groups. The same argument works for loop groups.

\begin{theorem}\label{thm:MR}
The open Richardson variety $\widetilde{\CB}^\circ_{x,w}$
admits a decomposition into Deodhar cells $\mathcal{D}_{\ud x,\ud w}$
\begin{align*}
\widetilde{\CB}^\circ_{x,w}
=
\bigsqcup_{\ud x \in \mathrm{Dist}_{x,\ud w}}
\mathcal{D}_{\ud x,\ud w},
\quad
\text{where }
\mathcal{D}_{\ud x,\ud w}
\cong
\mathbb G_{m,k}^{|J_{\ud x}^\circ|}
\times
\mathbb A_k^{|J_{\ud x}^-|}.
\end{align*}
Each Deodhar cell $\mathcal{D}_{\ud x,\ud w}$ is parametrized as follows:

\begin{align*}
\mathcal{D}_{\underline{x},\underline{w}} := \left\{
g\CI = g_1g_2\cdots g_r \CI \,\middle|\,
\begin{aligned}
g_j &= \dot{s}_{i_j}               && \text{if } j \in J_{\underline{x}}^+, \\
g_j &= \tilde y_{i_j}(p_j) && \text{if } j \in J_{\underline{x}}^\circ, \\
g_j &= \tilde x_{i_j}(m_j)\dot{s}_{i_j}^{-1} && \text{if } j \in J_{\underline{x}}^-.
\end{aligned}
\right\},
\end{align*}
where $p_j\in k^*$ and $m_j\in k$ are parameters.
\end{theorem}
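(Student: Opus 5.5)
We would follow the argument of \cite[Proposition 5.2]{MR04}, replacing the reductive-group inputs by their Kac--Moody analogues. This is legitimate through the identification $\widetilde{\CB}\cong\widehat G/\widehat B^+$ recalled in \S\ref{sec:4.1}. The inputs needed are the following.
\begin{enumerate}[(a)]
\item The Bruhat decomposition $\widetilde{\CB}=\bigsqcup_{w}\CI\dot w\CI/\CI$ and the Birkhoff decomposition $\widetilde{\CB}=\bigsqcup_{x}\CI^-\dot x\CI/\CI$ (Kac--Peterson, Tits).
\item The Bott--Samelson factorization. For a reduced expression $\ud w=s_{i_1}\cdots s_{i_r}$, each point of $\CI\dot w\CI/\CI$ can be written uniquely as $g_1\cdots g_r\CI$ with $g_j\in\{\tilde y_{i_j}(p)\,:\,p\in k\}$, using $\CI\dot s_i\CI/\CI=\tilde y_i(k)\CI/\CI$. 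Equivalently, it is the prefix sequence $g_1\cdots g_j\CI\in\CI\dot w_{(j)}\CI/\CI$, where $w_{(j)}=s_{i_1}\cdots s_{i_j}$.
\item The rank-one Birkhoff multiplication rule. Let $h\CI\in\CI^-\dot y\CI/\CI$ and $a\in k$. If $ys_i>y$, then $h\tilde y_i(a)\CI$ lies in $\CI^-\dot y\dot s_i\CI$ when $a=0$, and in $\CI^-\dot y\CI$ when $a\neq 0$. If $ys_i<y$, then $h\tilde y_i(a)\CI$ lies in $\CI^-\dot y\dot s_i\CI$ for all $a$.
\end{enumerate}

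Fact (c) is the heart of the matter. We would prove it as follows. Write $h=b^-\dot y c$ with $c\in\CI$. Then $c\,\tilde y_i(a)\CI$ lies in the $\varphi_i(\mathrm{SL}_2)$-orbit $P_i/\CI\cong\mathbb P^1$. The claim therefore reduces to a computation inside $\CI^-\dot y P_i$, using that $\dot y\,U_{\pm\tilde\alpha_i}\,\dot y^{-1}\subset\CI^-$ or $\CI$ according to the sign of $y(\tilde\alpha_i)$. This is exactly where the inequalities $ys_i\gtrless y$ enter. The sign convention for which of $\tilde x_i,\tilde y_i$ lies in $\CI$ must be matched with the parametrization in the statement. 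In particular, in the case $ys_i<y$ the element is rewritten as $\tilde x_i(m)\dot s_i^{-1}$, and in the ``stay'' case the nonzero parameter becomes $p\in k^*$.

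With (a)--(c) in hand, the proof proceeds as follows. Fix $g\CI\in\widetilde{\CB}^\circ_{x,w}$ and write it as in (b). Define $x_{(j)}$ by requiring $g_1\cdots g_j\CI\in\CI^-\dot x_{(j)}\CI/\CI$. Rule (c) shows that $x_{(j)}\in\{x_{(j-1)},x_{(j-1)}s_{i_j}\}$, and that ``staying'' only happens when $x_{(j-1)}s_{i_j}>x_{(j-1)}$. Hence $\ud x$ is a distinguished subexpression with $x_{(r)}=x$. This produces the disjoint decomposition indexed by $\mathrm{Dist}_{x,\ud w}$.

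Next, for a fixed $\ud x$, we run (c) in the reverse direction, inductively in $j$, to rewrite each factor in the normal form of the statement:
\begin{itemize}
\item $g_j=\dot s_{i_j}$ on $J^+_{\ud x}$ (forced $a=0$);
\item $g_j=\tilde y_{i_j}(p_j)$ with $p_j\neq0$ on $J^\circ_{\ud x}$;
\item $g_j=\tilde x_{i_j}(m_j)\dot s_{i_j}^{-1}$ with $m_j$ free on $J^-_{\ud x}$.
\end{itemize}
Each rewriting changes $g_1\cdots g_j$ only by right multiplication by an element of $\CI$ that is then absorbed into later factors. Finally, we show that the map $\mathbb G_m^{|J^\circ|}\times\mathbb A^{|J^-|}\to\mathcal D_{\ud x,\ud w}$ is an isomorphism onto a locally closed subvariety. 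This follows because the Bott--Samelson parametrization in (b) is an isomorphism $\mathbb A^r\cong\CI\dot w\CI/\CI$, and the change of coordinates to $(p_j,m_j)$ is given by triangular, invertible polynomial formulas. The formulas are invertible because each rewriting step is an explicit $\mathrm{SL}_2$ identity such as $\tilde y(a)=\tilde x(a^{-1})\,\dot s^{-1}\cdots$.

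We expect the main obstacle to be making (c) and the absorption step rigorous in the ind-scheme setting over an arbitrary field $k$. Concretely, one must know that the projections onto the $\CI^-$-orbits behave as in the finite case, that all relevant conjugates land in $\CI$ or $\CI^-$, and that everything happens inside the finite-dimensional Schubert variety $\widetilde{\CB}_w$, so that scheme-theoretic isomorphism statements make sense. We would handle this by working throughout inside $\CI\dot w\CI/\CI\cong\mathbb A^{\ell(w)}$ and invoking the Kac--Moody refined Tits system $(\widehat G,\widehat B^\pm,N)$, whose axioms yield (c) verbatim as in \cite{MR04}.
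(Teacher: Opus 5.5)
The paper proves this statement only by citing \cite[Proposition~5.2]{MR04} and asserting that the same argument works for loop groups, so transporting the Marsh--Rietsch argument is the intended route. However, your two inputs (b) and (c) use the wrong one-parameter subgroup, and as written they break the argument. Every $\tilde y_i(p)$, $i\in\tilde I$, lies in the root subgroup of $-\tilde\alpha_i$, which is contained in $\CI^-$ (for $i=0$ this root is $\theta-\delta$). Hence every product $\tilde y_{i_1}(p_1)\cdots\tilde y_{i_r}(p_r)\CI$ lies in $\CI^-\CI/\CI$. So the factorization in (b) never reaches $\widetilde{\CB}^\circ_{x,w}$ for $x\neq1$; for example, $\dot w\CI\in\widetilde{\CB}^\circ_{w,w}$ is not of this form when $w\neq1$. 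The sequence $x_{(j)}$ you extract from it is therefore identically $1$. The correct rank-one fact is $\CI\dot s_i\CI/\CI=\tilde x_i(k)\dot s_i\CI/\CI=\tilde y_i(k^\times)\CI/\CI\sqcup\{\dot s_i\CI\}$; the base point $\tilde y_i(0)\CI$ is not in the cell.

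For the same reason (c) is false at $a=0$: $h\tilde y_i(0)\CI=h\CI$ stays in $\CI^-\dot y\CI/\CI$ both when $ys_i>y$ and when $ys_i<y$. If $p_j=0$ is allowed, you can ``stay'' even when $x_{(j-1)}s_{i_j}<x_{(j-1)}$, so distinguishedness does not follow. Moreover, (c) cannot be stated for an arbitrary representative $h=b^-\dot y c$. The factor $c\in\CI$ acts nontrivially on $P_i/\CI\cong\mathbb P^1$, so the parameter value that produces the jump to $\CI^-\dot y\dot s_i\CI$ depends on $c$. A rule in terms of a fixed value such as $a=0$ holds only for normalized representatives $h\in\CI^-\dot y$.

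The repair is exactly the Marsh--Rietsch argument. For $h\in\CI^-\dot y$, the points in relative position $s_i$ from $h\CI$ are $h\tilde x_i(a)\dot s_i\CI$, $a\in k$.
\begin{enumerate}
\item If $ys_i<y$, then $y(\tilde\alpha_i)<0$, so $\dot y\tilde x_i(a)\dot y^{-1}\in\CI^-$, and all these points lie in $\CI^-\dot y\dot s_i\CI/\CI$.
\item If $ys_i>y$, then $a=0$ gives $\CI^-\dot y\dot s_i\CI/\CI$. For $a\neq0$ one has $\tilde x_i(a)\dot s_i\CI=\tilde y_i(a^{-1})\CI$ and $\dot y\tilde y_i(a^{-1})\dot y^{-1}\in\CI^-$, giving $\CI^-\dot y\CI/\CI$.
\end{enumerate}
In place of your absorption bookkeeping, carry the invariant $g_1\cdots g_j\in\CI^-\dot x_{(j)}$ for $g_j$ in the normal form of the statement. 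This invariant follows from three facts:
\begin{enumerate}
\item $\dot x_{(j-1)}\tilde y_{i_j}(p)\dot x_{(j-1)}^{-1}\in\CI^-$ for $j\in J^\circ_{\ud x}$;
\item $\dot x_{(j-1)}\tilde x_{i_j}(m)\dot x_{(j-1)}^{-1}\in\CI^-$ for $j\in J^-_{\ud x}$;
\item $\dot x_{(j-1)}\dot s_{i_j}=\dot x_{(j)}$ for $j\in J^+_{\ud x}$, and $\dot x_{(j-1)}\dot s_{i_j}^{-1}=\dot x_{(j)}$ for $j\in J^-_{\ud x}$. The second identity is why $\dot s_{i_j}^{-1}$ appears in the statement.
\end{enumerate}
Combine this with the uniqueness of the chain of flags in successive relative positions $s_{i_1},\dots,s_{i_r}$. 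Equivalently, use the correct Bott--Samelson chart $\mathbb A^r\cong\CI\dot w\CI/\CI$, $(a_j)\mapsto\tilde x_{i_1}(a_1)\dot s_{i_1}\cdots\tilde x_{i_r}(a_r)\dot s_{i_r}\CI$. This yields both the disjoint decomposition indexed by $\mathrm{Dist}_{x,\ud w}$ and the isomorphism $\mathbb G_m^{|J^\circ_{\ud x}|}\times\mathbb A^{|J^-_{\ud x}|}\cong\mathcal D_{\ud x,\ud w}$.
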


We need the following lemma.
\begin{lemma}\label{lem:technical}
Let $w,w',y \in W_{\aff}$ with $w \le w'$. Assume that $\ell(w''y) = \ell(w'')+\ell(y)$ for any $w'' \in [w,w']$. Fix reduced expressions $\ud {w'}$ of $w'$ and $\ud y$ of $y$. Set $N=\ell(w')$ and $r=\ell(y)$. Then, the truncation map
\begin{align*}
\G_y: \mathrm{Dist}_{wy,\ud{w'}\ud y} \to \mathrm{Dist}_{w,\ud{w'}}, \quad
(x_{(0)},x_{(1)},\ldots,x_{(N)},\ldots,x_{(N+r)}) \mapsto (x_{(0)},x_{(1)},\ldots,x_{(N)})
\end{align*}
is a bijection.



\end{lemma}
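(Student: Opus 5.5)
The plan is to show that in any distinguished subexpression for $wy$ in $\ud{w'}\ud y$, the last $r$ steps are forced to be upward steps $x_{(N+j)}=x_{(N+j-1)}s_{i_{N+j}}$. This gives $x_{(N)}=w$. From this the bijectivity follows by writing down the obvious inverse. The only part of the hypothesis I expect to need is the length-additivity $\ell(wy)=\ell(w)+\ell(y)$ at the single element $w''=w\in[w,w']$.

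\emph{Forcing step (well-definedness).} I argue by induction on $r=\ell(y)$; the case $r=0$ is trivial.

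Write $y=y's$, where $s=s_{i_{N+r}}$ is the last letter of $\ud y$ and $\ell(y')=r-1$. Let $\ud x\in\mathrm{Dist}_{wy,\ud{w'}\ud y}$ and set $z=x_{(N+r-1)}$. Then $wy\in\{z,zs\}$. Suppose $wy=z$. The distinguished condition then requires $zs>z$, i.e.\ $wys>wy$. But $wys=wy'$, and
\[
\ell(wy')\le\ell(w)+\ell(y')=\ell(wy)-1,
\]
so $wys<wy$, a contradiction. Hence $wy=zs$, that is, $z=wy'$, and step $N+r$ is an upward step.

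The truncation $(x_{(0)},\dots,x_{(N+r-1)})$ is a distinguished subexpression for $wy'$ in the reduced word $\ud{w'}\ud{y'}$, since the distinguished condition is stepwise. Moreover the hypothesis passes to $y'$. Indeed,
\[
\ell(wy)=\ell(wy's)\le \ell(wy')+1\le \ell(w)+\ell(y')+1=\ell(wy)
\]
forces $\ell(wy')=\ell(w)+\ell(y')$. Note that $\ud{w'}\ud y$ is a reduced word of $w'y$ by the hypothesis applied to $w''=w'$.

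By induction, $x_{(N)}=w$ and $(x_{(0)},\dots,x_{(N)})\in\mathrm{Dist}_{w,\ud{w'}}$. So $\G_y$ is well defined, and the tail of every element of the source is forced to be
\[
x_{(N+j)}=w\,s_{i_{N+1}}\cdots s_{i_{N+j}}, \qquad 0\le j\le r.
\]

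\emph{Injectivity and surjectivity.} Injectivity is immediate, because the tail is determined by the forcing step.

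For surjectivity, start from $\ud x\in\mathrm{Dist}_{w,\ud{w'}}$ and extend it by the tail $x_{(N+j)}=w\,s_{i_{N+1}}\cdots s_{i_{N+j}}$. Length-additivity of $wy$ implies length-additivity for every prefix of $\ud y$. Hence each new step is a strict increase $x_{(N+j-1)}<x_{(N+j)}=x_{(N+j-1)}s_{i_{N+j}}$. The distinguished condition $x_{(j)}\le x_{(j-1)}s_{i_j}$ holds with equality at each such step. The extended sequence therefore lies in $\mathrm{Dist}_{wy,\ud{w'}\ud y}$ and maps to $\ud x$.

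The main point to handle carefully is the forcing step, specifically excluding the "stay" option at the final positions via the length computation above. Everything else is bookkeeping.
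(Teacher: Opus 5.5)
Your proof is correct and is essentially the paper's argument. The paper also excludes the ``stay'' option at each tail position using $wy_{j-1}<wy_j$, which comes from length-additivity at $w''=w$. It phrases this as a descending induction on $j$ with $y$ fixed, where you induct on $\ell(y)$ by peeling off the last letter, and it likewise inverts the truncation by appending the forced upward tail.
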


\begin{proof}

Write $\ud y=s_{i_1}s_{i_2}\cdots s_{i_r}$. Let $
y_j=s_{i_1}s_{i_2}\cdots s_{i_j}$ for $0\leq j\leq r$, where \(y_0=1\) and \(y_r=y\). For every \(1\leq j\leq r\),
\begin{equation}\label{eq:technical-right-descent}
wy_js_{i_j}=wy_{j-1}<wy_j.
\end{equation}

Let $\ud{x} =(x_{(0)},x_{(1)},\ldots,x_{(N+r)})$
be a distinguished subexpression for \(wy\) in
\(\ud{w'}\ud y\). We claim that
\begin{equation}\label{eq:technical-forced-tail}
x_{(N+j)}=wy_j
\qquad (0\leq j\leq r).
\end{equation}
We prove this by descending induction on \(j\). The assertion for
\(j=r\) is clear. Suppose that \(j\geq 1\) and
\(x_{(N+j)}=wy_j\). By the definition of a subexpression, either $x_{(N+j)}=x_{(N+j-1)}$ or $x_{(N+j)}=x_{(N+j-1)}s_{i_j}$. The first possibility cannot occur. Indeed, distinguishedness would
then imply $wy_j=x_{(N+j)} = x_{(N+j-1)} \leq x_{(N+j-1)}s_{i_j} =wy_js_{i_j}$,
contradicting \eqref{eq:technical-right-descent}. Therefore, the
second possibility holds, and hence $x_{(N+j-1)}
=x_{(N+j)}s_{i_j}=wy_js_{i_j}
=wy_{j-1}$.
This proves \eqref{eq:technical-forced-tail}. In particular, \(x_{(N)}=w\). So $(x_{(0)},x_{(1)},\ldots,x_{(N)})$
is a distinguished subexpression for $w$ in $\ud{w'}$.

Let $(z_{(0)}, z_{(1)},\ldots, z_{(N)})$ be a distinguished subexpression for $w$ in $\ud{w'}$. By \eqref{eq:technical-right-descent}, $(z_{(0)}, z_{(1)},\ldots, z_{(N)}=w,wy_1,wy_2,\ldots,wy_r=wy)$ is a distinguished subexpression for $wy$ in $\ud{w'}\ud{y}$. This proves that the truncation map is bijective.
\end{proof}

\begin{proof}[Proof of Theorem \ref{thm:technical}]

By Lemma \ref{lem:length-additive}, right multiplication by $y$ induces an isomorphism of posets $[w,w']\cong [wy,w'y]$. Let $\pi_{w,w',y}$ be the natural projection
\begin{align*}
\bigsqcup_{x \in[wy,w'y]}\CI \dot{x} \CI / \CI
\cong
\bigl(\bigsqcup_{w'' \in[w,w']}\CI \dot{w}'' \CI\bigr)
\times^{\CI}
\CI \dot{y} \CI/ \CI
\longrightarrow
\bigsqcup_{w''\in[w,w']}\CI \dot{w}'' \CI / \CI.
\end{align*}

We claim that for any $w \le w_1 \le w_2 \le w'$,
\begin{align}\label{eq:3.3}
\pi_{w,w',y} \text{ induces a bijection }\widetilde{\CB}_{w_1y,w_2y}^{\circ} \longrightarrow \widetilde{\CB}_{w_1,w_2}^{\circ}.
\end{align}
Fix $w_1,w_2$ such that $w\leq w_1\leq w_2\leq w'$, a reduced expression $\ud{w_2}$ of $w_2$, and a reduced expression $\ud y$ of $y$. By Theorem \ref{thm:MR}, we have
\begin{align*}
\widetilde{\CB}_{w_1y,w_2y}^{\circ}
=
\bigsqcup_{\ud x \in \mathrm{Dist}_{w_1y,\ud{w_2}\ud y}}
\CD_{\ud x, \ud {w_2}\ud{y}},
\quad
\widetilde{\CB}_{w_1,w_2}^{\circ}
=
\bigsqcup_{\ud z \in \mathrm{Dist}_{w_1,\ud{w_2}}}
\CD_{\ud z, \ud {w_2}}.
\end{align*}
By Lemma \ref{lem:technical}, the truncation map $\G_y:\mathrm{Dist}_{w_1y,\ud{w_2}\ud y} \to \mathrm{Dist}_{w_1,\ud{w_2}}$ is a bijection. Using the explicit parameterization of these Deodhar pieces (cf. Theorem \ref{thm:MR}), we see that $\pi_{w,w',y}$ induces a bijection 
\begin{align*}
\CD_{\ud x, \ud {w_2}\ud{y}}
\xrightarrow{\sim}
\CD_{\G_y(\ud x), \ud {w_2}}.
\end{align*}
This proves the claim.

As a consequence, we see that $\pi_{w,w',y}$ induces a bijection $\widetilde{\CB}_{wy,w'y} \to \widetilde{\CB}_{w,w'}$. The target $\widetilde{\CB}_{w,w'}$ is seminormal by \cite[Proposition~6.6]{Kum17} in characteristic zero and by \cite[Proposition~5.3]{KS14} and \cite[Proposition~5.31]{HR76} in positive characteristic. Therefore, $\pi_{w,w',y}$ induces a $k$-isomorphism $\widetilde{\CB}_{wy,w'y} \xrightarrow{\sim} \widetilde{\CB}_{w,w'}$ which restricts to $k$-isomorphisms $\widetilde{\CB}_{w_1y,w_2y}^{\circ}\xrightarrow{\sim}\widetilde{\CB}_{w_1,w_2}^{\circ}$ for any $w\leq w_1\leq w_2\leq w'$.
\end{proof}

\subsection{Independence and geometric properties}
\label{sec:4.3}

We now compare the affine models and establish independence of the choices in their construction.

\begin{proposition}\label{prop:well-def}
Let $u,v\in W_0$. Let $x,x'\in W_0$ satisfy $u\leq_x v$ and $u\leq_{x'}v$, and let $\mu,\mu'$ be sufficiently dominant regular cocharacters. Then there is a $k$-isomorphism
\[
\CT_{u,v}^{\mu,x}\xrightarrow{\sim}\CT_{u,v}^{\mu',x'}
\]
which, for every $[u',v']_{\tilt}\subseteq [u,v]_{\tilt}$, restricts to a $k$-isomorphism
\[
\CT_{u',v'}^{\mu,x,\circ}\xrightarrow{\sim}
\CT_{u',v'}^{\mu',x',\circ}.
\]
\end{proposition}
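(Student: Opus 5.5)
The plan is to reduce everything to Theorem~\ref{thm:technical}: the two models are related by right multiplication by a single element $y\in W_{\aff}$. Since all choices enter only through the translation part and the right-hand factor $x^{-1}$, I expect to find such a $y$ directly.

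First I record a normal form for the elements of the interval. Let $u\le_x v$ and $z\in[u,v]_{\tilt}$. By Lemma~\ref{lem:tilted-interval}(1) we have $u\le_x z$, hence $\wt(x,z)=\wt(x,u)+\wt(u,z)$. Therefore
\[
w_{\mu,x}(z)=z\,t^{-(\lambda_{\mu,x}+\wt(u,z))}x^{-1},\qquad \lambda_{\mu,x}:=\mu+\wt(x,u).
\]
By Proposition~\ref{prop:realization}, every element of $[w_{\mu,x}(u),w_{\mu,x}(v)]$ is of this form for a unique $z\in[u,v]_{\tilt}$.

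Next I handle the main comparison. Suppose $\nu:=\lambda_{\mu',x'}-\lambda_{\mu,x}$ is dominant regular, and set $y:=x\,t^{-\nu}x'^{-1}$. Then for every $z\in[u,v]_{\tilt}$,
\[
w_{\mu,x}(z)\,y=z\,t^{-(\lambda_{\mu,x}+\wt(u,z)+\nu)}x'^{-1}=w_{\mu',x'}(z).
\]
All translation parts occurring here are dominant regular by \eqref{eq:sufficiently}, and $\nu$ is dominant regular by assumption. So \eqref{eq:len2} computes all three lengths:
\begin{align*}
\ell(w_{\mu,x}(z)) &= \ell(z)+\langle\lambda_{\mu,x}+\wt(u,z),2\rho\rangle-\ell(x),\\
\ell(y) &= \ell(x)+\langle\nu,2\rho\rangle-\ell(x'),\\
\ell(w_{\mu',x'}(z)) &= \ell(z)+\langle\lambda_{\mu',x'}+\wt(u,z),2\rho\rangle-\ell(x').
\end{align*}
The first two sum to the third. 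Hence the hypothesis of Theorem~\ref{thm:technical} holds for every $w''\in[w_{\mu,x}(u),w_{\mu,x}(v)]$.

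Theorem~\ref{thm:technical} then gives an isomorphism $\CT^{\mu',x'}_{u,v}\cong\CT^{\mu,x}_{u,v}$, induced by the projection $\pi$, and its inverse is the desired map. It restricts to isomorphisms $\widetilde{\CB}^\circ_{w_1y,w_2y}\cong\widetilde{\CB}^\circ_{w_1,w_2}$. Take $[u',v']_{\tilt}\subseteq[u,v]_{\tilt}$. Then $u\le_x u'\le_x v'$, so $\wt(x,u')=\wt(x,u)+\wt(u,u')$; this is exactly what makes the normal form above agree with Definition~\ref{def:tilted-richardson} for the pair $(u',v')$. Consequently $w_{\mu,x}(u')$ and $w_{\mu,x}(v')$ are sent by right multiplication by $y$ to $w_{\mu',x'}(u')$ and $w_{\mu',x'}(v')$. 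So the restriction is precisely $\CT^{\mu,x,\circ}_{u',v'}\cong\CT^{\mu',x',\circ}_{u',v'}$, and the labeled strata match.

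The only obstacle I anticipate is that $\nu$ need not be dominant regular for arbitrary $(\mu,x)$ and $(\mu',x')$. I would handle this by passing through an auxiliary cocharacter $\mu'':=\mu+\mu'$, chosen so that $\mu''-\mu-\wt(x',u)+\wt(x,u)$ and $\mu''-\mu'$ are both dominant regular. This is possible because $\mu,\mu'$ are sufficiently dominant regular and, by Lemma~\ref{lem:QBG2}(4), the weights $\wt(\cdot,\cdot)$ are bounded. Note that $\mu''$ is itself sufficiently dominant regular. I then apply the comparison step twice:
\begin{itemize}
\item from $(\mu,x)$ to $(\mu'',x')$, where $\nu=\mu''-\mu+\wt(x',u)-\wt(x,u)$;
\item from $(\mu',x')$ to $(\mu'',x')$, where $\nu=\mu''-\mu'$.
\end{itemize}
Composing the first isomorphism with the inverse of the second gives the claim. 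The step I would check most carefully is the length bookkeeping via \eqref{eq:len2}, in particular that each relevant translation part is dominant regular, so that the presentations are unique and the formula applies.
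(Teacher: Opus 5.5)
Your proposal is correct and is essentially the paper's argument. The paper also joins the two realizations by a length-additive right multiplication by an element of your form $x\,t^{-\nu}x'^{-1}$ (namely $w^{\mathrm{III}}_{\lambda,u}(x)=x\,t^{-(\lambda-\wt(x,u))}u^{-1}$), checks additivity with \eqref{eq:len2}, applies Theorem~\ref{thm:technical}, and composes through a common model.

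The one difference is the choice of common model, and it affects how easily the positivity of $\nu$ follows. The paper routes through $(\mu+\lambda,u)$, so $\nu=\lambda-\wt(x,u)$ contains a single weight and is dominant regular directly by \eqref{eq:sufficiently}. Your target $(\mu'',x')$ puts the difference $\wt(x',u)-\wt(x,u)$ into $\nu$. With $\mu''=\mu+\mu'$, the termwise bound from Lemma~\ref{lem:QBG2}(4) only gives $\langle\nu,\alpha\rangle\ge 3-3\ell(w_0)$, which does not establish dominant regularity. You can fix this either by enlarging $\mu''$ (e.g.\ $\mu''=2\mu+\mu'$) or by using the sharper fact that every $\wt(a,b)$ is bounded coefficientwise by $\wt(w_0,1)$.
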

\begin{proof}
Take sufficiently dominant regular cocharacters $\l,\l'$ such that $\mu+\l = \mu' + \l'$. By Proposition \ref{prop:realization}, we have isomorphisms of posets $[u, v]_{\tilt} \cong [w_{\mu,x}(u),w_{\mu,x}(v)]$ and $[u, v]_{\tilt} \cong [w_{\mu',x'}(u),w_{\mu',x'}(v)]$.

For any $z \in [u,v]_{\tilt}$, we have a length-additive product $w_{\mu,x}(z) w_{\l,u}^{\mathrm{III}}(x) = w_{\mu+\l, u}(z)$ and a length-additive product $w_{\mu',x'}(z) w_{\l',u}^{\mathrm{III}}(x') = w_{\mu'+\l', u}(z)$ (see \eqref{eq:other-realizations} for the type $\mathrm{III}$ realization). Indeed, this can be verified directly by the length formula \eqref{eq:len2}. By Theorem \ref{thm:technical}, we have isomorphisms of varieties $\widetilde{\CB}_{w_{\mu,x}(u),w_{\mu,x}(v)} \cong \widetilde{\CB}_{w_{\mu+\l,u}(u),w_{\mu+\l,u}(v)}$ and $\widetilde{\CB}_{w_{\mu',x'}(u),w_{\mu',x'}(v)} \cong \widetilde{\CB}_{w_{\mu'+\l',u}(u),w_{\mu'+\l',u}(v)}$. The composition gives rise to the desired $k$-isomorphism
\begin{align*}
\widetilde{\CB}_{w_{\mu,x}(u),w_{\mu,x}(v)} \cong \widetilde{\CB}_{w_{\mu',x'}(u),w_{\mu',x'}(v)}.    
\end{align*}\end{proof}

Our affine models have the following geometric properties.

\begin{theorem}\label{thm:tilted-Richardson}
Let $u,v \in W_0$. Let $x \in W_0$ be such that $u \le_x v$ and let $\mu$ be a sufficiently dominant regular cocharacter. We have
\begin{enumerate}
\item $\mathcal{T}_{u,v}^{\mu,x}$ is a projective closed
subvariety of $\widetilde{\CB}$, and
$\mathcal{T}_{u,v}^{\mu,x,\circ}\subseteq\mathcal{T}_{u,v}^{\mu,x}$
is an open subvariety.

\item We have a stratification
$$\mathcal{T}_{u,v}^{\mu,x} = \bigsqcup_{[u',v']_{\mathrm{tilt}} \subset [u,v]_{\mathrm{tilt}}} \mathcal{T}_{u',v'}^{\mu,x,\circ}.$$

\item The dimensions of $\mathcal{T}_{u,v}^{\mu,x}$ and $\mathcal{T}_{u,v}^{\mu,x,\circ}$ are equal to $\ell(u,v)$, the length of any shortest path from $u$ to $v$ in the quantum Bruhat graph.

\item Both $\mathcal{T}_{u,v}^{\mu,x}$ and $\mathcal{T}_{u,v}^{\mu,x,\circ}$ are irreducible varieties.

\end{enumerate}
\end{theorem}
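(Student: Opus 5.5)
The plan is to reduce every assertion to the corresponding standard fact about affine Richardson varieties recalled in \S\ref{sec:4.1}, transported along the poset isomorphism of Proposition~\ref{prop:realization}. Write $w=w_{\mu,x}(u)$ and $w'=w_{\mu,x}(v)$. Since $u\le_x v$, Proposition~\ref{prop:realization} gives $w\le w'$, and it identifies $([u,v]_{\tilt},\le_{u,v})$ with the Bruhat interval $[w,w']$ via $z\mapsto w_{\mu,x}(z)$.

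For (1), $\widetilde{\CB}_{w,w'}$ is closed in the Schubert variety $\overline{\CI\dot w'\CI/\CI}$. This Schubert variety is a finite-dimensional projective variety, because $\widetilde{\CB}$ is an ind-projective ind-scheme. Hence $\CT^{\mu,x}_{u,v}$ is projective. The open Richardson variety $\CI^-\dot w\CI/\CI\cap \CI\dot w'\CI/\CI$ is open in $\widetilde{\CB}_{w,w'}$. To see this, use the stratification recalled in \S\ref{sec:4.1}: its complement is the union of the closed Richardson varieties $\widetilde{\CB}_{w_1,w_2}$ with $(w_1,w_2)\ne(w,w')$. This union is finite since $[w,w']$ is finite.

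For (2), start from the decomposition $\widetilde{\CB}_{w,w'}=\bigsqcup_{w\le w_1\le w_2\le w'}\widetilde{\CB}^\circ_{w_1,w_2}$. By Proposition~\ref{prop:realization}, each $w_1$ (resp.\ $w_2$) in $[w,w']$ is uniquely of the form $w_{\mu,x}(u')$ (resp.\ $w_{\mu,x}(v')$) with $u',v'\in[u,v]_{\tilt}$. Moreover $w_1\le w_2$ holds iff $u'\le_{u,v}v'$, i.e.\ iff $[u',v']_{\tilt}\subset[u,v]_{\tilt}$. One point needs checking: by Definition~\ref{def:tilted-richardson}, the stratum $\CT^{\mu,x,\circ}_{u',v'}$ is formed with the same $x$ and $\mu$, so we need $u'\le_x v'$. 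This follows from Lemma~\ref{lem:tilted-interval}(1), since $u'\le_{u,v}v'$ means $u'\le_x v'$ inside the interval. Then $\widetilde{\CB}^\circ_{w_{\mu,x}(u'),w_{\mu,x}(v')}=\CT^{\mu,x,\circ}_{u',v'}$ literally, which gives the stratification.

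For (3), the dimension is $\ell(w')-\ell(w)$. Apply the length formula \eqref{eq:len2}, which is valid because $\mu+\wt(x,z)$ is dominant regular by \eqref{eq:sufficiently}. It gives
\[
\ell(w')-\ell(w)=\ell(v)-\ell(u)+\<\wt(x,v)-\wt(x,u),2\rho\>.
\]
Since $u\le_x v$, we have $\wt(x,v)-\wt(x,u)=\wt(u,v)$. Lemma~\ref{lem:QBG2}(1) then rewrites the right-hand side as $\ell(u,v)$.

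For (4), irreducibility of the open part is the recalled fact from \S\ref{sec:4.1}, ultimately resting on Theorem~\ref{thm:MR}. The closure of an irreducible set is irreducible, which handles the closed variety.

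I expect the main obstacle to be purely bookkeeping: matching strata under the poset isomorphism, in particular verifying that the stratum labels require no change of $x$. This is dispatched by Lemma~\ref{lem:tilted-interval}.
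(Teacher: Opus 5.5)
Your proposal is correct and follows essentially the same route as the paper. You transport the stratification of $\widetilde{\CB}_{w_{\mu,x}(u),w_{\mu,x}(v)}$ through the poset isomorphism of Proposition~\ref{prop:realization}, compute the dimension from \eqref{eq:len2} together with Lemma~\ref{lem:QBG2}(1), and inherit (1) and (4) from the standard properties of affine Richardson varieties, exactly as the paper does. You also make explicit two details the paper leaves implicit: that $u'\le_{u,v}v'$ gives $u'\le_x v'$ via Lemma~\ref{lem:tilted-interval}(1), so each stratum is literally $\CT^{\mu,x,\circ}_{u',v'}$ with the same $x$ and $\mu$, and that the open stratum is open.
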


\begin{proof}
Statements (1) and (4) follow from the properties of affine
Richardson varieties. We have
\begin{align*}
\CT_{u,v}^{\mu,x} &= \widetilde{\CB}_{w_{\mu,x}(u),w_{\mu,x}(v)}
 =\bigsqcup_{[w_1,w_2]  \subset [w_{\mu,x}(u),w_{\mu,x}(v)]} \widetilde{\CB}_{w_1,w_2}^{\circ}\\
 &= \bigsqcup_{ [u',v']_{\mathrm{tilt}} \subset [u,v]_{\mathrm{tilt}}} \widetilde{\CB}_{w_{\mu,x}(u'),w_{\mu,x}(v')}^{\circ}
= \bigsqcup_{ [u',v']_{\mathrm{tilt}} \subset [u,v]_{\mathrm{tilt}}} \CT_{u',v'}^{\mu,x,\circ}.
\end{align*}
The third equality follows from the poset isomorphism (Proposition \ref{prop:realization}). This proves (2). Statement (3) follows from Lemma \ref{lem:QBG2} (1) and \eqref{eq:len2}.
\end{proof}

\begin{definition}\label{def:abstract-tilted-richardson}
For $u,v\in W_0$, the \emph{abstract tilted Richardson variety}
$\CT_{u,v}^{\mathrm{abs}}$ is the $k$-isomorphism class of
$\CT_{u,v}^{\mu,x}$ equipped with its stratification labeled by the
subintervals of $[u,v]_{\tilt}$. The \emph{abstract open tilted
Richardson variety} $\CT_{u,v}^{\mathrm{abs},\circ}$ is the
$k$-isomorphism class of its open stratum $\CT_{u,v}^{\mu,x,\circ}$.
Proposition \ref{prop:well-def} shows that these classes are
independent of $\mu$ and $x$.
\end{definition}

Suppose $u\le v$ and take $x=1$. By Theorem~\ref{thm:technical},
we have $\CT_{u,v}^{\mu,1}
=\widetilde{\CB}_{ut^{-\mu},vt^{-\mu}}
\xrightarrow{\sim}\widetilde{\CB}_{u,v}$, compatible with every open stratum. Thus
$\CT_{u,v}^{\mathrm{abs}}$ is represented by
$\widetilde{\CB}_{u,v}$.

\begin{remark}
We can also use the type $\mathrm{III}$ affine realization $\Psi^{\mathrm{III}}_{\mu,x}$ to define the tilted Richardson varieties. All properties discussed above remain true. We omit the details.
\end{remark}

\section{\texorpdfstring{Tilted $R$-polynomials}{Tilted R-polynomials}}\label{sec:R-polynomials}

\subsection{Definition and point counting}

Let $\bq$ be an indeterminate. The {\it Kazhdan--Lusztig $R$-polynomials} are defined recursively using the following relations:

\begin{enumerate}
    \item $R_{u,v}(\bq) = 0$ if $u \not\leq v$,
    \item $R_{u,v}(\bq) = 1$ if $u = v$,
    \item If $vs < v$ for some simple reflection $s \in \BS_0$, then
    \[
    R_{u,v}(\bq) =
    \begin{cases}
        R_{us, vs}(\bq), & \text{if } us < u, \\
        \bq \cdot R_{us, vs}(\bq) + (\bq-1) \cdot R_{u, vs}(\bq), & \text{if } us > u.
    \end{cases}
    \]
\end{enumerate}
If $k = \BF_q$ for some prime power $q$, Deodhar \cite{Deod85} proved that when $u\le v$, we have
\begin{align*}
    R_{u,v}(q) = \#\mathring{\CB}_{u,v}(k).
\end{align*}

For any $w,w' \in W_{\aff}$, define the affine $R$-polynomial $\widetilde{R}_{w,w'}(\bq)$ in the same way as in the finite case.

\begin{definition}\label{def:tilted-R-poly}
Let $u,v \in W_0$. Let $x \in W_0$ be such that $u \le_x v$ and let $\mu \in Q^{\vee}$ be a sufficiently dominant regular cocharacter. Consider the affine realization $w_{\mu,x}(z) = z t^{-(\mu + \wt(x,z))} x^{-1}$ for $z \in W_0$ (cf. \eqref{eq:psi}). The \emph{tilted $R$-polynomial} is defined as
\begin{align*}
    R_{u,v}^{\mu,x,\mathrm{tilt}}(\bq) = \tilde{R}_{w_{\mu,x}(u),w_{\mu,x}(v)}(\bq).
\end{align*}
\end{definition}

By the Deodhar decomposition and Deodhar's formula for affine $R$-polynomials, we have 
\begin{proposition}\label{prop:point-counting}
Let $q$ be a prime power and take $k=\mathbb F_q$.
For any $x\in W_0$ such that $u\le_x v$ and any sufficiently
dominant regular cocharacter $\mu$, we have
\[
R_{u,v}^{\mu,x,\mathrm{tilt}}(q)
=\#\CT_{u,v}^{\mu,x,\circ}(\mathbb F_q).
\]
\end{proposition}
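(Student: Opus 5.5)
Since $\CT_{u,v}^{\mu,x,\circ}$ is by Definition~\ref{def:tilted-richardson} the open affine Richardson variety $\widetilde{\CB}^\circ_{w,w'}$ with $w=w_{\mu,x}(u)$ and $w'=w_{\mu,x}(v)$, and $R_{u,v}^{\mu,x,\mathrm{tilt}}=\widetilde R_{w,w'}$ by Definition~\ref{def:tilted-R-poly}, the statement reduces to Deodhar's point-counting identity $\widetilde R_{w,w'}(q)=\#\widetilde{\CB}^\circ_{w,w'}(\BF_q)$ for affine Weyl group elements $w\le w'$. Note $w\le w'$ holds by Proposition~\ref{prop:realization}, since $u\le_x v$. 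I would prove this identity directly from Theorem~\ref{thm:MR}.

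Fix a reduced expression $\ud{w'}$. By Theorem~\ref{thm:MR}, $\widetilde{\CB}^\circ_{w,w'}$ is a disjoint union of the locally closed pieces $\CD_{\ud x,\ud{w'}}\cong\mathbb G_m^{|J^\circ_{\ud x}|}\times\mathbb A^{|J^-_{\ud x}|}$ over $\ud x\in\mathrm{Dist}_{w,\ud{w'}}$. Counting $\BF_q$-points gives
\[
\#\widetilde{\CB}^\circ_{w,w'}(\BF_q)=\sum_{\ud x\in \mathrm{Dist}_{w,\ud{w'}}}(q-1)^{|J^\circ_{\ud x}|}q^{|J^-_{\ud x}|}.
\]
The parametrization is defined over $k$, and the pieces partition the $\BF_q$-points: the Marsh--Rietsch argument applies over any field, and the Deodhar data of a point is determined by its relative positions, so it is Galois-invariant.

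It remains to show that the right-hand side satisfies the defining recursion of $\widetilde R_{w,w'}$. I would proceed by induction on $\ell(w')$, taking $s$ to be the last letter of $\ud{w'}$, so that $w's<w'$. Classify each distinguished subexpression by its last step. If $ws<w$, distinguishedness forces the last step to be $x_{(N-1)}=ws$, and truncation gives a bijection with $\mathrm{Dist}_{ws,\ud{w'}s}$ preserving the weights, which matches $R_{ws,w's}$. If $ws>w$, the last step is either an $s$-move down from $ws$ (an index in $J^-$, contributing a factor $q$ and giving $\mathrm{Dist}_{ws,\ud{w's}}$) or a stay at $w$ (an index in $J^\circ$, contributing $q-1$ and giving $\mathrm{Dist}_{w,\ud{w's}}$). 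This yields $qR_{ws,w's}+(q-1)R_{w,w's}$. The base cases follow because $\mathrm{Dist}_{w,\ud{w'}}$ is empty unless $w\le w'$, and for $w'=1$ the sum equals $1$.

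The only delicate point is the bookkeeping in the inductive step: one must check that the truncated sequences are exactly the distinguished subexpressions for $ws$ or for $w$ in $\ud{w'}$ with its last letter removed. This holds because distinguishedness is a condition imposed step by step.
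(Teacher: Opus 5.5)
Your proposal is correct and follows the paper's argument, which also just combines the Deodhar decomposition of Theorem~\ref{thm:MR} for $\widetilde{\CB}^\circ_{w_{\mu,x}(u),w_{\mu,x}(v)}$ with Deodhar's formula for $\widetilde R_{w,w'}$ as a sum of $(q-1)^{|J^\circ_{\ud x}|}q^{|J^-_{\ud x}|}$ over distinguished subexpressions. The only difference is that the paper cites that formula, whereas you re-derive it by checking the defining recursion with $s$ the last letter of $\ud{w'}$; your case analysis of that last step is right.
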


As a corollary to Proposition~\ref{prop:well-def}, the polynomial
$R_{u,v}^{\mu,x,\mathrm{tilt}}(\bq)$ is independent of the choices of $\mu$ and $x$. We denote the common value by
$R_{u,v}^{\mathrm{tilt}}(\bq)$. In particular, we have $R_{u,v}^{\mathrm{tilt}}(q) =\#\CT_{u,v}^{\mathrm{abs},\circ}(\mathbb F_q)$ for any prime power $q$.

\begin{remark}
One may also use the type $\mathrm{III}$ affine realization in \eqref{eq:other-realizations} to define the tilted $R$-polynomials. We omit the details.
\end{remark}

\subsection{Minimal weight paths and the path formula}

We first introduce the tilted Bruhat graph and the minimal weight paths.

Recall that the set of affine roots is $\Phi_{\aff} = \{\a + m\d \mid \a \in \Phi,m\in\BZ\}$. By convention, the set of positive affine roots is $\Phi^+_{\aff} = \{\a+m\d \in \Phi_{\aff} \mid m\ge \d_{\Phi^-}(\a)\}$. Here, for any $\b\in\Phi$, we denote
$$\d_{\Phi^-}(\b) = \begin{cases}
    1 & \b \in \Phi^-\\
    0 & \b \in \Phi^+.
\end{cases}$$

\begin{definition}
The tilted Bruhat graph $\TBG(\Phi)$ is a directed graph defined as follows, extending the quantum Bruhat graph.
\begin{enumerate}
\item Vertices: Elements of $W_0$.
\item Edges: 
\begin{enumerate}[(i)]
    \item Bruhat edges: $w \to w s_\alpha$ when $w s_\alpha > w$, $\a \in \Phi^+$.
    \item Quantum edges: $w \to w s_\alpha$ when $w s_\a < w$, $\a\in\Phi^+$.
\end{enumerate}
\item Weights: A Bruhat edge $w \to w s_\alpha$ has weight 0; a quantum edge $w \to w s_\alpha$ has weight $\alpha^\vee$.
\item Let $p: u_0 = u \xrightarrow{\b_1}u_1\xrightarrow{\b_2}\cdots \xrightarrow{\b_r}u_r=v$ with $u_{i+1} = u_i s_{\b_{i+1}}$ be a path in $\TBG(\Phi)$. The length of $p$ is the number of its edges, denoted by $\ell(p)=r$. The \emph{weight} of $p$ is the sum of the weights of its edges, denoted by $\wt(p)$. In other words, $\wt(p) = \sum_{i=1}^r \d_{\Phi^-}(u_{i-1}(\b_i))\b_i^{\vee}$.

\end{enumerate}
    
\end{definition}

Note that paths in $\QBG(\Phi)$ are naturally viewed as paths in $\TBG(\Phi)$.

\begin{definition}
We say a path $p$ in $\TBG(\Phi)$ from $u$ to $v$ is of \emph{minimal weight} if $\wt(p)=\wt_{\QBG}(u,v)$. 
\end{definition}

\begin{definition}[{\cite[Definition 2.1]{Dyer93}}]\label{def:reflection-order}
\begin{enumerate}
\item 
A total order $\preceq$ on $\Phi^+$ is called a {\it reflection order} if for any $\a, \b \in \Phi^+$ and $a, b > 0$ such that $\a \prec \beta$ and $a\a + b\b \in \Phi^+$, we have $\a \prec a\a + b\b \prec \b$.

\item Replacing $\Phi^+$ with $\Phi_{\aff}^+$, we define the notion of \emph{reflection order} on $\Phi_{\aff}^{+}$. 
\end{enumerate}
\end{definition} 
Let $\preceq$ be a reflection order on $\Phi^+$. A path $p: u_0 = u \xrightarrow{\b_1}u_1\xrightarrow{\b_2}\cdots \xrightarrow{\b_r}u_r=v$ in $\TBG(\Phi)$ is called \emph{label-increasing} with respect to $\preceq$ if $\b_1\preceq \b_2\preceq \cdots \preceq \b_r$.

Note that all shortest paths in $\QBG(\Phi)$, when viewed as paths in $\TBG(\Phi)$, are of minimal weight. However, not all minimal weight paths lie in $\QBG(\Phi)$.
\begin{example}
In the case of type $A_2$, choose the reflection order $\a_1\prec \a_{1} + \a_2\prec\a_2$. The path $1\to s_2s_1s_2 \to s_2 s_1$ in $\TBG(\Phi)$ is label-increasing but not of minimal weight. The path $s_2 s_1\xrightarrow{\a_1+\a_2} s_1$ is a minimal weight edge in $\TBG(\Phi)$, but it is not an edge in $\QBG(\Phi)$. Indeed, we have $\ell_{\QBG}(s_2 s_1,s_1) = 3$ (see Figure \ref{fig:1}).
\end{example}

The notion of the tilted Bruhat graph we use here is a degenerate version of the double Bruhat graph in \cite{NW17} and \cite{Sch25a}. The relation between $\QBG(\Phi)$ and $\TBG(\Phi)$ is similar to the relation between the Hasse diagram of the Bruhat order and the (full) Bruhat graph.

\needspace{6\baselineskip}

The main result of this section is the following.

\begin{theorem}\label{thm:R-poly}
Let $\preceq$ be a reflection order on $\Phi^+$. Let $u,v\in W_0$. Then 
\begin{align}\label{eq:R-poly}
R_{u,v}^{\tilt}(\bq)
= \bq^{\frac{\ell(u,v)}{2}}
\sum_{p}
\left(\bq^{\frac{1}{2}} - \bq^{-\frac{1}{2}}\right)^{\ell(p)},
\end{align}
where $p$ runs over all label-increasing paths (with respect to $\preceq$) in $\TBG(\Phi)$ from $u$ to $v$ of minimal weight.    
\end{theorem}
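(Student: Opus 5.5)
The plan is to transport Dyer's formula for affine $R$-polynomials through the type $\mathrm{I}$ realization. Fix $x=u$, so that $u\le_u v$ holds automatically, and fix a sufficiently dominant regular $\mu$. By definition $R^{\tilt}_{u,v}=\tilde R_{w(u),w(v)}$, where $w(z)=w_{\mu,u}(z)=zt^{-(\mu+\wt(u,z))}u^{-1}$. Dyer's formula \cite{Dyer93} for the Coxeter group $W_{\aff}$ states that, for any reflection order $\preceq_{\aff}$ on $\Phi^+_{\aff}$,
\[
\tilde R_{w(u),w(v)}(\bq)=\bq^{\frac{\ell(w(v))-\ell(w(u))}{2}}\sum_{P}\left(\bq^{\frac12}-\bq^{-\frac12}\right)^{\ell(P)},
\]
where $P$ runs over paths $w(u)=a_0\to a_1\to\cdots\to a_r=w(v)$ in the affine Bruhat graph. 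Each step is $a_i=a_{i-1}s_{\gamma_i}$ with $a_{i-1}<a_i$, and the labels $\gamma_i\in\Phi^+_{\aff}$ are required to increase. By \eqref{eq:len2} and Lemma~\ref{lem:QBG2}(1), the prefactor equals $\bq^{\ell(u,v)/2}$. Two things remain: a bijection between the affine paths and the tilted paths, and a reflection order $\preceq_{\aff}$ compatible with the given $\preceq$.

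\textbf{Step 1 (paths).} Every vertex of an affine path lies in $[w(u),w(v)]$. By Proposition~\ref{prop:realization}, each vertex is therefore $w(z_i)$ for some $z_i\in[u,v]_{\tilt}$, with $z_{i-1}<_{u,v}z_i$. Next I will compute the reflection carrying $w(z_{i-1})$ to $w(z_i)$. Since $z_i\ne z_{i-1}$, we have $z_i=z_{i-1}s_{\b}$ for a unique $\b\in\Phi^+$. Writing $w(z)=z t^{-\l_z}u^{-1}$, the right reflection
\[
w(z_{i-1})^{-1}w(z_i)=u\,t^{\l_{z_{i-1}}}s_\b t^{-\l_{z_i}}u^{-1}
\]
is a reflection only if $\l_{z_i}-\l_{z_{i-1}}\in\BZ\b^\vee$. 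I expect this multiple to be $\delta_{\Phi^-}(z_{i-1}(\b))\b^\vee$, i.e.\ exactly the tilted edge weight; this comes from the affine length condition $a_{i-1}<a_i$ together with the identification of positive affine roots. Minimality of weight then follows from telescoping, $\sum_i(\l_{z_i}-\l_{z_{i-1}})=\wt(u,v)$. For the converse, a minimal-weight tilted path from $u$ to $v$ has partial weights that are forced to satisfy $\wt(u,z_i)=\wt(u,z_{i-1})+\text{(edge weight)}$. This follows from the triangle inequality in Lemma~\ref{lem:QBG2}(3) combined with Lemma~\ref{lem:QBG}. Hence each step lifts to an affine Bruhat-graph edge $w(z_{i-1})\to w(z_i)$; the inequality $w(z_{i-1})<w(z_i)$ comes from Theorem~\ref{thm:QBG}. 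This gives a length-preserving bijection between minimal-weight paths in $\TBG(\Phi)$ from $u$ to $v$ and affine Bruhat paths from $w(u)$ to $w(v)$.

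\textbf{Step 2 (reflection orders).} The affine label of the tilted edge $z\xrightarrow{\b}zs_\b$ has the form $\gamma=u(\pm\b)+m\delta$, with $m$ depending on $\langle\l_z,\b\rangle$ and on the edge type. Because the conjugating element $u$ is the same for every edge, the finite parts of the affine labels are $u$-translates of the $\b$'s, up to sign. I will build $\preceq_{\aff}$ by choosing a reflection order on $\Phi^+_{\aff}$ whose restriction to the relevant labels orders them according to $\preceq$ on $\b$. The candidate is Dyer's construction of affine reflection orders from a finite reflection order twisted by $u$. In that construction the roots $\pm u\b+m\delta$ are sorted first by which "half" they lie in and then by $\b$; since $\mu$ is large, all $m$'s lie in a window where this sorting reduces to $\preceq$. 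The point to verify is that, for labels arising from the interval, $\gamma\prec_{\aff}\gamma'$ if and only if $\b\prec\b'$. Once this holds, increasing affine paths correspond exactly to label-increasing tilted paths, and \eqref{eq:R-poly} follows.

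The main obstacle is Step 2. The labels mix roots $u\b+m\delta$ with $m$ small and roots $-u\b+m\delta$ with $m$ near $\langle\mu,\b\rangle$, and a single affine reflection order has to interleave these consistently with $\preceq$. My first attempt will be the order in which roots are compared by the limit of $(\a+m\delta)/m$ direction, i.e.\ the standard "convex" order associated with an alcove far in the dominant direction. I will then check its compatibility with $\preceq$ using the convexity axiom of Definition~\ref{def:reflection-order}, reducing to rank two.
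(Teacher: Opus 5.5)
There is a genuine gap in Step~2. The statement holds for \emph{every} reflection order $\preceq$ on $\Phi^+$. Your plan is to build, for that given $\preceq$, an order on $\Phi^+_{\aff}$ that agrees with it on the affine labels, but neither candidate you name does this in general.

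First, a lexicographic order (by ``half'', then by the finite part) is not a reflection order on $\Phi^+_{\aff}$. Already for $A_2$, suppose $(\a_1+\a_2)+m\d$ precedes $-\a_2+m'\d$. Convexity forces their sum $\a_1+(m+m')\d$ to lie between them. But it sits in an earlier block when $\a_1\prec\a_1+\a_2$, and the case $\a_1+\a_2\prec\a_1$ fails symmetrically.

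Second, orders defined through a limiting direction near $\d$ (far alcoves, generic functionals) are linear. On roots $u(\b)+M\d$ with $M$ near $\<\mu,\b\>$, they compare the $\b$'s through a ratio of two linear functions of $\b$, such as $(h,\b)/\<\mu,\b\>$. They can therefore only reproduce finite reflection orders of that special form. A general reflection order, i.e.\ a general reduced word of $w_0$, is not of this form. In type $A$ the special ones come from straight-line wiring diagrams, and non-stretchable pseudoline arrangements give orders that are not. Rank-two convexity checks cannot overcome a global obstruction of this kind.

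The paper avoids the problem with a reduction your plan lacks. The right-hand side of \eqref{eq:R-poly} is independent of the reflection order (\cite[Theorem 3.1]{BFP}, \cite[Corollary 4.9]{Sch25a}), so one convenient $\preceq$ suffices. The paper makes three choices:
\begin{enumerate}
\item $\preceq$ given by $(h,\pi(\b))$, with $\pi(\b)=\b/\mathrm{ht}(\b)$ and $h$ generic;
\item $\mu=N\rho^\vee$, so that $\<\mu,\b\>=N\,\mathrm{ht}(\b)$ matches that normalization;
\item an affine order refining the comparison of $(x(h),\pi(\tg))$.
\end{enumerate}
Then $N\widetilde{\mathrm{ht}}(\d)\,(x(h),\pi(\tg_{j,N}))\to(h,\pi(\b_j))$. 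Since consecutive labels of a minimal weight path are distinct, the comparisons agree for $N$ large. Even for such a linear $\preceq$, an arbitrary fixed $\mu$ will not work: $\mu$ must match the normalization of $\preceq$ and be scaled up.

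Your description of the labels is also inaccurate, and this puts the difficulty in the wrong place. Your own computation gives $w(z_{i-1})^{-1}w(z_i)=s_{\tg}$ with $\tg=u(\b)+M\d$ and $M=\d_{\Phi^-}(z_{i-1}(\b))+\<\mu+\wt(u,z_{i-1}),\b\>$. So every label has finite part $u(\b)$ and level within a bounded distance of $\<\mu,\b\>$; the edge type only shifts $M$ by one. There are no labels of small level and no sign change. The real issue is that these bounded offsets can flip comparisons when $\mu$ is fixed, and this is exactly what the limit $N\to\infty$ controls.

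Finally, in Step~1 write $\l_{z_i}-\l_{z_{i-1}}=\wt(z_{i-1},z_i)=c\,\b^\vee$. Lemma~\ref{lem:QBG2}(2) gives $c=0$ when $z_{i-1}(\b)>0$ and $c\ge 1$ otherwise, but not $c\le 1$. That bound, and also your converse direction, need the edge bound $\wt(z,zs_\b)\le\d_{\Phi^-}(z(\b))\b^\vee$ of \cite[Lemma 4.3]{Sch23}. With it, Step~1 becomes Proposition~\ref{prop:path-corr}.
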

\begin{remark}
The same formula for $R_{u,v}^{\mathrm{tilt, GGG}}(\bq)$ in the type $A$ case was conjectured in \cite[Conjecture 6.23]{GGG26}.

\end{remark}

\subsection{Proof via affine Bruhat chains}

We first give a simple lemma on minimal weight paths.

\begin{lemma}\label{lem:TBG}
Let $u,v\in W_0$. Let $p:u = u_0 \to u_1\to \cdots \to u_r = v$ be a path from $u$ to $v$ in $\TBG(\Phi)$.
\begin{enumerate}
    \item We have $\wt(p)\ge \wt_{\QBG}(u,v)$.

    \item The path $p$ is a minimal weight path if and only if, for each $j$, (i) the edge $u_{j-1}\to u_j$ is a minimal weight edge, (ii) $u_j$ lies in $[u,v]_{\tilt}$ and (iii) $u_{j-1} \le_{u,v} u_j$ (see Definition \ref{def:tilted-bruhat-interval} for the notation).
\end{enumerate}
\end{lemma}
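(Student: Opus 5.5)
The plan is to reduce everything to a single-edge inequality and then use additivity of weights along the path, together with the triangle inequality of Lemma~\ref{lem:QBG2}(3).

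\textbf{Step 1: a single $\TBG(\Phi)$-edge dominates the $\QBG$-distance.} For an edge $w\to ws_\a$ I will show
\[
\wt(e)\ge \wt_{\QBG}(w,ws_\a).
\]
Bruhat edges are easy. If $ws_\a>w$, then $w\le ws_\a$, so $\wt_{\QBG}(w,ws_\a)=0=\wt(e)$ by Lemma~\ref{lem:QBG2}(2).

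For a quantum edge, $ws_\a<w$ and $\wt(e)=\a^\vee$. I would use the affine realization to convert a $\TBG$-edge into an affine Bruhat relation and then read off the inequality from Theorem~\ref{thm:QBG}. Take $\mu$ sufficiently dominant regular and consider the affine reflection $r$ attached to the affine root $\a+\d$ (or $-\a+\d$, depending on conventions). This $r$ sends $wt^{-\mu}$ to $ws_\a t^{-(\mu+\a^\vee)}$ up to adjusting which side the translation sits on. Since any reflection $r$ gives either $y<yr$ or $yr<y$ in Bruhat order, the length formula \eqref{eq:len2} decides which holds. Here
\[
\ell(ws_\a t^{-(\mu+\a^\vee)})-\ell(wt^{-\mu})=\ell(ws_\a)-\ell(w)+\<\a^\vee,2\rho\>>0,
\]
because $\ell(ws_\a)-\ell(w)\ge 1-\<\a^\vee,2\rho\>$ (a standard inequality for reflections). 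Hence
\[
wt^{-\mu}\cdot 1 \le ws_\a t^{-(\mu+\a^\vee)}\cdot 1.
\]
Theorem~\ref{thm:QBG}, with $y=y'=1$, then gives $\wt(w,ws_\a)\le\a^\vee$, which is the claim.

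\textbf{Step 2: part (1).} Summing the single-edge inequalities along $p$ and applying the triangle inequality Lemma~\ref{lem:QBG2}(3) repeatedly gives
\[
\wt(p)=\sum_j\wt(e_j)\ge\sum_j\wt(u_{j-1},u_j)\ge\wt(u,v).
\]

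\textbf{Step 3: part (2).} Equality $\wt(p)=\wt(u,v)$ holds if and only if every inequality in the chain above is an equality, since each difference is a nonnegative combination of coroots. Equality in the first sum is exactly condition (i), that each edge has minimal weight. Equality in the second, $\sum_j\wt(u_{j-1},u_j)=\wt(u,v)$, remains to be translated.

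For the forward direction, combine the triangle inequalities on the prefix $u\to u_j$ and the suffix $u_j\to v$:
\[
\wt(u,v)\le\wt(u,u_{j-1})+\wt(u_{j-1},u_j)+\wt(u_j,v)\le\sum_i\wt(u_{i-1},u_i)=\wt(u,v).
\]
All of these are therefore equalities. This yields $u_j\in[u,v]_{\tilt}$, which is (ii), and $u_{j-1}\le_{u,v}u_j$, which is (iii), by Definition~\ref{def:tilted-bruhat-interval}.

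For the converse, assume (i)--(iii). Condition (iii) says
\[
\wt(u,u_{j-1})+\wt(u_{j-1},u_j)+\wt(u_j,v)=\wt(u,v),
\]
and (ii) says $\wt(u,u_j)+\wt(u_j,v)=\wt(u,v)$. Comparing the two gives $\wt(u,u_j)=\wt(u,u_{j-1})+\wt(u_{j-1},u_j)$. By induction on $j$, $\wt(u,u_j)=\sum_{i\le j}\wt(u_{i-1},u_i)$. At $j=r$ this equals $\wt(u,v)$, and combined with (i) it gives $\wt(p)=\wt(u,v)$.

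\textbf{Main obstacle.} The delicate point is Step~1 for quantum edges. The difficulty is getting the affine reflection and the sign conventions right so that the length comparison via \eqref{eq:len2} produces the correct Bruhat relation. If this route proves awkward, the fallback is a direct induction on $\ell(s_\a)$, peeling simple reflections off the root $\a$ using QBG edges for simple reflections.
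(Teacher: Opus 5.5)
Your proof is correct and has the same skeleton as the paper's: a single-edge bound $\wt_{\QBG}(z,zs_\b)\le\d_{\Phi^-}(z(\b))\b^\vee$, summed along $p$ and combined with the triangle inequality of Lemma~\ref{lem:QBG2}(3), followed by an equality analysis. The paper compresses that analysis into ``all statements follow''; your prefix/suffix argument, using that the dominance cone is pointed, supplies those details correctly.

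The genuine difference is how you get the single-edge bound. The paper imports it from \cite[Lemma 4.3]{Sch23}. You derive it from Theorem~\ref{thm:QBG} with $y=y'=1$, by showing $wt^{-\mu}<ws_\a t^{-(\mu+\a^\vee)}$. The step you flagged as the main obstacle goes through cleanly:
\begin{itemize}
\item The two elements differ by a reflection: $(wt^{-\mu})^{-1}\,ws_\a t^{-(\mu+\a^\vee)}=s_\a t^{-(\<\mu,\a\>+1)\a^\vee}=s_{\a+(\<\mu,\a\>+1)\d}$ in the convention \eqref{eq:stg}. Equivalently, this is left multiplication by $s_{w(\a)+\d}$, not $s_{\pm\a+\d}$ on the right.
\item By \eqref{eq:len2}, the length difference is $\ell(ws_\a)-\ell(w)+\<\a^\vee,2\rho\>\ge\<\a^\vee,2\rho\>-\ell(s_\a)\ge1$.
\item The regularity hypothesis of Theorem~\ref{thm:QBG} holds for $\mu+\a^\vee$, since $\<\mu,\b\>\ge 6$ and $\<\a^\vee,\b\>\ge-3$ for $\b\in\Phi^+$.
\end{itemize}

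Your route stays within results the paper already quotes, and it proves $\wt(w,ws_\a)\le\a^\vee$ for every $\a\in\Phi^+$ at once. The cost is that an elementary statement about $W_0$ is deduced from the much deeper affine Bruhat-order criterion, plus the extra standard inequality $\ell(s_\a)\le\<\a^\vee,2\rho\>-1$. The paper's citation is shorter and keeps the lemma purely finite.
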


\begin{proof}
By \cite[Lemma 4.3]{Sch23}, for $z \in W_0$ and $\b \in \Phi^+$, we have
\begin{align}\label{eq:TBG-edge-bound}
\wt_{\QBG}(z, zs_\beta)\leq \d_{\Phi^-}(z(\b))\b^{\vee}.
\end{align}
We now prove the two statements.

\begin{enumerate}

\item Applying Lemma \ref{lem:QBG2} (3) and \eqref{eq:TBG-edge-bound}, we obtain
\[\wt_{\QBG}(u,v) \leq \sum_{j=1}^r\wt_{\QBG}(u_{j-1},u_j) \le \wt(p).\]

\item
Applying Lemma \ref{lem:QBG2} (3) and \eqref{eq:TBG-edge-bound}, we get
\begin{align*}
\wt_{\QBG}(u,v) \leq \sum_{j=1}^r\wt_{\QBG}(u_{j-1},u_j) \le \wt(p).
\end{align*}
Then $\wt(p) = \wt_{\QBG}(u,v)$ if and only if all equalities hold. All statements follow.
\end{enumerate}
\end{proof}

By convention, for an affine root $\tg = \g + m\d$, the corresponding affine reflection is defined as
\begin{align}\label{eq:stg}
s_{\tg} = t^{m\g^{\vee}}s_{\g} = s_{\g} t^{- m\g^{\vee}}.
\end{align}

\begin{definition}
An \emph{(upward) Bruhat edge} in $W_{\aff}$ is of the form $w\xrightarrow{\tg}ws_{\tg}$, $\tg\in\Phi^+_{\aff}$ such that $w<ws_{\tg}$. An \emph{(upward) Bruhat chain} in $W_{\aff}$ is of the form
\begin{align*}
w \xrightarrow{\tg_1} w_1\xrightarrow{\tg_2} w_2\xrightarrow{\tg_3}\cdots\xrightarrow{\tg_r}w_r = w',
\end{align*}
where each edge is an upward Bruhat edge. We call these $\tg_i$ the \emph{(right) labels} of the chain. Let $\preceq_{\aff}$ be a reflection order on $\Phi_{\aff}^+$ (see Definition \ref{def:reflection-order}). The chain is called \emph{(right) label-increasing} if $\tg_1\preceq_{\aff}\tg_2 \preceq_{\aff}\cdots\preceq_{\aff}\tg_{r}$.
\end{definition}

The affine realization identifies minimal weight paths with affine Bruhat chains, as follows.

\begin{proposition}\label{prop:path-corr} 
Let $x \in W_0$ and let $\mu$ be a sufficiently dominant regular cocharacter. Let $u,v\in W_0$ with $u\le_x v$.
\begin{enumerate}
\item There is a minimal weight edge $u \xrightarrow{\b} v$ in $\TBG(\Phi)$ if and only if $w_{\mu,x}(u) = w_{\mu,x}(v)s_{\tg}$ for some $\tg\in\Phi_{\aff}^+$ (see \eqref{eq:stg} for the notation). In this case, we have $\tg = x(\b) + (\d_{\Phi^-}(u(\b)) + \<\mu+\wt_{\QBG}(x,u),\b\>)\d$.  

\item The affine realization $\Psi_{\mu,x}$ induces a bijection between the set of minimal weight paths from $u$ to $v$ in $\TBG(\Phi)$ and the set of upward Bruhat chains from $w_{\mu,x}(u)$ to $w_{\mu,x}(v)$ in $W_{\aff}$.
\end{enumerate}
\end{proposition}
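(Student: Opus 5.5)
The plan is to compute $w_{\mu,x}(u)s_{\tg}$ directly for (1), using the uniqueness of the presentation $zt^{-\l}y$ when $\l$ is dominant regular. Then (2) follows by applying (1) edge by edge, with Proposition~\ref{prop:realization} and Lemma~\ref{lem:TBG} controlling the intermediate vertices.

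\textbf{Step 1: the forward direction of (1).} Put $\l=\mu+\wt(x,u)$. Let $\b\in\Phi^+$ and $\tg=x(\b)+m\d$. Using \eqref{eq:stg}, $x^{-1}s_{x(\b)}=s_\b x^{-1}$ and $x^{-1}t^{-mx(\b)^\vee}=t^{-m\b^\vee}x^{-1}$, I will compute
\[
w_{\mu,x}(u)s_{\tg}=ut^{-\l}s_\b t^{-m\b^\vee}x^{-1}=us_\b\, t^{-(\l-\<\l,\b\>\b^\vee+m\b^\vee)}x^{-1}.
\]
Suppose $u\xrightarrow{\b}v$ is a minimal weight edge, so $v=us_\b$. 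Then $\wt(x,v)=\wt(x,u)+\wt(u,v)=\wt(x,u)+\d_{\Phi^-}(u(\b))\b^\vee$, since $u\le_x v$ and the edge has minimal weight. Taking $m=\d_{\Phi^-}(u(\b))+\<\l,\b\>$ makes the right-hand side equal to $w_{\mu,x}(v)$. Because $\mu$ is sufficiently dominant, $m\ge 2$, so $\tg\in\Phi^+_{\aff}$ whatever the sign of $x(\b)$.

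\textbf{Step 2: the converse of (1).} Suppose $w_{\mu,x}(u)=w_{\mu,x}(v)s_{\tg}$ with $\tg=\g+m\d$. Replacing $\tg$ by $-\tg$ does not change the reflection, so we may write $\g=x(\b)$ with $\b\in\Phi^+$; positivity of $\tg$ is then recovered at the end as in Step 1. The computation above expresses $w_{\mu,x}(u)s_{\tg}$ as $us_\b t^{-\l'}x^{-1}$ with $\l'$ dominant regular by \eqref{eq:sufficiently}. Uniqueness of the presentation then forces
\[
v=us_\b,\qquad \wt(x,v)-\wt(x,u)=(m-\<\l,\b\>)\b^\vee .
\]
Since $u\le_x v$, the left side of the second equation is $\wt(u,v)$, so $\wt(u,v)=c\b^\vee$ for some integer $c\ge 0$. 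By \eqref{eq:TBG-edge-bound}, $\wt(u,v)\le\d_{\Phi^-}(u(\b))\b^\vee$.
\begin{itemize}
\item If $u(\b)>0$, then $c=0$, which is the edge weight.
\item If $u(\b)<0$, then $c\in\{0,1\}$. The case $c=0$ would give $u\le v=us_\b<u$ by Lemma~\ref{lem:QBG2}(2), which is impossible, so $c=1$.
\end{itemize}
In both cases the edge $u\xrightarrow{\b}v$ has minimal weight, and $m$ is given by the stated formula. I expect this converse, especially pinning down $c$ and keeping track of the sign of $\tg$, to be the only delicate point.

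\textbf{Step 3: part (2).} Take a minimal weight path $u=u_0\to\cdots\to u_r=v$. By Lemma~\ref{lem:TBG}(2), each $u_j$ lies in $[u,v]_{\tilt}$, $u_{j-1}\le_{u,v}u_j$, and each edge has minimal weight. Lemma~\ref{lem:tilted-interval}(1) turns $u_{j-1}\le_{u,v}u_j$ into $u_{j-1}\le_x u_j$. Part (1) then gives $w_{\mu,x}(u_j)=w_{\mu,x}(u_{j-1})s_{\tg_j}$, and Proposition~\ref{prop:realization} gives $w_{\mu,x}(u_{j-1})<w_{\mu,x}(u_j)$. So the images form an upward Bruhat chain, and the map on paths is injective because $\Psi_{\mu,x}$ is.

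Conversely, every vertex of an upward Bruhat chain from $w_{\mu,x}(u)$ to $w_{\mu,x}(v)$ lies in $[w_{\mu,x}(u),w_{\mu,x}(v)]$. By Proposition~\ref{prop:realization} each vertex is $w_{\mu,x}(u_j)$ for a unique $u_j\in[u,v]_{\tilt}$, with $u_{j-1}<_{u,v}u_j$, hence $u_{j-1}<_x u_j$. Part (1) makes each step $u_{j-1}\to u_j$ a minimal weight edge of $\TBG(\Phi)$. Lemma~\ref{lem:TBG}(2) then shows the resulting path has minimal weight. The two constructions are mutually inverse.
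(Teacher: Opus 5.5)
Your proposal is correct and follows essentially the same route as the paper. It uses the same direct computation of $w_{\mu,x}(u)s_{\tg}$ and the same use of \eqref{eq:TBG-edge-bound} and Lemma~\ref{lem:QBG2}(2) to force $\wt(u,v)=\d_{\Phi^-}(u(\b))\b^\vee$. Part (2) is the same edge-by-edge transfer through Proposition~\ref{prop:realization} and Lemma~\ref{lem:TBG}, and you usefully make explicit the passage from $\le_{u,v}$ to $\le_x$ via Lemma~\ref{lem:tilted-interval}.

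One small correction: in Step~2 you cannot cite \eqref{eq:sufficiently} to make $\l'=\l+(m-\<\l,\b\>)\b^\vee$ dominant regular before $m$ is known. No regularity is needed there, though. Both expressions end in the same factor $x^{-1}$, so the decomposition $W_{\aff}=Q^\vee\rtimes W_0$ already forces $v=us_\b$ and $\l'=\mu+\wt(x,v)$.
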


\begin{proof}
\begin{enumerate}
\item If $u^{-1}v$ is not a reflection, then both statements fail. Therefore, we may assume that $v = us_\b$ for some $\b \in \Phi^+$. Direct computation gives
\[
w_{\mu,x}(v)
=w_{\mu,x}(u)s_{x(\b)}
t^{-\<\mu+\wt_{\QBG}(x,u),\b\>x(\b^{\vee})
-x(\wt_{\QBG}(u,v))}.
\]
Then
\begin{align*}
&\quad \quad \quad w_{\mu,x}(v) =  w_{\mu,x}(u)s_{\tg} \text{ for some }\tg \in \Phi^+_{\aff} \\
&\iff \wt_{\QBG}(u,v) = c \b^{\vee} \text{ for some } c \in \BZ_{\ge0}\\
&\iff \wt_{\QBG}(u,v) = \d_{\Phi^-}(u(\b))\b^{\vee}\\
& \iff u\xrightarrow{\b} v\text{ is a minimal weight edge in }\TBG(\Phi).
\end{align*}
Here, the second equivalence follows from the fact that $\wt_{\QBG}(u,v) \le \d_{\Phi^-}(u(\b))\b^{\vee}$ (cf. \eqref{eq:TBG-edge-bound}) and Lemma \ref{lem:QBG2}(2). This proves (1).

\item By (1) and Lemma \ref{lem:TBG}, any minimal weight path from $u$ to $v$ in $\TBG$ corresponds to an upward Bruhat chain from $w_{\mu,x}(u)$ to $w_{\mu,x}(v)$. Conversely, by Proposition \ref{prop:realization} and Lemma \ref{lem:TBG}, any upward Bruhat chain from $w_{\mu,x}(u)$ to $w_{\mu,x}(v)$ is of this form. \qedhere
\end{enumerate}
\end{proof}

The following formula for $R$-polynomials is due to Dyer (with a different normalization). 
\begin{proposition}
[{\cite[Proposition 3.3]{Dyer93} and \cite[Corollary 3.4]{Dyer93}}]\label{thm:dyer}

Let $\preceq_{\aff}$ be a reflection order on $\Phi^+_{\aff}$. Let $w,w'\in W_{\aff}$. Then
\begin{align*}
\widetilde{R}_{w,w'}(\bq)
= \bq^{\frac{\ell(w') - \ell(w)}{2}}
\sum_{p}
\left(\bq^{\frac{1}{2}} - \bq^{-\frac{1}{2}}\right)^{\ell(p)},
\end{align*}
where $p$ runs over all (right) label-increasing (with respect to $\preceq_{\aff}$) upward Bruhat chains from $w$ to $w'$.
\end{proposition}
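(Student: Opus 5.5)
This is Dyer's theorem. We include the argument because it applies verbatim to the Coxeter group $W_{\aff}$. The plan is to set
\[
F_{w,w'}(\bq)=\bq^{\frac{\ell(w')-\ell(w)}{2}}\sum_p\bigl(\bq^{\frac12}-\bq^{-\frac12}\bigr)^{\ell(p)},
\]
where $p$ runs over the right label-increasing upward Bruhat chains from $w$ to $w'$. We then show that $F_{w,w'}$ satisfies the recursion defining $\widetilde R_{w,w'}$.

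The argument has two parts. First, $F_{w,w'}$ is independent of the reflection order $\preceq_{\aff}$. Second, for a suitably adapted reflection order, $F$ obeys the three defining relations of $\widetilde R$. The boundary cases are immediate. If $w\not\le w'$ there are no chains, so $F_{w,w'}=0$. If $w=w'$ only the empty chain contributes, so $F_{w,w}=1$.

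For the recursive step, fix $s\in\tilde\BS$ with $w's<w'$, and write $\a_s$ for its simple root. We use a reflection order in which $\a_s$ is the \emph{last} element; such orders exist, since $\a_s$ is a simple root. Split the label-increasing chains from $w$ to $w'$ into two classes.

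Chains whose last label is $\a_s$ end with the edge $w's\to w'$, and the remaining part is a label-increasing chain from $w$ to $w's$ using only labels $\ne\a_s$.

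For the other chains, multiply every vertex on the right by $s$. An edge $a\xrightarrow{\tg}as_{\tg}$ becomes $as\xrightarrow{s(\tg)}as\,s_{s(\tg)}$, and $s$ permutes $\Phi^+_{\aff}\setminus\{\a_s\}$. Dyer's standard observation is that transporting $\preceq_{\aff}$ by $s$ and moving $\a_s$ from last to first again gives a reflection order. One then checks, case by case on whether $ws<w$ or $ws>w$, that upward-ness is preserved by the lifting property, exactly as in the proof of Lemma~\ref{lem:length-additive}. This gives a bijection with chains from $ws$ to $w's$, or from $ws$ to $w'$ after adjoining an initial $\a_s$ edge.

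Counting the lengths, with the factor $(\bq^{1/2}-\bq^{-1/2})$ for each extra edge and adjusting the power $\bq^{(\ell(w')-\ell(w))/2}$, reproduces the relations $\widetilde R_{w,w'}=\widetilde R_{ws,w's}$ when $ws<w$, and $\widetilde R_{w,w'}=\bq\widetilde R_{ws,w's}+(\bq-1)\widetilde R_{w,w's}$ when $ws>w$. In the course of this we use independence of the order to replace the transported order by an arbitrary one.

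The main obstacle is the independence of the reflection order. Dyer proves it through the Hecke algebra: he expresses $\widetilde R$ via the coefficients of $T_{w'}$ in terms of the $T_{w^{-1}}^{-1}$, and shows combinatorially that any two reflection orders are connected by elementary moves inside rank-two (dihedral) reflection subgroups. In each such subgroup the count of increasing chains can be checked directly. I would follow that route, reducing to dihedral intervals, which is where the defining convexity property of a reflection order is used.
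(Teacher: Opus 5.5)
\textbf{Gap: independence of the reflection order.} You defer this step, and it is the one that carries the proof.

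Your recursion takes $\a_s$ to be the last element, with $w's<w'$. Transporting by $s$ then gives the formula only for such adapted orders. Even there, the case $ws<w$ works only by cancelling the chains ending in $w's\to w'$ against the transported chains beginning with $ws\to w$. That cancellation compares increasing chains from $w$ to $w's$ under $\preceq_{\aff}$ and under the transported order. So your induction hypothesis must range over all reflection orders, and closing the induction at level $\ell(w')$ requires independence at that level. The paper needs exactly the non-adapted case: it applies the proposition to the orders $\preceq_{x,\aff}$ of Section 5, which are fixed once and for all by a direction vector. These are in general not of your adapted form for $w'=w_{\mu,x}(v)$.

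\textbf{Why your route to independence does not transfer.} The mechanism you propose is a finite-type fact. On a finite $\Phi^+$, reflection orders correspond to reduced words of $w_0$, and any two are joined by braid moves, i.e.\ reversals of rank-two blocks. On $\Phi^+_{\aff}$ there is no longest element, and reflection orders are infinite total orders. Two of them generally disagree on infinitely many pairs lying in infinitely many distinct dihedral reflection subgroups. For example, in type $\tilde A_2$ take the orders of the form \eqref{eq:precaff} built from $h$ with $(h,\a_1)=\pm\epsilon$ and $(h,\a_2)=1$. They disagree on $\{\a_1+m\d,\a_2+n\d\}$ for infinitely many $(m,n)$, and these pairs span pairwise different subsystems of type $A_2$. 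So no finite sequence of elementary moves connects them. Restricting to the finitely many labels occurring in $[w,w']$ would need a further statement: that the restricted orders can be joined through restrictions of reflection orders by moves preserving the chain counts. That statement is the missing content. Hence the claim that the argument ``applies verbatim'' to $W_{\aff}$ fails at the step you yourself flag as the main obstacle.

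\textbf{How to repair it.} The paper does not reprove the statement. It quotes Dyer's theorem, which is stated for an arbitrary Coxeter system and an arbitrary reflection order, and applies it directly to $W_{\aff}$ and $\preceq_{x,\aff}$. You should do the same, unless you can give an inductive step that works for an arbitrary reflection order without changing it.

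\textbf{Smaller corrections to the recursion.}
\begin{enumerate}
\item Right multiplication by $s$ preserves upwardness of every edge whose label is not $\a_s$, so no case split on $ws$ versus $w$ is needed. Indeed, $as$ and $as_{\tg}s$ differ by a reflection, and their lengths can be in the wrong order only if $s_{\tg}=s$.
\item The phrase ``from $ws$ to $w'$ after adjoining an initial $\a_s$ edge'' should read: chains from $ws$ to $w's$ not beginning with the edge $ws\to w$. That edge exists only when $ws<w$, and its contribution is what cancels as described above.
\end{enumerate}
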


By the general theory of the double Bruhat graph and the quantum Yang--Baxter equation, the right-hand side of equation \eqref{eq:R-poly} is independent of the choice of the reflection order (cf. \cite[Corollary 4.9]{Sch25a} and \cite[Theorem 3.1]{BFP}). Hence, it suffices to prove Theorem \ref{thm:R-poly} for one specifically chosen reflection order $\preceq$. 

The key to the proof of Theorem \ref{thm:R-poly} is to specifically construct a reflection order $\preceq$ on $\Phi^+$ and a reflection order $\preceq_{\aff}$ on $\Phi^+_{\aff}$ that are ``compatible'' in the sense that the correspondence induced by affine realization (cf. Proposition \ref{prop:path-corr}) preserves the label-increasing properties.

We construct the reflection orders as follows.

Let $\mathfrak{h}^* = \BZ\Phi_{\aff} \otimes_{\BZ}\BR$ and $\mathfrak{h}^*_0 = \BZ\Phi \otimes_{\BZ}\BR$. Then $\fkh^* = \fkh^*_0 \oplus \BR\d$. Note that $W_0$ acts on $\fkh^*$ and stabilizes $\d$ and $\fkh^*_0$. Recall that $\tilde I = I\sqcup \{0\}$, $\ta_i = \a_i$ for $i\in I$ and $\ta_0 = -\th+\d$. Let 

\begin{gather*} \CU = \left\{\sum_{i \in I} c_i \a_i \mid 0\le c_i \le 1, \sum_{i \in I}  c_i = 1\right\} \subset \fkh^*_0, \\ \tilde \CU = \left\{\sum_{i \in \tilde I} c_i \ta_i \mid 0\le c_i \le 1, \sum_{i \in \tilde I}  c_i = 1\right\} \subset \fkh^*.\end{gather*}
Note that the simplex $\CU$ is a face of the simplex $\tilde\CU$. For any $\b\in\Phi$, define $(c_{\b,i})_{i \in I}$ via $\b = \sum_{i \in I} c_{\b,i} \a_i$. Define $\mathrm{ht}(\b) = \sum_{i \in I}c_{\b,i} \in \BZ$. For any $\tg\in\Phi_{\aff}$, define $(\tilde{c}_{\tg,i})_{i \in \tilde I}$ via $\tg = \sum_{i \in \tilde I} \tilde{c}_{\tg,i} \ta_i$. Define $\widetilde{\mathrm{ht}}(\tg) = \sum_{i \in \tilde I}\tilde{c}_{\tg,i} \in \BZ$. We denote $\pi(\b)=  \frac{\b}{\mathrm{ht}(\b)} \in \CU$ and $\pi(\tg) = \frac{\tg}{\widetilde{\mathrm{ht}}(\tg)} \in \tilde{\CU}$.

Choose a $W_0$-invariant inner product $(-,-)$ on $\fkh^* =  \fkh^*_0 \oplus \BR\d$ such that $\fkh^*_0$ and $\BR\d$ are orthogonal. Choose a vector $h\in \fkh^*_0$ such that the numbers $\left(h,\pi(\b)\right), \b \in \Phi^+$ are distinct and non-zero. Define a total order $\preceq$ on $\Phi^+$ via
\begin{align}\label{eq:prec}
    \b \prec \b' \iff \left(h, \pi(\b)\right) < \left(h, \pi(\b')\right).
\end{align}
This is clearly a reflection order on $\Phi^+$. We call the vector $h \in \fkh^*_0$ a \emph{direction vector} of $\preceq$.

Let $x \in W_0$. We can define a reflection order $\preceq_{x, \aff}$ on $\Phi_{\aff}^+$ such that for any $\tg,\tg' \in \Phi^+_{\aff}$, 
\begin{align}\label{eq:precaff}
\left(x(h), \pi(\tg)\right) < \left(x(h), \pi(\tg')\right) \implies \tg \prec_{x, \aff} \tg'.    
\end{align}
Indeed, inside the orthogonal complement of $x(h)$ in $\fkh^*$, we choose inductively a direction vector $h_1$, a direction vector $h_2$ in its orthogonal complement, and so on (this is a standard way to construct reflection orders, cf. \cite[proof of Proposition 2.3]{Dyer93}). We call $x(h) \in \fkh^*_0$ a \emph{direction vector} of $\preceq_{x,\aff}$.

Figure \ref{fig:4} illustrates the idea of our construction in the case of $C_2$, where $\CU$ is the thick red segment and $\tilde\CU$ is the light red triangle. We choose $x = s_1s_2s_1$. The direction vectors $h$ and $x(h)$ are drawn explicitly. For a sufficiently large $N$, points $\pi(\b_j)$ in $\CU$ correspond to points $\pi(\tg_j)\in\tilde\CU$ that are very close to the thin black line.

\begin{figure}[htbp]
  \centering
  \begin{tikzpicture}[
    scale=.58,
    line cap=round,
    line join=round,
    every node/.style={font=\large},
    auxiliary/.style={
      blue!55!black,dashed,line width=1pt
    },
    thin side/.style={
      red!80!black,line width=1.2pt
    },
    thick side/.style={
      red!80!black,line width=5pt,line cap=butt
    },
    point/.style={
      circle,draw=black,fill=black,inner sep=2.0pt
    }
  ]
    \coordinate (O)  at ( 0.000, 0.000);
    \coordinate (A0) at (-5.520, 1.040);
    \coordinate (A2) at ( 0.020, 6.050);
    \coordinate (Ai) at ( 3.110,-3.260);

    \draw[thin side]  (A0)--(A2)--(Ai);
    \draw[thin side]  (Ai)--(A0);
    \draw[thick side] (A2)--(Ai);

    \coordinate (P)  at (-3.340, 3.240);
    \coordinate (Q)  at (-1.060,-1.120);
    \coordinate (V)  at ( 0.000,-1.950);
    \coordinate (B1) at ( 2.183,-0.467);
    \coordinate (B2) at ( 1.596, 1.302);

    \draw[auxiliary] (A0)--(O)--(B1);
    \draw[auxiliary] (A2)--(O)--(V);
    \draw[auxiliary] (P)--(O)--(Ai);
    \draw[auxiliary] (Q)--(O)--(B2);

    \coordinate (R)  at (-0.050,1.800);
    \coordinate (G1) at (-0.642,0.100);
    \coordinate (G2) at (-0.492,0.520);

    \draw[black,line width=1.2pt] (Q)--(R);

    \node[point] (g1) at (G1) {};
    \node[point] (g2) at (G2) {};
    \node[point] (b1) at (B1) {};
    \node[point] (b2) at (B2) {};

    \node[below left=-1pt and 4pt of g1]
      {$\pi(\widetilde\gamma_1)$};
    \node[above left=-1pt and 4pt of g2]
      {$\pi(\widetilde\gamma_2)$};
    \node[right=5pt of b1]
      {$\pi(\beta_1)$};
    \node[right=5pt of b2]
      {$\pi(\beta_2)$};

    \node[left=8pt] at (A0)
      {$\widetilde\alpha_0$};
    \node[above=8pt] at (A2)
      {$\widetilde\alpha_2$};
    \node[below right=5pt] at (Ai)
      {$\widetilde\alpha_1$};

    \draw[->,line width=1.2pt]
      (-8.350,-0.150)--(-6.800,3.900)
      node[above right] {$x(h)$};

    \draw[->,line width=1.2pt]
      (5.750,0.000)--(4.400,4.100)
      node[above right] {$h$};
  \end{tikzpicture}

  \caption{$C_2$ example for $\preceq$ and $\preceq_{x,\aff}$
    with $x=s_1s_2s_1$.}
  \label{fig:4}
\end{figure}

The next proposition shows that these reflection orders are compatible with the affine realization.

\begin{proposition}\label{prop:label-increasing}
Let $x \in W_0$. Let $\preceq$ and $\preceq_{x,\aff}$ be as above. Let $\rho^{\vee}$ be the half-sum of positive coroots. Then, for any sufficiently large even number $N$, the affine realization $\Psi_{N\rho^{\vee},x}$ induces a bijection between the set of label-increasing paths (with respect to $\preceq$) of minimal weight from $u$ to $v$ in $\TBG(\Phi)$ and the set of label-increasing upward Bruhat chains (with respect to $\preceq_{x,\aff}$) from $w_{N\rho^{\vee},x}(u)$ to $w_{N\rho^{\vee},x}(v)$ in $W_{\aff}$, for any $u,v \in W_0$ such that $u\le_x v$.  
\end{proposition}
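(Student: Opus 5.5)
By Proposition~\ref{prop:path-corr}(2), $\Psi_{N\rho^\vee,x}$ already gives a bijection between minimal weight paths from $u$ to $v$ in $\TBG(\Phi)$ and upward Bruhat chains from $w_{N\rho^\vee,x}(u)$ to $w_{N\rho^\vee,x}(v)$. It therefore suffices to show that this bijection preserves the label-increasing property. The path is increasing when its labels $\b_j$ increase for $\preceq$; the chain is increasing when its labels $\tg_j$ increase for $\preceq_{x,\aff}$.

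I will prove this edgewise, comparing two consecutive labels. By Proposition~\ref{prop:path-corr}(1), the edge $u_{j-1}\xrightarrow{\b_j}u_j$ corresponds to
\[
\tg_j=x(\b_j)+m_j\d,\qquad m_j=\d_{\Phi^-}(u_{j-1}(\b_j))+\<N\rho^\vee+\wt(x,u_{j-1}),\b_j\>.
\]
Here $\<N\rho^\vee,\b_j\>=N\,\mathrm{ht}(\b_j)$. By Lemma~\ref{lem:QBG2}(4), the remaining part $c_j:=\d_{\Phi^-}(\cdot)+\<\wt(x,u_{j-1}),\b_j\>$ is bounded independently of $N$.

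Next I compute $\pi(\tg_j)$ to leading order in $N$. Since $\d=\ta_0+\th$, the $\tilde I$-height of $\g+m\d$ is $\mathrm{ht}(\g)+m\,\widetilde{\mathrm{ht}}(\d)$, and $\widetilde{\mathrm{ht}}(\d)=1+\mathrm{ht}(\th)$. Hence
\[
\widetilde{\mathrm{ht}}(\tg_j)=\mathrm{ht}(x\b_j)+(N\mathrm{ht}(\b_j)+c_j)\,\widetilde{\mathrm{ht}}(\d).
\]
The inner product with $x(h)$ sees only the $\fkh_0^*$-component, because $\fkh_0^*\perp\d$ and $x(h)\in\fkh_0^*$. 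By $W_0$-invariance,
\[
(x(h),\pi(\tg_j))=\frac{(h,\b_j)}{\widetilde{\mathrm{ht}}(\tg_j)}=\frac{(h,\pi(\b_j))}{\widetilde{\mathrm{ht}}(\d)}\cdot\frac{1}{N}\bigl(1+O(1/N)\bigr).
\]

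Now I compare two labels $\b\ne\b'$. The numbers $(h,\pi(\b))$ are distinct and nonzero, and there are only finitely many roots, finitely many possible values of $c_j$, and finitely many $u,v,x$. So for $N$ large, $(h,\pi(\b))<(h,\pi(\b'))$ implies $(x(h),\pi(\tg))<(x(h),\pi(\tg'))$ after multiplying by the positive factor $N\,\widetilde{\mathrm{ht}}(\d)$; the $O(1/N^2)$ error terms are then dominated by the fixed gap between the leading terms. By \eqref{eq:precaff}, this gives $\tg\prec_{x,\aff}\tg'$. Conversely, if $\b\succ\b'$ the same estimate gives $\tg\succ_{x,\aff}\tg'$, since both orders are total.

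I expect the main obstacle to be the case where two consecutive labels share the same root, $\b_j=\b_{j+1}$, with different $m$-values. The path condition only requires $\b_j\preceq\b_{j+1}$, so such repeats are not excluded a priori; on the affine side, however, $x(\b)+m\d$ and $x(\b)+m'\d$ have equal leading terms, and the order between them is decided by $h_1,h_2,\ldots$. My first attempt is to rule this situation out directly. Consecutive edges with the same label would give $u_{j+1}=u_{j-1}s_\b s_\b=u_{j-1}$. That would be a nontrivial cycle inside $[u,v]_{\tilt}$, contradicting the strict increase $u_{j-1}<_{u,v}u_j<_{u,v}u_{j+1}$ from Lemma~\ref{lem:TBG}. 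Likewise, an upward Bruhat chain cannot repeat the same reflection consecutively. Hence labels within a path are strictly increasing and no ties occur.

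The requirement that $N$ be even is presumably needed only so that $N\rho^\vee\in Q^\vee$. Taking $N$ large also makes $N\rho^\vee$ sufficiently dominant regular, so Proposition~\ref{prop:path-corr} applies. With strict increase in place, the edgewise comparison transports label-increasing paths to label-increasing chains and back, which gives the bijection.
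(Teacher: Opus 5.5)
Your proposal is correct and follows essentially the same route as the paper. Both arguments reduce to Proposition~\ref{prop:path-corr}(2), use the label formula to show $N\,\widetilde{\mathrm{ht}}(\d)\,(x(h),\pi(\tg_{j}))\to(h,\pi(\b_j))$ uniformly over the finitely many configurations, and exclude equal consecutive labels. The one cosmetic difference is that you rule out $\b_j=\b_{j+1}$ via antisymmetry of $\le_{u,v}$ from Lemma~\ref{lem:TBG}(2), while the paper notes directly that the resulting two-edge loop has weight $\b_j^\vee$, contradicting minimality of the weight.
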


\begin{proof}
Let $N$ be a large even number such that $N\rho^{\vee} \in Q^{\vee}$ is a sufficiently dominant regular cocharacter. Let $u,v\in W_0$ satisfy $u\le_x v$, and let
\[
p:
u_0=u\xrightarrow{\b_1}u_1
\xrightarrow{\b_2}\cdots
\xrightarrow{\b_r}u_r=v
\]
be a minimal weight path in $\TBG(\Phi)$. By Proposition \ref{prop:path-corr}(2), the affine realization
$\Psi_{N\rho^\vee,x}$ sends $p$ to the upward Bruhat chain
\[
\begin{aligned}
w_{N\rho^\vee,x}(u)
=w_{N\rho^\vee,x}(u_0)
&\xrightarrow{\tg_{1,N}}w_{N\rho^\vee,x}(u_1)
\xrightarrow{\tg_{2,N}}\cdots \xrightarrow{\tg_{r,N}}w_{N\rho^\vee,x}(u_r)
=w_{N\rho^\vee,x}(v).
\end{aligned}
\]
By the label formula in Proposition \ref{prop:path-corr}(1), the
$j$-th label of this chain is
\begin{align*}
\tg_{j, N}
&=
x(\b_j)
+
\left(
\d_{\Phi^-}(u_{j-1}(\b_j))
+\<N\rho^\vee+\wt(x,u_{j-1}),\b_j\>
\right)\d.
\end{align*}
Note that $\<\rho^{\vee},\b_j\> = \mathrm{ht}(\b_j)$. Denote $\widetilde{\mathrm{ht}}(\d) := \widetilde{\mathrm{ht}}(\ta_0) + \widetilde{\mathrm{ht}}(\th)= 1 + \mathrm{ht}(\th)$. Then $\widetilde{\mathrm{ht}}(\tg_{j,N}) = N\mathrm{ht}(\b_j)\widetilde{\mathrm{ht}}(\d)+O(1)$ as $N \to \infty$.

Since $\fkh^*_0$ and $\BR\d$ are orthogonal, and the inner product is
$W_0$-invariant, the above equation implies
\begin{align*}
\lim_{N\to\infty} N\widetilde{\mathrm{ht}}(\d)
\left(x(h),\pi(\tg_{j,N})\right) &=
\lim_{N\to\infty} \frac{N\widetilde{\mathrm{ht}}(\d)(x(h),x(\b_j))}
{\widetilde{\mathrm{ht}}(\tg_{j,N})}\\
&=\lim_{N\to\infty} 
\frac{N \widetilde{\mathrm{ht}}(\d)(h,\b_j)}
{N \mathrm{ht}(\b_j)\widetilde{\mathrm{ht}}(\d) + O(1)}\\
&=
\frac{(h,\b_j)}{\mathrm{ht}(\b_j)}
=
\left(h,\pi(\b_j)\right).
\end{align*}

Notice that $\b_{j-1}\ne\b_j$ for $2\le j\le r$. Indeed, two consecutive edges with the
same label form a two-edge loop of weight $\b_j^\vee$, contradicting the assumption that $p$ is a minimal weight path. By the choice of $h$, the numbers $\left(h,\pi(\b)\right), \b\in\Phi^+$ are pairwise distinct. Hence, for all sufficiently large $N$,
\[
\left(h,\pi(\b_{j-1})\right)
<
\left(h,\pi(\b_j)\right)
\quad\Longleftrightarrow\quad
\left(x(h),\pi(\tg_{j-1,N})\right)
<
\left(x(h),\pi(\tg_{j,N})\right).
\]
Therefore, by \eqref{eq:prec} and \eqref{eq:precaff}, we have
\[
\b_{j-1}\prec\b_j
\quad\Longleftrightarrow\quad
\tg_{j-1,N}\prec_{x,\aff}\tg_{j,N}.
\]
It follows that $p$ is label-increasing if and only if its corresponding upward
Bruhat chain is label-increasing.

We choose one sufficiently large even number $N$ such that the above estimates work uniformly for all $u,v$ with $u\le_x v$ and all minimal weight paths from $u$ to $v$. Then the bijection in Proposition \ref{prop:path-corr}(2) restricts
to the desired bijection.
\end{proof}

Now Theorem \ref{thm:R-poly} follows from Proposition \ref{thm:dyer} and Proposition~\ref{prop:label-increasing}.

\subsection{Comparison with the type $A$ construction}

We conjecture that the tilted $R$-polynomials agree with those
arising from the type $A$ construction of Gao--Gao--Gao.

\begin{conjecture}\label{conj:R-polynomial-comparison}
In type $A$, the tilted $R$-polynomial
$R_{u,v}^{\mathrm{tilt}}(\bq)$ coincides with the polynomial
$R_{u,v}^{\mathrm{tilt,GGG}}(\bq)$ of Gao--Gao--Gao
\cite{GGG26}.
\end{conjecture}


\begin{example} Assume $G = \GL_6$. We have $W_0 = S_6$, the symmetric group on $6$ elements. Let $v = 246513$ and $u = 512346$. This is the case considered in \cite[Example 6.4]{GGG26}. By the explicit calculation therein, there are four distinguished subexpressions. By \cite[Proposition 6.18]{GGG26}, we have 
\begin{align*}
R_{u,v}^{\mathrm{tilt,GGG}}(\bq) = (\bq-1)^8 + 2(\bq-1)^6 \bq + (\bq-1)^4 \bq^2.    
\end{align*}
Now take $x = u$ and $\mu =[120,72,24,-24,-72,-120]$. We have $\ell(u,v)=8$ and $\wt(u,v) = [1,0,-1,0,0,0]$. 
By computer calculation (using the recurrence relation), we get  
\begin{align*}
\tilde{R}_{w_{\mu,x}(u),w_{\mu,x}(v)}(\bq)= 1 - 6 \bq + 17 \bq^2 - 30 \bq^3 + 36 \bq^4 - 30 \bq^5 + 17 \bq^6 -
6 \bq^7 + \bq^8.
\end{align*}
This coincides with $R_{u,v}^{\mathrm{tilt,GGG}}(\bq)$.
\end{example}

\section{Total positivity and geometric realization}
\label{sec:TNN}\label{sec:tnn-flag}

Throughout this section, we take $k=\BR$.

We restrict our attention to the real points $G(\BR)$ and $\mathcal B(\BR)$. Define $U^-_{\geq0}\subset U^-(\BR)$ as the monoid generated by $y_i(a)$ for all $i\in I$ and $a\in\BR_{>0}$. The \emph{totally nonnegative flag variety} is defined as the Hausdorff closure:
\[
\mathcal B_{\geq0}
:=
\overline{U^-_{\geq0}\cdot B/B}
\subset\mathcal B(\BR).
\]

For any pair $u, v \in W_0$ with $u \le v$, the \emph{totally positive Richardson cell} is defined as:
\[ \mathring{\mathcal{B}}_{u,v, >0} := \mathring{\mathcal{B}}_{u,v}(\mathbb{R}) \cap \mathcal{B}_{\ge 0}. \] Then we have $\mathcal{B}_{\ge 0} = \bigsqcup_{u \le v} \mathring{\mathcal{B}}_{u, v, >0}$.
The topological closure of this cell inside $\mathcal{B}_{\ge 0}$ is the \emph{totally nonnegative Richardson variety}, denoted by $\mathcal{B}_{u,v, \ge 0}$. The following algebraic and topological properties were established by Lusztig \cite{Lusz94} and Rietsch \cite{Riet99} and \cite{Riet06}. 

\begin{theorem}
Let $u, v \in W_0$ with $u \le v$. Then 
\begin{enumerate}
    \item The totally positive Richardson cell $\mathring{\mathcal{B}}_{u,v, >0}$ is a connected component of $\mathring{\mathcal{B}}_{u, v}(\BR)$.
    \item The totally positive Richardson cell $\mathring{\mathcal{B}}_{u,v, >0}$ is homeomorphic to $\mathbb R_{>0}^{\ell(v)-\ell(u)}$.
    
    \item The totally nonnegative Richardson variety satisfies
    \[ \mathcal{B}_{u,v, \ge 0}=\bigsqcup_{u \le x \le y \le v} \mathring{\mathcal{B}}_{x,y, >0}. \]
\end{enumerate}
\end{theorem}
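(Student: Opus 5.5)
This is the classical theorem of Lusztig and Rietsch, and I would prove it by combining the Marsh--Rietsch parametrization with Lusztig's positivity results. Throughout I fix $u\le v$ and a reduced expression $\ud v=s_{i_1}\cdots s_{i_n}$.

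\textbf{Step 1: the positive subexpression.} There is a unique positive distinguished subexpression $\ud u_+$ for $u$ in $\ud v$. I would build it greedily from the right: at each step $j$, going from $n$ down to $1$, multiply by $s_{i_j}$ exactly when this decreases the length of the current element. By construction $J^-_{\ud u_+}=\emptyset$ and $|J^\circ_{\ud u_+}|=\ell(v)-\ell(u)$.

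\textbf{Step 2: part (2).} By the finite analogue of Theorem~\ref{thm:MR} (i.e.\ \cite[Proposition 5.2]{MR04}), the Deodhar cell $\CD_{\ud u_+,\ud v}$ is parametrized by $(p_j)_{j\in J^\circ}\in (k^*)^{\ell(v)-\ell(u)}$. It is open and dense in $\mathring{\CB}_{u,v}$, since it is the unique cell of top dimension. The central claim is
\[
\mathring{\CB}_{u,v,>0}=\{\,g_1\cdots g_n B \mid g_j=\dot s_{i_j}\ (j\in J^+),\ g_j=y_{i_j}(p_j),\ p_j>0\ (j\in J^\circ)\,\}.
\]
For the inclusion $\supseteq$: each such point is a limit of points of $U^-_{\ge0}B/B$. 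To see this, replace each $\dot s_{i_j}$ by $y_{i_j}(t)$ and let $t\to\infty$; this is legitimate because $y_i(t)B=x_i(t^{-1})\dot s_i^{\pm1}(\text{torus})B$. For the inclusion $\subseteq$, I would use Lusztig's characterization of $\CB_{\ge0}$ by nonnegativity of the flag minors $\Delta_{w\omega_i,\omega_i}$ (canonical basis coefficients). The chamber minors along $\ud u_+$ are Laurent monomials in the $p_j$ with positive coefficients, so nonnegativity forces every $p_j>0$. Given the claim, the parametrization is a homeomorphism onto $\BR_{>0}^{\ell(v)-\ell(u)}$, which proves (2).

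\textbf{Step 3: part (1).} The set $\mathring{\CB}_{u,v,>0}$ is connected by (2). It is open in $\mathring{\CB}_{u,v}(\BR)$, being cut out inside the open Deodhar cell by the sign conditions $p_j>0$. It remains to show it is closed in $\mathring{\CB}_{u,v}(\BR)$. Take a limit point lying in the open Richardson variety. It is totally nonnegative, and I would show that the minors defining the open Richardson locus which are positive on the cell cannot vanish at it. Hence it lies in the same Deodhar cell with all $p_j\ge0$. Since the $p_j$ are also nonzero there, they are strictly positive.

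\textbf{Step 4: part (3).} First, $\CB_{\ge0}=\bigsqcup_{x\le y}\mathring{\CB}_{x,y,>0}$, because Richardson cells partition $\CB$. The closure of $\mathring{\CB}_{u,v,>0}$ lies in $\CB_{u,v}\cap\CB_{\ge0}$, which gives $\subseteq$. For $\supseteq$, given $u\le x\le y\le v$, I would exhibit $\mathring{\CB}_{x,y,>0}$ inside the closure by degenerating parameters. Letting some $p_j\to0$ drops factors and lowers $y$. Letting some $p_j\to\infty$ converts $y_{i_j}(p_j)$ into $\dot s_{i_j}$ times positive factors and raises $x$. One then argues by induction on $\ell(v)-\ell(u)$ along covering relations, using Rietsch's chain argument.

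The main obstacle I expect is the closedness in Step 3 together with the reverse inclusion in Step 4, that is, controlling limits of positive points precisely. Here I would rely on Lusztig's result that $\CB_{\ge0}$ is cut out by nonnegativity of the canonical-basis coordinates, and on Rietsch's analysis of how $\mathring{\CB}_{u,v,>0}$ degenerates to lower cells.
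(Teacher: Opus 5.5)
The paper gives no proof of this theorem; it quotes it from Lusztig and Rietsch. Where your outline goes beyond citing those papers, it has a genuine gap, and the gap is exactly the content of Rietsch's cell theorem.

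In Step~2, the inclusion $\subseteq$ reads off the coordinates $p_j$ of a point of $\mathring{\mathcal{B}}_{u,v}(\mathbb{R})\cap\mathcal{B}_{\ge0}$. It therefore presupposes that the point lies in the open Deodhar cell $\mathcal{D}_{\underline{u}_+,\underline{v}}$. A priori the point could lie in one of the smaller cells $\mathcal{D}_{\underline{x},\underline{v}}$ with $J^-_{\underline{x}}\neq\emptyset$. There the coordinates $p_j$ are not defined, and your sign argument on the chamber minors says nothing. Proving that totally nonnegative points avoid all these cells is what makes $\mathring{\mathcal{B}}_{u,v,>0}$ a cell; it is the nontrivial direction of the Marsh--Rietsch positive parametrization you are reproducing. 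Invoking it here cites the theorem rather than proving it.

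Step~3 does not repair this. The minors that cut out $\mathring{\mathcal{B}}_{u,v}$ (those attached to $u$ and $v$) are automatically nonzero at any point of the open Richardson variety. That does not place the point in the open Deodhar cell, so ``hence it lies in the same Deodhar cell'' does not follow. You have also misplaced the difficulty. Closedness of $\mathring{\mathcal{B}}_{u,v,>0}$ in $\mathring{\mathcal{B}}_{u,v}(\mathbb{R})$ is immediate, because $\mathcal{B}_{\ge0}$ is closed by definition. What needs proof is openness, and you obtain it only from the unproved inclusion.

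The inclusion $\supseteq$ in Step~2 is also argued incorrectly. The identity $y_i(t)B=x_i(t^{-1})\dot s_iB$ helps only when $y_i(t)$ is the last factor. For an interior factor, the torus part conjugates the later factors and rescales their parameters by powers of $t$. Take type $A_2$ with $\underline{v}=s_2s_1$ and $u=s_2$. The positive cell is $\{\dot s_2y_1(p)B\mid p>0\}$, whose line is $\langle e_1+pe_3\rangle$. The line of $y_2(t)y_1(p)B$ is $\langle e_1+pe_2+tpe_3\rangle$, so $y_2(t)y_1(p)B\to\dot s_2\dot s_1B$, not the target point. What does converge is $y_2(t)y_1(p/t)B\to\dot s_2y_1(p)B$. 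In general the compensating rescalings require an actual argument.

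Finally, the reverse inclusion in Step~4 and the degeneration heuristics there are Rietsch's closure theorem, to which you defer. Deferring to Lusztig and Rietsch is legitimate, and it is exactly what the paper does. But then your text is a citation plus an incomplete sketch, not a proof.

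A smaller point: Lusztig's criterion for $\mathcal{B}_{\ge0}$ involves all canonical basis coefficients, not only the extremal minors $\Delta_{w\omega_i,\omega_i}$. In general only the implication ``totally nonnegative $\Rightarrow$ these minors are $\ge0$'' is available, though that is all you use.
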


The following result was conjectured by Postnikov and Williams in \cite{LW} and was established by Galashin--Karp--Lam in \cite{GKL22}.

\begin{theorem}
For any $u \le v$ in $W_0$, the totally nonnegative Richardson variety $\mathcal{B}_{u,v, \ge 0}$ forms a regular CW-complex that is homeomorphic to a closed real ball. The face poset of this regular CW-complex is canonically isomorphic to the poset of subintervals in $[u,v]$.
\end{theorem}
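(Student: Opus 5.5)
The plan is an induction on $\ell(v)-\ell(u)$ that combines three inputs: the cell structure of Lusztig and Rietsch, the combinatorics of Bruhat intervals, and a topological recognition criterion for balls. By the preceding theorem, $\mathcal{B}_{u,v,\ge0}$ is a compact Hausdorff space. It is the disjoint union of the cells $\mathring{\mathcal{B}}_{x,y,>0}\cong\mathbb R_{>0}^{\ell(y)-\ell(x)}$, indexed by subintervals $[x,y]\subseteq[u,v]$. The closure of each cell is $\mathcal{B}_{x,y,\ge0}$, which is the union of the cells indexed by subintervals of $[x,y]$. So the face poset is already the poset of subintervals, ordered by inclusion. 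What remains is to show two things: the characteristic maps extend to homeomorphisms from closed balls, which is regularity; and the total space is a ball.

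For the combinatorial input I would use the following facts. Bruhat intervals are EL-shellable by Dyer, via reflection orders, and thin. Hence the order complex of the poset of proper subintervals of $[u,v]$, with the top element removed, is a shellable thin complex, and so a PL sphere of dimension $\ell(v)-\ell(u)-1$. By the induction hypothesis, every proper closed face $\mathcal{B}_{x,y,\ge0}$ is a regular CW ball. The boundary $\partial:=\mathcal{B}_{u,v,\ge0}\setminus\mathring{\mathcal{B}}_{u,v,>0}$ is therefore a regular CW complex whose face poset is the poset of proper subintervals. By Björner's theorem, $\partial$ is homeomorphic to the order complex, hence to a sphere $S^{n-1}$, where $n=\ell(v)-\ell(u)$.

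The geometric input, and the step I expect to be the main obstacle, is to show that $\mathcal{B}_{u,v,\ge0}$ is the cone over $\partial$. For this I would construct a contractive flow, following the Galashin--Karp--Lam strategy. First choose a chart: translate by $\dot u$ and intersect with an open cell $\dot u\, U^- B/B$. This identifies a neighborhood of the torus fixed point $u B$ in $\mathcal{B}_{u,v}$ with a closed subvariety of an affine space. Then take a regular dominant one-parameter subgroup $t\mapsto\rho^\vee(t)\in T(\BR_{>0})$. It preserves every totally positive cell, and as $t\to0$ it contracts the chart to the fixed point $uB$.

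The difficulty is that this flow contracts only the part of $\mathcal{B}_{u,v,\ge0}$ lying in the chart, not the whole space. The first thing I would try is to use the product structure $\mathcal{B}_{u,v,\ge0}\cap(\text{chart})\cong(\text{cell of }[u,x])\times(\text{link})$, in the style of Fomin--Shapiro stratified spaces (and Bao--He). The aim is to verify the hypotheses of the GKL topological lemma. That lemma says: if $X\subset\BR^N$ is compact, $f\colon\BR\times X\to X$ is a flow contracting $X$ to a point $p$, and there is a continuous function $\phi$ on $X\setminus\{p\}$ that increases strictly along flow lines, then $X\setminus\{p\}\cong\phi^{-1}(c)\times\BR$. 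Consequently $X$ is the cone over the link $\phi^{-1}(c)$.

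Applying this lemma twice gives the result. First, near each boundary point I would identify the link with a lower-dimensional totally nonnegative Richardson variety, which is a ball or sphere by induction; this shows $\mathcal{B}_{u,v,\ge0}$ is a topological manifold with boundary $\partial$. Second, globally I would show that $\mathcal{B}_{u,v,\ge0}$ is the cone over the sphere $\partial$. Together these make $\mathcal{B}_{u,v,\ge0}$ a closed $n$-ball. Its interior is the open cell, so the characteristic map extends to a homeomorphism from the closed ball, and the CW structure is regular.
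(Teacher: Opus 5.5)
The paper gives no proof of this statement; it is quoted from Galashin--Karp--Lam \cite{GKL22}. Your outline has roughly the shape of their argument: Bj\"orner's theorem plus induction for the boundary sphere, contractive flows for local conical structure, and a topological recognition step at the end. However, the local step does not work as you set it up. The links that occur are not lower-dimensional totally nonnegative Richardson varieties, so your induction hypothesis says nothing about them. Consider the vertex $uB$. There $\rho^\vee(t)$ does contract the open star $\mathcal{B}_{u,v,\ge0}\cap\mathring{\mathcal{B}}^u=\bigsqcup_{u\le y\le v}\mathring{\mathcal{B}}_{u,y,>0}$, with $t\to\infty$ if $\rho^\vee(t)$ acts on $U_\alpha$ by $t^{\langle\rho^\vee,\alpha\rangle}$; the full chart $\dot uU^-B/B$ is not contracted in general. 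Even so, the link of $uB$ has face poset $(u,v]$, which is not a poset of subintervals. In type $A_2$ with $u=1$, $v=w_0$ the link is a bigon: vertices from $s_1,s_2$, edges from $s_1s_2,s_2s_1$, one $2$-cell. Every $2$-dimensional totally nonnegative Richardson variety is a square. At a point of $\mathring{\mathcal{B}}_{x,y,>0}$ the normal link should be the join of two such Fomin--Shapiro-type links, one for $[u,x]$ and one for $[y,v]$. Even to get that description you need a decomposition of a neighbourhood of the whole stratum as (stratum)$\times$(normal cone), and conical structure at the torus-fixed points alone does not give this. The actual content of the theorem is proving this product structure and running the induction over a class of stratified spaces closed under passing to links: the Fomin--Shapiro atlases of \cite{GKL22}, or the product structure of \cite{BH24} in the Kac--Moody setting. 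Your proposal leaves exactly this as ``the first thing I would try.''

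The global cone step has no mechanism behind it. The fixed points of $\rho^\vee(t)$ are the points $xB$, all of which lie in $\partial$. Each point of $\mathring{\mathcal{B}}_{x,y,>0}$ flows to $xB$ at one end and to $yB$ at the other. Nothing in your setup yields a flow preserving $\mathcal{B}_{u,v,\ge0}$ with an attracting fixed point in the open cell. A contractive flow with attracting point in $\mathcal{B}_{>0}$ exists for all of $\mathcal{B}_{\ge0}$, but it does not preserve Richardson varieties. Your formulation of the topological lemma is also off: if $X$ is compact and $X\setminus\{p\}\cong L\times\BR$, then $X$ is the one-point compactification of $L\times\BR$, not the cone over $L$. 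This step is in any case unnecessary. Once $\mathcal{B}_{u,v,\ge0}$ is known to be a compact topological manifold with boundary $\partial\cong S^{n-1}$ and interior $\mathring{\mathcal{B}}_{u,v,>0}\cong\BR^n$, it is a closed ball by standard topology (the generalized Poincar\'e conjecture, or collaring together with the generalized Schoenflies theorem). So the gap is concentrated in the local step. A smaller point: the fact that the poset of subintervals of $[u,v]$, with a minimum adjoined, is graded, thin and shellable is Williams' theorem \cite{LW}. It does not follow formally from Dyer's EL-shellability of $[u,v]$.
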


We now pass to the affine flag variety.

For $i \in \tilde{I}$, let $\tilde x_i$, $\tilde y_i$ be as in \S\ref{sec:4.1}. Let $\widetilde U^-_{\geq 0}\subset LG(\BR)$ be the monoid generated
by $\widetilde y_i(a)$ for $i\in\tilde I$ and $a\in\BR_{>0}$. Define
the \emph{totally nonnegative part} of the affine flag variety by
\begin{align*}
\widetilde{\CB}_{\geq 0}
=
\overline{\widetilde U^-_{\geq 0}\cdot\CI/\CI}
\subseteq \widetilde{\CB}(\BR),
\end{align*}
where the closure is taken in the Hausdorff topology.

For $w\leq w'$ in $W_{\aff}$, define
\begin{align*}
\widetilde{\CB}_{w,w',>0}=
\widetilde{\CB}_{w,w'}^\circ(\BR)
\cap\widetilde{\CB}_{\geq 0},\quad
\widetilde{\CB}_{w,w',\geq 0}
=
\overline{\widetilde{\CB}_{w,w',>0}}.
\end{align*}

Total positivity for affine flag varieties is substantially subtler than in finite type, as the duality between opposite Borel subgroups used in the reductive case is unavailable in the Kac--Moody setting. The joint work of Bao and the first-named author \cite{BH24} established the following result by developing a product structure for totally positive Richardson varieties, enabling an inductive study of their closures and local topology.

\begin{theorem}[{\cite[Proposition~5.2 and Theorem~5.3]{BH24}}]
\label{thm:affine-TNN}
Let $w\leq w'$ in $W_{\aff}$. Then for each $w \le w_1\le w_2 \le w'$, $\widetilde{\CB}_{w_1,w_2,>0}$ is homeomorphic to
$\BR_{>0}^{\ell(w_2)-\ell(w_1)}$, and
\begin{align*}
\widetilde{\CB}_{w,w',\geq 0}
=
\bigsqcup_{w\leq w_1\leq w_2\leq w'}
\widetilde{\CB}_{w_1,w_2,>0}.
\end{align*}
This is a regular CW-complex homeomorphic to a closed ball of
dimension $\ell(w')-\ell(w)$. Its face poset is the poset of
subintervals of $[w,w']$, ordered by inclusion.
\end{theorem}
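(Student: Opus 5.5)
The plan is to follow the strategy of Galashin--Karp--Lam \cite{GKL22} for finite flag varieties, replacing every step that uses the symmetry $B\leftrightarrow B^-$ with a statement proved directly in the loop group. The proof has three stages: the open cells, the closure stratification, and regularity.

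\emph{Stage 1: the open cells.} Fix $w_1\le w_2$ and a reduced expression $\ud{w_2}$. We use the Deodhar decomposition of Theorem~\ref{thm:MR}, which holds verbatim for $LG$. The first goal is the affine analogue of Marsh--Rietsch \cite{MR04}: we want
\[
\widetilde{\CB}_{w_1,w_2,>0}=\mathcal D_{\ud{x}^+,\ud{w_2}}\bigl|_{p_j>0},
\]
where $\ud{x}^+$ is the unique positive distinguished subexpression for $w_1$ in $\ud{w_2}$, and the parameters $p_j$ for $j\in J^\circ_{\ud{x}^+}$ are restricted to be positive. Two things must be shown.
\begin{enumerate}
\item The points $\dot s_{i_1}\cdots\tilde y_{i_j}(p_j)\cdots\CI$ lie in $\widetilde{\CB}_{\ge0}$. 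To see this, rewrite each $\dot s_i$ occurring at a position $j\in J^+$ as a limit of $\tilde y_i(a)$ with $a\to\infty$, using the rank-one relation in $\varphi_i(\mathrm{SL}_2)$.
\item No other Deodhar cell meets $\widetilde{\CB}_{\ge0}$. This follows from the sign pattern of the generalized minors, namely the affine analogues of the chamber minors of Marsh--Rietsch.
\end{enumerate}
Since $|J^\circ_{\ud{x}^+}|=\ell(w_2)-\ell(w_1)$, this gives the homeomorphism $\widetilde{\CB}_{w_1,w_2,>0}\cong\BR_{>0}^{\ell(w_2)-\ell(w_1)}$.

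\emph{Stage 2: the closure stratification.} We must show that $\overline{\widetilde{\CB}_{w,w',>0}}$ is exactly the union of the cells $\widetilde{\CB}_{w_1,w_2,>0}$ with $w\le w_1\le w_2\le w'$.
\begin{enumerate}
\item The inclusion ``$\subseteq$'' follows from the closure relations among affine Richardson varieties, together with the fact that $\widetilde{\CB}_{\ge 0}$ is closed and is the disjoint union of all positive cells.
\item For ``$\supseteq$'' it suffices to treat covers: $w_1\lessdot w_1'$ or $w_2'\lessdot w_2$. In each case one degenerates a single positive parameter of the Marsh--Rietsch parametrization to $0$ or to $\infty$, using subword-complex combinatorics and choosing suitable reduced words.
\end{enumerate}

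\emph{Stage 3: regularity.} This is the heart of the argument. We will invoke the topological criterion of \cite{GKL22}: a stratified compact space is a regular CW complex homeomorphic to a ball provided three conditions hold.
\begin{enumerate}
\item The face poset is graded, thin and shellable. Intervals in $W_{\aff}$ have this property by Björner--Wachs and Dyer.
\item There is a continuous contractive flow on the space.
\item The space carries a Fomin--Shapiro type atlas. That is, each point of $\widetilde{\CB}_{w_1,w_2,>0}$ has a neighborhood in $\widetilde{\CB}_{w,w',\ge0}$ that is stratified-homeomorphic to $\widetilde{\CB}_{w_1,w_2,>0}\times L$. Here $L$ is a cone over a space that is itself, inductively, a totally nonnegative Richardson-type object indexed by a smaller poset.
\end{enumerate}
For the flow, the natural choice is the action of a regular dominant one-parameter subgroup of $T$ that contracts toward $\dot w'\CI$, possibly twisted by loop rotation.

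The main obstacle is the product structure in the atlas. In finite type, GKL obtain it by exchanging $B$ and $B^-$ and using $U^-$ together with its opposite. Loop groups lack this duality, because $\CI^-$ is not conjugate to $\CI$ in the required way. I would first try to construct an explicit product map
\[
\widetilde{\CB}_{w_1,w_2,>0}\times\bigl(\text{slice}\bigr)\to \widetilde{\CB}_{w,w',\ge0}.
\]
Its slice factor would be built from $\dot{w}_1^{-1}\CI^-\dot w_1\cap U$ and from a totally positive transversal to $\CI\dot w_2\CI/\CI$ inside $\CI\dot w'\CI/\CI$. The plan is then to verify, using the Deodhar parametrizations from Stage 1, three facts about this map: it preserves total nonnegativity, it respects the strata, and it is a local homeomorphism onto a neighborhood. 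Once this product structure is established, the induction on $\ell(w')-\ell(w)$, combined with the GKL criterion, yields both the ball statement and the identification of the face poset with the subintervals of $[w,w']$.
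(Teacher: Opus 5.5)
\textbf{Gap: Stage 3 is a plan, not an argument.} Note first that the paper does not prove this statement. It imports it from \cite[Proposition~5.2 and Theorem~5.3]{BH24}, and the parametrization of cells by positive distinguished subexpressions is \cite[Theorem~4.10]{BH21}. Your Stages~1 and~2 are a plausible outline of those known inputs. The ``$\supseteq$'' direction of Stage~2 and item~2 of Stage~1 are only gestured at, and in the Kac--Moody setting they are genuine results, not routine adaptations of Marsh--Rietsch and Rietsch.

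Stage~3 is where the regular CW and ball statements live. There you say you ``would first try to construct'' a slice map, and then verify three things:
\begin{enumerate}
\item it preserves total nonnegativity;
\item it respects the strata;
\item it is a local homeomorphism onto a neighborhood of a point of $\widetilde{\CB}_{w_1,w_2,>0}$ in $\widetilde{\CB}_{w,w',\geq0}$.
\end{enumerate}
None of these is established. None follows from the Deodhar parametrizations of Stage~1, which describe each cell separately and say nothing about how cells of different dimensions fit together near a point. So the Fomin--Shapiro atlas, and with it the inductive proof that links are spheres, is assumed rather than proved.

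\textbf{Why this step cannot be filled routinely.} Your slice has two directions. One part is transverse to $\CI\dot w_2\CI/\CI$, along which $w_2$ increases to $w'$. The other is built from $\CI^-$-data at $\dot w_1$, along which $w_1$ decreases to $w$. In finite type, this is where the argument can exploit the symmetry between $B$ and $B^-$ coming from $\dot w_0$, which is compatible with $\mathcal B_{\geq 0}$. In $\widetilde{\CB}$ that symmetry is unavailable, as you note yourself. Hence your assertion that the link $L$ is ``inductively a totally nonnegative Richardson-type object indexed by a smaller poset'' is exactly the unproved point. In particular, the total nonnegativity and the cone structure of the $\CI^-$-side factor remain unestablished.

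Supplying this is the main new content of \cite{BH24}: a product structure for totally positive Richardson varieties in Kac--Moody flag varieties, obtained without any longest element. Regularity then follows by an induction using the generalized Poincar\'e conjecture. Without such a theorem, your proposal yields at most the cell decomposition, not the regular CW complex, the ball property, or the identification of the face poset.

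\textbf{Smaller point: the flow.} Your contracting flow must use loop rotation, not merely ``possibly'' do so. Every cocharacter of $T$ pairs to zero with $\delta$, so it cannot be strictly positive on all of $\tilde\Delta$. Moreover, a torus flow fixes every $T$-fixed point $\dot y\CI$ with $w\leq y\leq w'$. It can therefore contract only a chart around one fixed point, not all of $\widetilde{\CB}_{w,w',\geq0}$ toward $\dot w'\CI$.
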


We next show that the isomorphism in Theorem
\ref{thm:technical} is compatible with total nonnegativity.

\begin{proposition}\label{prop:TNN-compatible}
Let $w,w',y\in W_{\aff}$ satisfy the assumptions of Theorem
\ref{thm:technical}. Let $\pi_{w,w',y}$ be the projection as in \eqref{eq:3.3}. For each $w\leq w_1\leq w_2\leq w'$, $\pi_{w,w',y}$ restricts to homeomorphisms
\begin{align*}
\widetilde{\CB}_{w_1y,w_2y,>0}
\xrightarrow{\sim}
\widetilde{\CB}_{w_1,w_2,>0},\quad
\widetilde{\CB}_{w_1y,w_2y,\geq 0}
\xrightarrow{\sim}
\widetilde{\CB}_{w_1,w_2,\geq 0}.
\end{align*}
\end{proposition}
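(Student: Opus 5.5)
Recall that $\pi_{w,w',y}$ sends $g\dot y\CI$ to $g\CI$ for $g\in\bigsqcup_{w''\in[w,w']}\CI\dot w''\CI$; by Theorem~\ref{thm:technical} it is an isomorphism $\widetilde{\CB}_{wy,w'y}\xrightarrow{\sim}\widetilde{\CB}_{w,w'}$ restricting to isomorphisms of open strata. Taking $\BR$-points, it is a homeomorphism $\widetilde{\CB}_{wy,w'y}(\BR)\to\widetilde{\CB}_{w,w'}(\BR)$ for the Hausdorff topology, sending $\widetilde{\CB}^\circ_{w_1y,w_2y}(\BR)$ onto $\widetilde{\CB}^\circ_{w_1,w_2}(\BR)$. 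It therefore suffices to prove the first homeomorphism, i.e.\ that $\pi_{w,w',y}$ maps $\widetilde{\CB}_{w_1y,w_2y,>0}$ onto $\widetilde{\CB}_{w_1,w_2,>0}$. The second then follows at once: the restriction of a homeomorphism carries closures to closures, and $\widetilde{\CB}_{w_1y,w_2y}$ is closed in $\widetilde{\CB}_{wy,w'y}$, so closures computed in the ambient affine flag variety agree with closures computed in the Richardson variety.

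To identify the positive parts, I will use the Marsh--Rietsch description of positive cells, valid in the Kac--Moody setting as used in \cite{BH24}. Fix a reduced expression $\ud{w_2}$, and let $\ud{x}_+$ be the positive distinguished subexpression for $w_1$ in $\ud{w_2}$. Then $\widetilde{\CB}_{w_1,w_2,>0}$ is the subset of the Deodhar cell $\CD_{\ud x_+,\ud{w_2}}$ in which all parameters satisfy $p_j>0$; for a positive distinguished subexpression $J^-_{\ud x_+}=\emptyset$, so there are no $m_j$. This is the analogue of \cite[Theorem 11.3]{MR04}. For the expression $\ud{w_2}\ud y$, the positive distinguished subexpression for $w_1y$ is exactly $\ud x_+$ followed by the forced tail $w_1y_1,\dots,w_1y_r$ of Lemma~\ref{lem:technical}. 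That tail is increasing, so it remains positive, and by uniqueness it is the positive distinguished subexpression. The tail indices lie in $J^+$ and contribute the factors $\dot s_{i_j}$, whose product is $\dot y$.

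Consequently, a point of $\widetilde{\CB}_{w_1y,w_2y,>0}$ has the form $g_1\cdots g_N\dot y\CI$ with the $g_j$ as for $\ud x_+$ and all $p_j>0$. Its image under $\pi_{w,w',y}$ is $g_1\cdots g_N\CI$, which is exactly the same parametrized point of $\widetilde{\CB}_{w_1,w_2,>0}$. The map is therefore the identity in these coordinates, and hence a bijection of positive parts; combined with the first paragraph, it is a homeomorphism.

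The main obstacle is justifying the Marsh--Rietsch positivity description in the affine setting, in particular that $\widetilde{\CB}_{w,w',>0}$, defined via the closure of $\widetilde U^-_{\ge0}\CI/\CI$, coincides with the positive part of the positive-distinguished Deodhar cell. I would cite \cite{BH24}, where this parametrization underlies the product structure, or reprove it by adapting \cite[\S11]{MR04} using the identification $\widetilde{\CB}\cong\widehat G/\widehat B^+$. If only one containment is readily available, I would argue instead via connected components: by Theorem~\ref{thm:affine-TNN} both positive parts are cells of the same dimension. The positive Deodhar chart is open in the real open Richardson variety and contained in the positive part. The homeomorphism $\pi_{w,w',y}$ matches these charts, and therefore matches the positive parts, which are connected components.
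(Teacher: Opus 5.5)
Your proof is correct and follows the paper's route. The paper's proof is one line: the statement follows from the proof of Theorem~\ref{thm:technical} (the Deodhar cells and the truncation bijection of Lemma~\ref{lem:technical}) together with the parametrization of positive Richardson cells by positive distinguished subexpressions. You fill in the details, namely that the positive subexpression for $w_1y$ is $\ud x_+$ followed by the forced tail, and that closures are carried over by the ambient homeomorphism.

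The Kac--Moody analogue of Marsh--Rietsch that you flag as the main obstacle is available as \cite[Theorem 4.10]{BH21}, which is what the paper cites. So your connected-component fallback, which is only sketched, is not needed.
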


\begin{proof}
The statement follows from the proof of Theorem \ref{thm:technical} and the parameterization of Richardson cells via positive distinguished subexpressions (cf. \cite[Theorem 4.10]{BH21}).
\end{proof}

Let $u,v\in W_0$. Choose $x\in W_0$ such that $u\leq_x v$ and
choose a sufficiently dominant regular cocharacter $\mu$. Define
\begin{align*}
\CT_{u,v}^{\mu,x,>0}=\widetilde{\CB}_{w_{\mu,x}(u),w_{\mu,x}(v),>0}, \quad \CT_{u,v}^{\mu,x,\geq 0}=\widetilde{\CB}_{w_{\mu,x}(u),w_{\mu,x}(v),\geq 0}.
\end{align*}
The isomorphism constructed in Proposition \ref{prop:well-def} is
a composition of the isomorphisms in Theorem \ref{thm:technical}
and their inverses. Hence, Proposition \ref{prop:TNN-compatible}
shows that it restricts to a cell-preserving homeomorphism between
the corresponding totally nonnegative parts. We therefore write
$\CT_{u,v}^{\mathrm{abs},>0}$ and $\CT_{u,v}^{\mathrm{abs},\geq 0}$ for their homeomorphism
types.

Combining Proposition \ref{prop:realization} and Theorem \ref{thm:affine-TNN}, we obtain the following.
\begin{theorem}\label{thm:TNN}
Let $u,v \in W_0$. Let $x \in W_0$ be such that $u \le_x v$ and let $\mu$ be a sufficiently dominant regular cocharacter. We have
\begin{align*}
\CT_{u,v}^{\mu,x,\geq 0}
=
\bigsqcup_{[u',v']_{\tilt}\subseteq [u,v]_{\tilt}}
\CT_{u',v'}^{\mu,x,>0}.
\end{align*}
Each cell $\CT_{u',v'}^{\mu,x,>0}$ is homeomorphic to
$\BR_{>0}^{\ell(u',v')}$. This decomposition is a regular
CW-complex homeomorphic to a closed ball of dimension $\ell(u,v)$.
Its face poset is canonically isomorphic to the poset of subintervals
of $[u,v]_{\tilt}$.
\end{theorem}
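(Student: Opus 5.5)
The plan is to transport Theorem~\ref{thm:affine-TNN} through the poset isomorphism of Proposition~\ref{prop:realization}. Set $w=w_{\mu,x}(u)$ and $w'=w_{\mu,x}(v)$. By Definition~\ref{def:tilted-richardson} and the definitions just given, $\CT_{u,v}^{\mu,x,\geq0}=\widetilde{\CB}_{w,w',\geq0}$. Theorem~\ref{thm:affine-TNN} then gives the decomposition
\[
\widetilde{\CB}_{w,w',\geq 0}=\bigsqcup_{w\le w_1\le w_2\le w'}\widetilde{\CB}_{w_1,w_2,>0},
\]
and tells us that this is a regular CW complex homeomorphic to a closed ball of dimension $\ell(w')-\ell(w)$, whose face poset is the poset of subintervals of $[w,w']$.

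Next I reindex. Proposition~\ref{prop:realization} identifies $([u,v]_{\tilt},\le_{u,v})$ with $([w,w'],\le)$ via $\Psi_{\mu,x}$. Hence every pair $w\le w_1\le w_2\le w'$ has the form $w_1=w_{\mu,x}(u')$, $w_2=w_{\mu,x}(v')$ for a unique pair $u'\le_{u,v} v'$ in $[u,v]_{\tilt}$, which by definition means $[u',v']_{\tilt}\subseteq[u,v]_{\tilt}$.

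To match the cell labels with $\CT^{\mu,x,>0}_{u',v'}$, I need $u'\le_x v'$ and also need that $\Psi_{\mu,x}$ restricted to $[u',v']_{\tilt}$ is the realization used to define $\CT^{\mu,x}_{u',v'}$. Both come from the fact that $\le_{u,v}$ is the restriction of $\le_x$ to $[u,v]_{\tilt}$, which is Lemma~\ref{lem:tilted-interval}(1). With this, the decomposition becomes
\[
\CT_{u,v}^{\mu,x,\geq0}=\bigsqcup_{[u',v']_{\tilt}\subseteq[u,v]_{\tilt}}\CT_{u',v'}^{\mu,x,>0}.
\]
Inclusion of subintervals corresponds exactly under this bijection, since inclusion $[w_1,w_2]\subseteq[w_3,w_4]$ is a statement about the order on $[w,w']$, which $\Psi_{\mu,x}$ preserves. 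So the face poset is the poset of tilted subintervals.

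Finally I check the dimensions. Each cell $\CT^{\mu,x,>0}_{u',v'}$ is homeomorphic to $\BR_{>0}^{\ell(w_2)-\ell(w_1)}$. By \eqref{eq:len2} and Lemma~\ref{lem:QBG2}(1), exactly as in Theorem~\ref{thm:tilted-Richardson}(3),
\[
\ell(w_{\mu,x}(v'))-\ell(w_{\mu,x}(u'))=\ell(u',v').
\]
The total dimension of the ball is then $\ell(u,v)$.

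The only point needing care is the compatibility in the middle step: confirming that the sub-realization restricted to $[u',v']_{\tilt}$ is literally the same map $\Psi_{\mu,x}$ with the same $x$ and $\mu$, so that no comparison isomorphism from Proposition~\ref{prop:well-def} is needed. Beyond that, the theorem is a direct transcription of Theorem~\ref{thm:affine-TNN}.
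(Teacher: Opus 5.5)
Your proposal is correct and takes the same approach as the paper, whose proof simply combines Proposition~\ref{prop:realization} with Theorem~\ref{thm:affine-TNN}. Your additional checks make explicit what the paper leaves implicit: that $u'\le_x v'$ by Lemma~\ref{lem:tilted-interval}(1), so the same $\Psi_{\mu,x}$ labels every cell, and the dimension count via \eqref{eq:len2} and Lemma~\ref{lem:QBG2}(1).
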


\section{Fundamental classes and quantum cohomology}\label{sec:quantum-cohomology}

Throughout this section, we take $k=\BC$.
We use ordinary singular homology and cohomology of the associated
analytic spaces. We write $\cdot$ for the usual cup product and $\star$ for the quantum product.

\subsection{Quantum cohomology and minimal degrees}\label{sec:quantum-minimal-degrees}

For $w\in W_0$, let $\sigma^w\in H^{2\ell(w)}(\CB,\BZ)$ be the
cohomology class of the opposite Schubert variety $\CB^w$.
These classes form a $\BZ$-basis of $H^*(\CB,\BZ)$.
They satisfy $\int_{\CB}\sigma^u\cdot\sigma^{w_0v}=\d_{u,v}$. Let $\o_i\in X^*(T)$, $i\in I$, be the fundamental weights,
characterized by $\<\a_j^\vee,\o_i\>=\d_{ij}$.
For $\l\in X^*(T)$, set
\begin{equation*}
D_\l=\sum_{i\in I}\<\a_i^\vee,\l\>\sigma^{s_i}.
\end{equation*}
In particular, $D_{\o_i}=\sigma^{s_i}$. We identify $H_2(\CB,\BZ)$ with $Q^\vee$ by
$[\CB_{s_i}]\mapsto\a_i^\vee$ for $i\in I$.
The effective degrees form the monoid $Q^\vee_{\geq0}=\sum_{i\in I}\BZ_{\geq0}\a_i^\vee$. Set $\BZ[q]=\BZ[q_i\mid i\in I]$.
Following \cite[\S2.2]{LS10}, for
$\nu=\sum_{i\in I}a_i\a_i^\vee\in Q^\vee_{\geq0}$, we write $q^\nu = \prod_{i\in I}q_i^{a_i}$.

The \emph{small quantum cohomology ring} of $\CB$ is the free
$\BZ[q]$-module $QH^*(\CB)=H^*(\CB,\BZ)\otimes_{\BZ}\BZ[q]$ with product
\begin{equation}\label{eq:qc-product}
\sigma^u\star\sigma^v
= \sum_{w\in W_0, \nu\in Q^\vee_{\geq0}}
c_{u,v}^{w,\nu}\,q^\nu\sigma^w.
\end{equation}
Here $c_{u,v}^{w,\nu}$ is the genus-zero three-point
Gromov--Witten invariant of degree $\nu$ with insertions
$\sigma^u,\sigma^v,\sigma^{w_0w}$.
These invariants are nonnegative integers, and
$c_{u,v}^{w,\nu}=0$ unless $\ell(u)+\ell(v)=\ell(w)+\<\nu,2\rho\>$. The product is associative and commutative, has unit $\sigma^1$,
and specializes to the cup product when $q_i=0$ for every $i\in I$ (cf. \cite[\S2.2]{LS10} and \cite[\S2]{Pos05}). The grading is defined such that $\sigma^w$ has degree $\ell(w)$ and $q^\nu$ has degree $\<\nu,2\rho\>$.

We extend the ordinary cup product $\BZ[q]$-bilinearly
to $QH^*(\CB)$ and interpret integration over $\CB$
coefficient-wise in $q$.
The symmetry of the three-point Gromov--Witten invariants gives
\begin{equation}\label{eq:qc-frobenius}
\int_{\CB}(A_1\star A_2)\cdot A_3
=
\int_{\CB}A_1\cdot(A_2\star A_3)
\end{equation}
for all $A_1,A_2,A_3\in QH^*(\CB)$.

For $A\in QH^*(\CB)$ and $\nu\in Q^\vee_{\geq0}$, we denote by
$[q^\nu]A$ its $H^*(\CB,\BZ)$-valued coefficient of $q^\nu$,
and by $[q^\nu\sigma^w]A$ the $\BZ$-valued coefficient of $q^\nu\sigma^w$.

For $z\in W_0$ and $\l\in X^*(T)$, the quantum Chevalley formula is
\begin{equation}\label{eq:qc-chevalley}
\sigma^z\star D_\l
=
\sum_{z\xrightarrow{\b}z'\text{ in }\QBG(\Phi)}
\<\b^\vee,\l\>\,q^{\wt_{\QBG}(z,z')}\sigma^{z'}
\end{equation}
(cf. \cite[Theorem~2.1]{LS10}).

We now recall the minimal-degree theorem of Postnikov
\cite[Theorem~2 and Corollary~3]{Pos05}.
\begin{theorem}[Postnikov]\label{thm:minimal-quantum-degree}
For any $u,v\in W_0$, every monomial occurring in
$\sigma^u\star\sigma^{w_0v}$ is divisible by $q^{\wt(u,v)}$.
\end{theorem}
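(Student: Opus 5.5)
We would prove the statement by a degeneration argument: reduce it to a statement about chains of $T$-invariant curves, and then convert such a chain into a path in $\TBG(\Phi)$ whose weight bounds the degree from below. The only inputs needed from earlier in the paper are the edge bound \eqref{eq:TBG-edge-bound}, the triangle inequality of Lemma~\ref{lem:QBG2}(3), and the characterization in Lemma~\ref{lem:QBG2}(2) of $\wt=0$ as the Bruhat order.

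\emph{Step 1: from a nonzero coefficient to a curve.} Suppose $c_{u,w_0v}^{w,\nu}\neq 0$. This is the Gromov--Witten invariant $\langle\sigma^u,\sigma^{w_0v},\sigma^{w_0w}\rangle_\nu$. The class $\sigma^{w_0v}$ is represented by $\CB^{w_0v}=\dot w_0\CB_v$, a translate of the Schubert variety $\CB_v$. Nonvanishing of the invariant therefore gives, for general $g,g'\in G$, a stable map of degree $\nu$ whose image is a connected curve meeting $g\CB^u$ and $g'\CB_v$. The moduli space of stable maps is proper, and meeting a closed set is a closed condition. Specializing $g,g'\to 1$, we obtain a degree-$\nu$ stable map whose image meets both $\CB^u$ and $\CB_v$.

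\emph{Step 2: degeneration to $T$-invariant curves.} Consider the locus $M$ of degree-$\nu$ genus-zero stable maps whose image meets both $\CB^u$ and $\CB_v$.
\begin{itemize}
\item $M$ is closed in the projective moduli space, hence projective.
\item $M$ is $T$-stable, since both Schubert varieties are $T$-stable.
\item $M$ is nonempty by Step 1.
\end{itemize}
By Borel's fixed point theorem, $M$ has a $T$-fixed point. Its image $C$ is a connected union of $T$-invariant curves, with total degree $\nu$ counted with multiplicity. The set $C\cap\CB^u$ is closed and $T$-stable, so it contains a $T$-fixed point $\dot z_0B$ with $u\le z_0$. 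Similarly, $C\cap\CB_v$ contains a $T$-fixed point $\dot z_rB$ with $z_r\le v$. Connectedness of $C$ gives a chain of $T$-invariant curves joining $z_0$ to $z_r$. Each such curve joins $z$ to $zs_\beta$ for some $\beta\in\Phi^+$ and has class a positive multiple of $\beta^\vee$. Hence we get $z_0,z_1,\dots,z_r$ with $z_j=z_{j-1}s_{\beta_j}$ and $\sum_j\beta_j^\vee\le\nu$ in dominance order.

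\emph{Step 3: the weight estimate.} By \eqref{eq:TBG-edge-bound}, $\wt(z,zs_\beta)\le\beta^\vee$ for every $z$ and $\beta$. This bound holds in both directions along an edge, because $zs_\beta\cdot s_\beta=z$ with the same $\beta$; so the orientation of the curves in the chain does not matter. Lemma~\ref{lem:QBG2}(2) gives $\wt(u,z_0)=0$ and $\wt(z_r,v)=0$. Applying the triangle inequality Lemma~\ref{lem:QBG2}(3) repeatedly yields
\[
\wt(u,v)\le \wt(u,z_0)+\sum_{j=1}^r\wt(z_{j-1},z_j)+\wt(z_r,v)\le\sum_{j=1}^r\beta_j^\vee\le\nu .
\]
Thus every $q^\nu$ that appears in $\sigma^u\star\sigma^{w_0v}$ satisfies $\nu-\wt(u,v)\in Q^\vee_{\ge0}$, i.e. the monomial is divisible by $q^{\wt(u,v)}$.

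The main obstacle is Step 1: passing rigorously from a nonzero Gromov--Witten invariant to an actual curve meeting the untranslated Schubert varieties. This needs the enumerative interpretation on the homogeneous space $\CB$, via Kleiman transversality for general translates, followed by the properness/specialization argument.

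An alternative avoiding geometry is an induction on $\ell(w_0v)$. One writes $\sigma^{w_0v}$ through the quantum Chevalley formula \eqref{eq:qc-chevalley} and uses the fact that all quantum-edge weights are controlled by $\wt_{\QBG}$. However, expressing Schubert classes as polynomials in the divisors $D_\lambda$ introduces denominators and cancellations. For that reason we would first try the geometric route.
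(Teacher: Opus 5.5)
Your argument is correct. The paper gives no proof of this statement; it quotes it from \cite{Pos05}. So your degeneration argument is an independent route: you reduce to a chain of $T$-stable curves from some $z_0\ge u$ to some $z_r\le v$, in the spirit of Fulton and Woodward, and then bound its weight using \eqref{eq:TBG-edge-bound} and Lemma~\ref{lem:QBG2}(2),(3).

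Two small points need tidying. First, handle $\nu=0$ separately, or use $3$-pointed maps; this case is just the classical implication $\CB^u\cap\CB_v\neq\emptyset\Rightarrow u\le v$. Second, take the chain without repeated components; this is what justifies $\sum_j\beta_j^\vee\le\nu$.

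Your ``main obstacle'' in Step 1 also disappears. The class $\sigma^u\times\sigma^{w_0v}$ is the class of $\CB^u\times\CB_v$ and is supported on that subvariety. Its pullback to $\overline{M}_{0,3}(\CB,\nu)$ under evaluation at the first two marked points is nonzero. Hence the image of that evaluation map must already meet $\CB^u\times\CB_v$, and no general translates or specialization are needed.

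What your route buys is a stronger, curve-theoretic statement: every genus-zero stable map of degree $\nu$ whose image meets both $\CB^u$ and $\CB_v$ has $\nu\ge\wt(u,v)$. The cost is the stable-map foundations and the classification of $T$-stable curves in $\CB$.

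The algebraic route you set aside in fact works, and it is shorter. It uses only tools the paper already employs in the proof of Theorem~\ref{thm:quantum-class}. Denominators and cancellations are harmless, because what you propagate is a support condition through a $\BQ[q]$-submodule.

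Let $M_u=\bigoplus_{z\in W_0}q^{\wt(u,z)}\BQ[q]\,\sigma^z$.
\begin{itemize}
\item By \eqref{eq:qc-chevalley} and the triangle inequality of Lemma~\ref{lem:QBG2}(3), $M_u\star D_\lambda\subseteq M_u$.
\item $QH^*(\CB)\otimes\BQ$ is generated over $\BQ[q]$ by divisor classes (Borel's theorem plus graded Nakayama). Hence $M_u$ is stable under quantum multiplication by every class.
\item $\sigma^u\in M_u$, so $[\sigma^v](\sigma^u\star\sigma^{w_0w})\in q^{\wt(u,v)}\BQ[q]$ for every $w$.
\item By \eqref{eq:qc-frobenius} and $\int_{\CB}\sigma^z\cdot\sigma^{w_0v}=\delta_{z,v}$, one has $[q^\nu\sigma^w](\sigma^u\star\sigma^{w_0v})=[q^\nu\sigma^v](\sigma^u\star\sigma^{w_0w})$.
\end{itemize}
No cancellation can move an element out of a submodule, so this proves the divisibility directly, without any moduli-space geometry.
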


\subsection{The evaluation map and the fundamental-class formula}

Choose the maximal compact subgroup $K\subset G$ associated with the
fixed pinning, and put $T_c=K\cap T$. We identify $K/T_c$ with $\CB^{\mathrm{an}}$ naturally. Set
$L_{\mathrm{alg}}K
=\{g\in G(\BC[\e,\e^{-1}])\mid g(S^1)\subset K\}.$
The loop-group Iwasawa decomposition \cite[Theorem~2.1]{LM11} gives a
$T_c$-equivariant homeomorphism
\[
\iota:L_{\mathrm{alg}}K/T_c
\xrightarrow{\sim}\widetilde{\CB}^{\mathrm{an}},
\qquad gT_c\longmapsto g\CI.
\]
Using this realization, define the continuous evaluation map
\[
\operatorname{ev}_1:\widetilde{\CB}^{\mathrm{an}}\longrightarrow K/T_c,
\qquad \operatorname{ev}_1(g\CI)=g(1)T_c
\quad(g\in L_{\mathrm{alg}}K).
\]
Here $T_c$ is regarded as the subgroup of constant loops, so the
formula is independent of the chosen compact-loop representative.

For $x\in W_0$, choose a representative $\dot x\in N_K(T_c)$
and set $R_x(gT_c)=g\dot xT_c$.
This map is well-defined and independent of the representative.
Let $u,v\in W_0$ with $u\leq_xv$, and let $\mu$ be a sufficiently
dominant regular cocharacter.
Recall that $w_{\mu,x}(z)=zt^{-(\mu+\wt(x,z))}x^{-1}$ for $z\in W_0$ (cf. \eqref{eq:psi}). Define
\begin{equation}\label{eq:qc-evaluation}
e_x=
\left.(R_x\circ\operatorname{ev}_1)\right|_{(\CT_{u,v}^{\mu,x})^{\mathrm{an}}}:
(\CT_{u,v}^{\mu,x})^{\mathrm{an}}
\longrightarrow\CB^{\mathrm{an}}.
\end{equation}
For a cocharacter $\l\in Q^\vee$, the loop representing $t^\l$ sends $1 \in S^1$ to the identity of $T$. Thus, $e_x\bigl(\dot{w}_{\mu,x}(z)\CI\bigr)=\dot zT_c$ for $z\in[u,v]_{\tilt}$ (cf. \cite[Lemma~3.6]{LLS22}).

The main result of this section is the following.
\begin{theorem}\label{thm:quantum-class}
Let $u,v\in W_0$. Let $x \in W_0$ be such that $u\leq_x v$ and let $\mu$ be a sufficiently
dominant regular cocharacter.
Then the following equality holds in
$H_{2\ell(u,v)}(\CB,\BZ)$:
\begin{equation}\label{eq:qc-class}
(e_x)_*[\CT_{u,v}^{\mu,x}]
=
\operatorname{PD}\left(
[q^{\wt(u,v)}]
\bigl(\sigma^u\star\sigma^{w_0v}\bigr)
\right).
\end{equation}
Here, we write $\operatorname{PD}$ for Poincar\'e duality
from cohomology to homology on $\CB$.
In particular, the class $(e_x)_*[\CT_{u,v}^{\mu,x}]$ is independent of $\mu$ and $x$.
\end{theorem}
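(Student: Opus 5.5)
Both sides of \eqref{eq:qc-class} lie in $H_{2\ell(u,v)}(\CB,\BZ)$, which is free with basis the Schubert classes. So the plan is to pair both sides with $\sigma^{w_0 z}$ for $z\in W_0$ with $\ell(z)=\ell(u,v)$ and show the resulting integers agree. On the right side, \eqref{eq:qc-frobenius} and duality give
\[
\int_{\CB}[q^{\wt(u,v)}](\sigma^u\star\sigma^{w_0v})\cdot \sigma^{w_0z}=[q^{\wt(u,v)}]\,(\text{coefficient of }\sigma^{z}\text{-dual})=c_{u,w_0v}^{w_0 z\,\text{-dual},\wt(u,v)}.
\]
Rather than compute this directly, I would argue by induction on $\ell(u,v)$ via a Chevalley-type recursion satisfied by both sides.

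\textbf{Right-hand side recursion.} Fix $\l$ dominant regular, for instance $\l=\rho$. By associativity and \eqref{eq:qc-chevalley},
\[
\sigma^u\star\sigma^{w_0v}\star D_\l=\sum_{u\xrightarrow{\b}u'}\<\b^\vee,\l\>q^{\wt(u,u')}\,\sigma^{u'}\star\sigma^{w_0v}.
\]
Take the coefficient of $q^{\wt(u,v)}$. By Theorem~\ref{thm:minimal-quantum-degree} and Lemma~\ref{lem:QBG2}(3), only edges with $\wt(u,u')+\wt(u',v)=\wt(u,v)$ survive, that is, edges into $[u,v]_{\tilt}$; these are exactly the first steps of shortest paths. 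The left side, using divisibility again, becomes $[q^{\wt(u,v)}](\sigma^u\star\sigma^{w_0v})\cdot D_\l$ (the classical cup product), since any higher-degree term would need degree below $\wt(u,v)$. This gives a recursion in which $\cdot D_\l$ lowers homological degree by two, matching divisor intersection.

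\textbf{Left-hand side recursion.} On the affine side, the Chevalley formula in the affine flag variety expresses $[\widetilde{\CB}_{w,w'}]$ intersected with the affine divisor class $\widetilde D_{\tilde\l}$ as $\sum\<\cdot\>[\widetilde{\CB}_{ws_{\tg},w'}]$ over covers $w\lessdot ws_{\tg}$. By Proposition~\ref{prop:realization} and Proposition~\ref{prop:path-corr}(1), the covers of $w_{\mu,x}(u)$ inside the interval are $w_{\mu,x}(u')$ for QBG edges $u\to u'$ with $u'\in[u,v]_{\tilt}$, with label $\tg=x(\b)+m\d$. I would then choose the affine weight so that $e_x^*D_\l$ restricts to $\widetilde D$ on $\CT^{\mu,x}_{u,v}$. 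Concretely, $R_x$ twists by $x$ and the level-zero part of $x(\l)$ pairs with $x(\b)$ to give $\<\b^\vee,\l\>$. The task is to check that the affine coefficient $\<\tg^\vee,\tilde\l\>$ equals $\<\b^\vee,\l\>$ when $\tilde\l$ is the level-zero weight $x(\l)$, and that $\operatorname{ev}_1^*$ of the finite line bundle $L_{x\l}$ is the level-zero affine line bundle. The projection formula then gives $(e_x)_*([\CT]\cap \widetilde D)=(e_x)_*[\CT]\cap D_\l$, so both sides satisfy the same recursion.

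\textbf{Closing the induction.} The base case $u=v$ is immediate: both sides are the point class, since $e_x$ sends the point to $\dot u T_c$. The recursion shows that $(\text{LHS}-\text{RHS})\cap D_\l$ vanishes in $H_{2\ell(u,v)-2}$ for every dominant regular $\l$. However, $\cap D_\l$ is not injective in top degree. To fix this, I would instead pair with $\sigma^{w_0z}$ directly and run the recursion from the other end using the opposite Chevalley formula (descents at $v$, type III/IV realizations), or use induction on both $u$ and $v$ together with the hard-Lefschetz-type fact that the classes $D_{\l}^{\,n}$ for varying $\l$ detect degree-$n$ classes rationally. Integrality follows because $H_*(\CB,\BZ)$ is torsion-free.

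\textbf{Main obstacle.} I expect the hardest step to be the geometric identification $e_x^*D_\l=\widetilde D_{x\l}$ at level zero. The evaluation map is only continuous, not algebraic, and level-zero line bundles on $\widetilde{\CB}$ are subtle. I would handle this by computing Chern classes via $T_c$-equivariant localization at the fixed points $\dot w\CI$, where $e_x$ is explicit by \cite[Lemma~3.6]{LLS22}.
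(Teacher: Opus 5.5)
Your proposal is correct and is essentially the paper's argument. The paper also pulls back divisors via $e_x^*D_\l=D^{\aff}_{x(\l)}|_{\CT_{u,v}^{\mu,x}}$, matches the affine Chevalley formula on $[w_{\mu,x}(u),w_{\mu,x}(v)]$ with the quantum Chevalley formula through the bijection of Proposition~\ref{prop:path-corr} (using $\langle\tilde\gamma^\vee,\operatorname{af}(x(\l))\rangle=\langle\beta^\vee,\l\rangle$), and discards the other $q$-degrees with Postnikov's theorem; it does this in one stroke, expanding both sides paired with $F=D_{\l_1}\cdots D_{\l_{\ell(u,v)}}$, rather than by induction on $\ell(u,v)$.

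Your closing worry is unnecessary, and the ``other end'' recursion would not help anyway. Your recursion holds for every $\l\in X^*(T)$, and $H^{2n}(\CB,\BQ)$ for $n\geq 1$ is spanned by products $D_\l\cdot G$ with $G\in H^{2n-2}(\CB,\BQ)$. So the vanishing of $(\mathrm{LHS}-\mathrm{RHS})\cap D_\l$ for all $\l$ already forces equality, integrally by torsion-freeness, and this is exactly the reduction the paper makes at the outset. Also, the pairing in your first paragraph should be with $\sigma^{z}$ for $\ell(z)=\ell(u,v)$, not with $\sigma^{w_0z}$, which has the wrong degree.
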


We first recall the notation needed for the proof.

Put $\fkt=\Lie(T)$ and
$\widehat{\fkt}=\fkt\oplus\BC c \oplus \BC d_{\mathrm{rot}}$,
where $c=\ta_0^\vee+\th^\vee$ and
$d_{\mathrm{rot}}$ is the loop derivation. The affine fundamental weights $\L_i$, $i\in\tilde I$, are
defined by
$\<\ta_j^\vee,\L_i\>=\d_{ij}$ and
$\<d_{\mathrm{rot}},\L_i\>=0$ for $i,j\in\tilde I$.
For $\l\in X^*(T)$, let $\operatorname{af}(\l)$ be its extension
to $\widehat{\fkt}$ which vanishes on $c$ and $d_{\mathrm{rot}}$.
Since $\<c,\L_0\>=1$, we have
$\operatorname{af}(\o_i)=\L_i-\<c,\L_i\>\L_0$ for $i\in I$.

For $y\in W_{\aff}$, write
$\widetilde{\CB}_y=\overline{\CI\dot y\CI/\CI}$ for the affine
Schubert variety, and let $\widetilde\sigma^y$ be the affine
Schubert cohomology class dual to $[\widetilde{\CB}_y]$.
For an affine integral weight $\eta$, denote by
$\widetilde D_\eta$ its divisor class, normalized by
$\widetilde D_{\L_i}=\widetilde\sigma^{s_i}$.
Thus
$\widetilde D_\eta
=\sum_{i\in\tilde I}\<\ta_i^\vee,\eta\>\widetilde\sigma^{s_i}$.
Set $D_\l^{\aff}=\widetilde D_{\operatorname{af}(\l)}$.

By \cite[Lemma~3.7 and \S5.3]{LLS22}, applied to first Chern
classes, $\operatorname{ev}_1^*D_\l=D_\l^{\aff}$.
Moreover, the identity
$gt\dot x=g\dot x(\dot x^{-1}t\dot x)$ for $t\in T_c$
shows that $R_x^*D_\l=D_{x(\l)}$.
Consequently,
\begin{equation}\label{eq:qc-divisor-pullback}
e_x^*D_\l
=
\left.D_{x(\l)}^{\aff}\right|_{\CT_{u,v}^{\mu,x}}.
\end{equation}

Let $z\xrightarrow{\b}zs_\b$ be an edge in $\QBG(\Phi)$ such that $z\le_x zs_\b$. By Proposition \ref{prop:path-corr}, it corresponds to an upward Bruhat edge $w_{\mu,x}(z) \xrightarrow{\tg} w_{\mu,x}(zs_{\b})$, where $\tg=x(\b)+\left(
\d_{\Phi^-}(z(\b))
+\<\mu+\wt(x,z),\b\>
\right)\d\in\Phi_{\aff}^+$. In this case, we have
\begin{equation}\label{eq:qc-root-pairing}
\<\tg^\vee,\operatorname{af}(x(\l))\>
=
\<x(\b^\vee),x(\l)\>
=
\<\b^\vee,\l\>.
\end{equation}

\begin{proof}[Proof of Theorem \ref{thm:quantum-class}]
Note that $H^*(\CB,\BQ)$ is generated by divisor classes and
$H_*(\CB,\BZ)$ is torsion-free. Let $\l_1,\ldots,\l_{\ell(u,v)}\in X^*(T)$, and set
$F=D_{\l_1}\cdot\ldots\cdot D_{\l_{\ell(u,v)}}$. It suffices to prove 
\begin{equation}\label{eq:anyF}
\int_{\CT_{u,v}^{\mu,x}}e_x^*F = \int_{\CB}F\cdot
  [q^{\wt(u,v)}](\sigma^u\star\sigma^{w_0v}).
\end{equation}

Set $P=D_{\l_1}\star\ldots\star D_{\l_{\ell(u,v)}}$ and write $P=F+\sum_{\xi>0}q^\xi F_\xi$,
where $F_\xi\in H^*(\CB,\BZ)$ and
$\xi>0$ means $\xi\in Q^\vee_{\geq0}\setminus\{0\}$.
By Theorem~\ref{thm:minimal-quantum-degree}, every degree occurring
in $\sigma^u\star\sigma^{w_0v}$ is at least $\wt(u,v)$.
Thus the terms $q^\xi F_\xi$ with $\xi>0$ do not contribute
to the coefficient of $q^{\wt(u,v)}$ below.
Using \eqref{eq:qc-frobenius}, we obtain
\begin{align*}
\int_{\CB}F\cdot
 [q^{\wt(u,v)}](\sigma^u\star\sigma^{w_0v})
&=[q^{\wt(u,v)}]\int_{\CB}
 P\cdot(\sigma^u\star\sigma^{w_0v})\\
&=[q^{\wt(u,v)}]\int_{\CB}
 (\sigma^u\star P)\cdot\sigma^{w_0v}\\
&=[q^{\wt(u,v)}\sigma^v](\sigma^u\star P).
\end{align*}
Hence, it suffices to prove that
\begin{equation}\label{eq:anyF2}
\int_{\CT_{u,v}^{\mu,x}}e_x^*F = [q^{\wt(u,v)}\sigma^v](\sigma^u\star P).
\end{equation}

By \eqref{eq:qc-divisor-pullback} and the affine
Chevalley formula, we have
\[
\int_{\CT_{u,v}^{\mu,x}}e_x^*F
=
\sum_{\widetilde p}
\prod_{j=1}^{\ell(u,v)}
\<\tg_j^\vee,\operatorname{af}(x(\l_j))\>,
\]
where $\widetilde p$ runs over all maximal Bruhat chains $w_{\mu,x}(u)=y_0
\xrightarrow{\tg_1}y_1
\xrightarrow{\tg_2}\cdots
\xrightarrow{\tg_{\ell(u,v)}}y_{\ell(u,v)}
=w_{\mu,x}(v)$.

On the other hand, by the quantum Chevalley formula \eqref{eq:qc-chevalley}, we get
\[
[q^{\wt(u,v)}\sigma^v](\sigma^u\star P)
=
\sum_p
\prod_{j=1}^{\ell(u,v)}\<\b_j^\vee,\l_j\>,
\]
where $p$ runs over all shortest paths $u
\xrightarrow{\b_1}z_1
\xrightarrow{\b_2}\cdots
\xrightarrow{\b_{\ell(u,v)}}z_{\ell(u,v)}=v$
in $\QBG(\Phi)$.

By \eqref{eq:qc-root-pairing}, we have $\<\tg_j^\vee,\operatorname{af}(x(\l_j))\>
=\<\b_j^\vee,\l_j\>$ for every $j$. By Proposition~\ref{prop:path-corr}, the chains $\widetilde p$ and the paths $p$ are in one-to-one correspondence. Thus the two sums agree, proving
\eqref{eq:anyF2}.
\end{proof}

\subsection{Realizing tilted Richardson varieties in $G/B$}
\label{sec:realization-in-flag}

Let $u,v\in W_0$. Choose $x\in W_0$ such that $u\leq_xv$
and a sufficiently dominant regular cocharacter $\mu$.
Define
$$\CT_{u,v}^{\mu,x,\CB} = e_x(\CT_{u,v}^{\mu,x})\subseteq\CB.$$
Note that this image is compact in the complex analytic
topology. Theorem \ref{thm:quantum-class} shows that
$(e_x)_*[\CT_{u,v}^{\mu,x}]$ is independent of $\mu$ and $x$.

\begin{conjecture}\label{conj:realization-in-flag}
\begin{enumerate}
\item The subset $\CT_{u,v}^{\mu,x,\CB}$ is
independent of the choices of $\mu$ and $x$. We denote the common image by $\CT_{u,v}^{\CB}\subseteq\CB$.

\item $\CT_{u,v}^{\CB}\subseteq\CB$ is a closed algebraic subvariety of $\CB$.

\item In the case of type $A$, $\CT_{u,v}^{\CB}\subseteq\CB$ coincides with the tilted Richardson variety $\CT_{u,v}^{\mathrm{GGG}}$ defined in \cite{GGG26}.

\end{enumerate}
\end{conjecture}


\begin{thebibliography}{99}

\bibitem{BH21}
H.~Bao and X.~He.
\newblock {Flag manifolds over semifields}.
\newblock {\em Algebra Number Theory}, 15(8):2037--2069, 2021.

\bibitem{BH24}
H.~Bao and X.~He.
\newblock {Product structure and regularity theorem for totally nonnegative
flag varieties}.
\newblock {\em Invent. Math.}, 237(1):1--47, 2024.

\bibitem{Bjor84}
A.~Bj{\"o}rner.
\newblock {Posets, regular {CW} complexes and {Bruhat} order}.
\newblock {\em European J. Combin.}, 5(1):7--16, 1984.

\bibitem{BFP}
F.~Brenti, S.~Fomin, and A.~Postnikov.
\newblock {Mixed {Bruhat} operators and {Yang--Baxter} equations
for {Weyl} groups}.
\newblock {\em Int. Math. Res. Not. IMRN}, 1999(8):419--441, 1999.

\bibitem{Deod85}
V.~V.~Deodhar.
\newblock {On some geometric aspects of {Bruhat} orderings.
{I}. {A} finer decomposition of {Bruhat} cells}.
\newblock {\em Invent. Math.}, 79(3):499--511, 1985.

\bibitem{Dyer93}
M.~J.~Dyer.
\newblock {Hecke algebras and shellings of {Bruhat} intervals}.
\newblock {\em Compositio Math.}, 89(1):91--115, 1993.

\bibitem{FGP97}
S.~Fomin, S.~Gelfand, and A.~Postnikov.
\newblock {Quantum {Schubert} polynomials}.
\newblock {\em J. Amer. Math. Soc.}, 10(3):565--596, 1997.

\bibitem{GKL22}
P.~Galashin, S.~N.~Karp, and T.~Lam.
\newblock {Regularity theorem for totally nonnegative flag varieties}.
\newblock {\em J. Amer. Math. Soc.}, 35(2):513--579, 2022.

\bibitem{GGG26}
J.~Gao, S.~Gao, and Y.~Gao.
\newblock {Tilted {Richardson} varieties}.
\newblock Preprint, 2026.
\newblock \href{https://arxiv.org/abs/2602.05326}{arXiv:2602.05326}.

\bibitem{HR76}
M.~Hochster and J.~L.~Roberts.
\newblock {The purity of the {Frobenius} and local cohomology}.
\newblock {\em Adv. Math.}, 21(2):117--172, 1976.

\bibitem{KL79}
D.~Kazhdan and G.~Lusztig.
\newblock {Representations of {Coxeter} groups and {Hecke} algebras}.
\newblock {\em Invent. Math.}, 53(2):165--184, 1979.

\bibitem{Kum17}
S.~Kumar.
\newblock {Positivity in {$T$}-equivariant {$K$}-theory of flag varieties
associated to {Kac--Moody} groups
(with an appendix by M.~Kashiwara)}.
\newblock {\em J. Eur. Math. Soc.}, 19(8):2469--2519, 2017.

\bibitem{KS14}
S.~Kumar and K.~Schwede.
\newblock {{Richardson} varieties have {Kawamata} log terminal singularities}.
\newblock {\em Int. Math. Res. Not. IMRN}, 2014(3):842--864, 2014.

\bibitem{LLS22}
T.~Lam, S.~J.~Lee, and M.~Shimozono.
\newblock {On the coproduct in affine {Schubert} calculus}.
\newblock In {\em Facets of Algebraic Geometry}, Vol.~2,
volume 473 of {\em London Math. Soc. Lecture Note Ser.},
pages 115--146.
Cambridge University Press, Cambridge, 2022.

\bibitem{LS10}
T.~Lam and M.~Shimozono.
\newblock {Quantum cohomology of {$G/P$} and homology of affine
{Grassmannian}}.
\newblock {\em Acta Math.}, 204(1):49--90, 2010.

\bibitem{LM11}
P.~J.~Littig and S.~A.~Mitchell.
\newblock {Generating varieties for affine {Grassmannians}}.
\newblock {\em Trans. Amer. Math. Soc.}, 363(7):3717--3731, 2011.

\bibitem{Lusz94}
G.~Lusztig.
\newblock {Total positivity in reductive groups}.
\newblock In {\em Lie Theory and Geometry},
volume 123 of {\em Progr. Math.}, pages 531--568.
Birkh\"auser Boston, Boston, MA, 1994.

\bibitem{MR04}
B.~R.~Marsh and K.~Rietsch.
\newblock {Parametrizations of flag varieties}.
\newblock {\em Represent. Theory}, 8:212--242, 2004.

\bibitem{MV20}
E.~Mili\'{c}evi\'{c} and E.~Viehmann.
\newblock {Generic {Newton} points and the {Newton} poset
in {Iwahori}-double cosets}.
\newblock {\em Forum Math. Sigma}, 8, Paper No.~e50, 18~pp., 2020.

\bibitem{NW17}
S.~Naito and H.~Watanabe.
\newblock {A combinatorial formula expressing periodic {$R$}-polynomials}.
\newblock {\em J. Combin. Theory Ser. A}, 148:197--243, 2017.

\bibitem{PR08}
G.~Pappas and M.~Rapoport.
\newblock {Twisted loop groups and their affine flag varieties}.
\newblock {\em Adv. Math.}, 219(1):118--198, 2008.

\bibitem{Pos05}
A.~Postnikov.
\newblock {Quantum {Bruhat} graph and {Schubert} polynomials}.
\newblock {\em Proc. Amer. Math. Soc.}, 133(3):699--709, 2005.

\bibitem{Ric92}
R.~W.~Richardson.
\newblock {Intersections of double cosets in algebraic groups}.
\newblock {\em Indag. Math. (N.S.)}, 3(1):69--77, 1992.

\bibitem{Riet99}
K.~Rietsch.
\newblock {An algebraic cell decomposition of the nonnegative part
of a flag variety}.
\newblock {\em J. Algebra}, 213(1):144--154, 1999.

\bibitem{Riet06}
K.~Rietsch.
\newblock {Closure relations for totally nonnegative cells in {$G/P$}}.
\newblock {\em Math. Res. Lett.}, 13(5--6):775--786, 2006.

\bibitem{Sch24}
F.~Schremmer.
\newblock {Affine {Bruhat} order and {Demazure} products}.
\newblock {\em Forum Math. Sigma}, 12, Paper No.~e53, 56~pp., 2024.

\bibitem{Sch25a}
F.~Schremmer.
\newblock {Affine {Deligne--Lusztig} varieties via the double {Bruhat}
graph, {I}: {Semi-infinite} orbits}.
\newblock {\em Algebra Number Theory}, 19(10):1973--2014, 2025.

\bibitem{Sch23}
F.~Schremmer.
\newblock {Generic {Newton} points and cordial elements}.
\newblock {\em Adv. Math.}, 502, Part A, Paper No.~111120, 2026.

\bibitem{LW}
L.~K.~Williams.
\newblock {Shelling totally nonnegative flag varieties}.
\newblock {\em J. Reine Angew. Math.}, 609:1--21, 2007.

\end{thebibliography}
\end{document}